\definecolor{greenBleu}{rgb}{0.0, 0.6, 0.5}
\newtheorem{proposition}{Proposition}[section]
\newtheorem{theorem}{Theorem}[section]
\newtheorem{lemma}[proposition]{Lemma}
\newtheorem{remark}{Remark}[section]
\newtheorem{corollary}[theorem]{Corollary}
\numberwithin{equation}{section}
\newcommand{\R}{\mathbb{R}} 
\newcommand{\N}{\mathbb{N}} 
\newcommand{\Z}{\mathbb{Z}} 
\newcommand{\Td}{T_{\delta}}
\newcommand{\eps}{\varepsilon}
\newcommand{\pv}{\textrm{p.v.}}
\let\Re=\undefined\DeclareMathOperator{\Re}{Re}
\let\Im=\undefined\DeclareMathOperator{\Im}{Im}
\begin{document}
\keywords{Derivative nonlinear Schr\"odinger equation, integrable systems, intermediate NLS, Calogero-Moser, global well-posedness, modified energy method, Zhidkov space.}

\title[GWP for intermediate NLS with nonvanishing conditions at infinity]{Global well-posedness for intermediate NLS with nonvanishing conditions at infinity}

\author{Takafumi Akahori}
\address{Faculty of Engineering, 
 Shizuoka University, Jyohoku 3-5-1, Hamamatsu-shi, Shizuoka, 432-8561, Japan.}
\email{akahori.takafumi@shizuoka.ac.jp}

\author{Rana Badreddine}
\address{Department of Mathematics, University of California, Los Angeles, CA 90095, USA.}
\email{badreddine@math.ucla.edu}

\author{Slim Ibrahim}
\address{Department of Mathematics and Statistics, 
 University of Victoria, 3800 Finnerty Road, Victoria, BC V8P 5C2, Canada.}
\email{ibrahims@uvic.ca}

\author{Nobu Kishimoto}
\address{Research Institute for Mathematical Sciences, 
 Kyoto University, Oiwake-Cho, Kitashirakawa, Sakyo-ku, Kyoto-shi, Kyoto, 606-8502, Japan.}
\email{nobu@kurims.kyoto-u.ac.jp}

\begin{abstract}
The intermediate nonlinear Schr\"odinger equation (INLS) describes the dynamics of the envelope of weakly nonlinear internal waves in a stratified fluid of finite depth. While the INLS equation is known to admit dark soliton solutions, these solutions possess nonvanishing boundary conditions at spatial infinity and therefore fall outside the scope of existing well-posedness frameworks. This paper establishes the local and global well-posedness of a generalized INLS equation in Zhidkov-type spaces tailored to these nonvanishing boundary conditions. Furthermore, we rigorously justify the deep-water limit, proving that solutions of the generalized INLS converge to those of the generalized Calogero-Moser (CM) derivative NLS equation in Zhidkov-type spaces.

Our well-posedness theory relies on the modified energy method combined with frequency envelopes, marking the first application of these techniques to Zhidkov-type spaces.

\end{abstract}

\date{May 24, 2026}
\maketitle
\tableofcontents


\section{Introduction}
\label{sec:intro}

We study the intermediate nonlinear Schr\"odinger equation in one space dimension, 
\begin{equation*}\tag{INLS}\label{INLS}
i\partial_tu - \partial_x^2 u = u (i - \Td)\partial_x |u|^2 
\qquad 
\mbox{in}~\mathbb{R}\times \mathbb{R}, 
\end{equation*}
where $u$ is a complex-valued function, $\delta>0$ is a parameter called the total fluid depth, and $\Td$ is a singular integral operator defined by
\[ \Td f (x) := \frac{1}{2\delta}\lim_{\varepsilon \downarrow 0} \int_{|x-y|>\varepsilon} \coth{\Big( \frac{\pi (x-y)}{2\delta}\Big)}f(y)\,dy . \]
The equation \eqref{INLS} was introduced by Pelinovsky in \cite{P} to describe the long-term dynamics of the envelope of weakly nonlinear internal waves in a stratified fluid of finite depth.

A notable intrinsic property of \eqref{INLS} is that it admits \emph{dark solitons} (see \cite{P,PG,Matsuno1}). 
Specifically, for any background wave density $\rho>0$ and integer $N\geq 1$, there exists an $N$-soliton solution $Q$ to \eqref{INLS} satisfying the nonvanishing boundary condition $|Q(x,t)|\to \rho$ as $|x|\to \infty$ for all $t\in\R$, while its spatial derivative $\partial_x Q$ remains in $L^2(\mathbb{R})$. 
Although the existence and asymptotic properties of these dark solitons are well-documented (see \cite{Akahori}), determining local and global well-posedness in a functional setting appropriate for such solutions has remained an open problem. 
To date, the well-posedness literature for \eqref{INLS} has been restricted to spaces of decaying functions. 
For instance, local well-posedness was established in $H^s(\R)$ for $s\geq1$ by de\,Moura~\cite{DeMoura}, and later for small initial data in the Besov space $B_2^{1/2,1}(\R)$ by Barros--de\,Moura--Santos \cite{B-deM-S}. 
More recently, Chapouto--Forlano--Laurens \cite{CFL} showed that \eqref{INLS} is globally well-posed in $H^s(\R)$ for $s>1/4$ under a smallness assumption on the $L^2$ initial data. 
However, none of these functional frameworks can accommodate the nonvanishing boundary conditions intrinsic to dark solitons. 
The primary objective of this paper is to bridge this gap by establishing the global well-posedness of \eqref{INLS} in a Zhidkov-type space (see Theorems \ref{thm:LWP} and \ref{thm:GWP} below).

To properly address these nonvanishing boundary conditions, we first recall the definition of the Banach Zhidkov space: for any $k\in \N,$
\[ X^k:=\{ u\in L^\infty(\R):\partial_xu\in H^{k-1}(\R)\},\qquad \| u\|_{X^k}:=\| \partial_xu\|_{H^{k-1}}+\| u\|_{L^\infty}. \]
We then introduce the following spaces defined, for any $k\in \N$, $\rho\geq 0$ as
\begin{equation}\label{X}
\begin{gathered}
\mathcal{Z}^k_\rho:=\{ u\in L^{\infty}(\R) : \partial_xu\in H^{k-1}(\R)\,,|u|^2-\rho^2 \in L^2(\R) \}, \\
\mathcal{Z}^k:=\bigcup_{\rho\geq 0}\mathcal{Z}^k_\rho =\{ u\in L^\infty(\R) : \partial_xu\in H^{k-1}(\R)\,,|u|^2-\rho^2\in L^2(\R)~\text{for some $\rho\geq 0$}\} .
\end{gathered}
\end{equation}
These function spaces naturally accommodate dark solitons. 
Note that $\mathcal{Z}^k$ is not a vector space, but rather a complete metric space equipped with the distance function
\begin{equation}\label{def:dk}
d^k(f,g):=\| f-g\|_{X^k}+\big\| (|f|^2-\rho_f^2)-(|g|^2-\rho_g^2)\big\|_{L^2},
\end{equation}
where, for any $f\in \mathcal{Z}^k$, the constant $\rho_f$ denotes the unique non-negative number such that 
\[ |f|^2-\rho_f^2\in L^2(\R). \]
We also note that for each $\rho\geq 0$, the space $\mathcal{Z}^k_\rho$ is a closed subset of $\mathcal{Z}^k$, and that $H^k(\R) \hookrightarrow \mathcal{Z}^k_0 \hookrightarrow X^k$.
To measure the size of functions in $\mathcal{Z}^k$, we introduce the functionals
\begin{equation}\label{def:Ek}
\begin{aligned} 
\widetilde{\mathcal{E}} (f) &:=\big\| |f|^2-\rho_f^2 \big\|_{L^2} , \\ 
\mathcal{E}^k(f)&:= \| f\|_{X^k}^2+\widetilde{\mathcal{E}} (f), \\
\mathcal{E}^k_\delta (f) &:=\| f \|_{X^k}^2 + \tfrac{1}{\delta^{4/3}} \widetilde{\mathcal{E}}(f) \qquad (\delta>0).
\end{aligned} 
\end{equation}

While \eqref{INLS} is famously known as a completely integrable system \cite{PG,CFL} the analytical framework developed in this paper does not rely on these integrable structures. 
To underscore the robustness of our well-posedness theory, we extend our analysis to a broader class of models by introducing the following generalized equation
\begin{equation}\label{gINLS}\tag{gINLS}
i\partial_tu-\partial_x^2u=u(i\alpha -\beta T_\delta)\partial_x|u|^2 \qquad \text{in $\mathbb{R}\times \mathbb{R}$,}
\end{equation}
where $\alpha,\beta\in \R$ are constants. 
Note that \eqref{gINLS} recovers its completely integrable structure when $\beta =\pm |\alpha|\neq 0$, with the plus (resp. minus) sign corresponding to the defocusing (resp. focusing) nonlinearity%
\footnote{~The sign of $\alpha$ is not relevant. Indeed, if $\alpha \neq 0$ one can set $\alpha =1$ by the change of variables $u(x,t)\mapsto |\alpha |^{1/2}u((\text{sgn}\,{\alpha})x,t)$, by which the nonlinearity is transformed into $u(i-\beta |\alpha|^{-1}\Td)\partial_x|u|^2$.}%
. 
The original \eqref{INLS} is naturally recovered by setting $\alpha =\beta =1$.

\medskip
We are now ready to state our first main result, which establishes local well-posedness for \eqref{gINLS} in $\mathcal{Z}^2$.

\begin{theorem}[Local well-posedness in $\mathcal{Z}^2$]\label{thm:LWP}
Let $\alpha, \beta\in \R$, $\delta>0$ and $M_0>0$. 
For any initial data $\phi\in \mathcal{Z}^2$ satisfying $\mathcal{E}^2(\phi)\leq M_0^2$, there exists a time $T_*\in (0,1]$, bounded below by
\[ T_*\geq c_{\delta,\alpha,\beta}(1+M_0^4)^{-1} \]
(for some constant $c_{\delta,\alpha,\beta}>0$), such that \eqref{gINLS} admits a unique solution $u\in C([0,T_*];\mathcal{Z}^2)$ with $u(0)=\phi$. 
Furthermore, the solution satisfies the following properties:
\begin{enumerate}
\item[{\upshape (i)}] The mapping $\phi\mapsto u$ is continuous from the metric ball $\{ \phi\in \mathcal{Z}^2\colon\mathcal{E}^2(\phi)\leq M_0^2\}$ into $C([0,T_*];\mathcal{Z}^2)$.
\item[{\upshape (ii)}] The background wave density is preserved, such that $u\in C([0,T_*];\mathcal{Z}^2_{\rho_\phi})$.
\item[{\upshape (iii)}] If $\phi\in \mathcal{Z}^k$ for some integer $k\geq 3$, the regularity persists: $u\in C([0,T_*];\mathcal{Z}^k)$.
\end{enumerate}
Finally, if $\delta\geq 1$, the time of existence admits a uniform lower bound $T_*\geq c_{\alpha,\beta}(1+M_0^4)^{-1}$ independent of $\delta$, and the same well-posedness results hold for initial data satisfying the weaker bound $\mathcal{E}^2_\delta(\phi)\leq M_0^2$.
\end{theorem}

\begin{remark}\hfill
\begin{enumerate}
\item[{\upshape (1)}] 
The final statement of the theorem utilizes the weaker bound $\mathcal{E}^2_\delta(\phi) \leq M_0^2$ to show that for large depths ($\delta \geq 1$), the uniform time of existence $T_*$ survives even if the non-Zhidkov energy grows as large as $\widetilde{\mathcal{E}}(\phi) = O(\delta^{4/3})$. 
This naturally bridges the $\mathcal{Z}^2$ local theory to the deep-water limit, which is well-posed in the broader space $X^2$ where the non-Zhidkov part is entirely unconstrained (see Corollary~\ref{cor:LWP-CM}). 
The specific exponent $-4/3$ arises from the $L^\infty$ estimates on the nonlinearity \eqref{est:aprioriZ2-A}, though its optimality remains an open question.
\item[{\upshape (2)}] 
While \cite{Chen} claimed well-posedness for the deep-water limit using Kato's iterative scheme, this approach is fundamentally problematic for \eqref{INLS} or \eqref{gINLS} in spaces with nonvanishing boundaries. 
Kato's scheme requires expanding the nonlinearity, but individual terms like $T_\delta(\bar{u}\partial_x u)$ suffer from a severe low-frequency divergence because $\int \bar{u}\partial_x u \,dx \neq 0$ for non-decaying functions. 
We avoid this infrared blowup by employing a modified energy method \cite{shatah,colliander-kdv}, which robustly treats $\partial_x(|u|^2)$ as an unbroken exact derivative.
\end{enumerate}
\end{remark}

The uniform lower bound on the time of existence $T_*$ for large fluid depths ($\delta \geq 1$) naturally invites a rigorous investigation into the deep-water limit. 
As $\delta \to \infty$, the singular integral operator $T_\delta$ formally converges to the Hilbert transform $H$, defined by
\[ H f(x) := \lim_{\varepsilon \downarrow 0} \int_{|x-y|>\varepsilon} \frac{f(y)}{\pi (x-y)}\,dy ,\qquad \widehat{H f}(\xi)=-i\,\mathrm{sgn}(\xi) \hat{f}(\xi). \]
In this limit, the \eqref{gINLS} equation transitions into the generalized Calogero-Moser derivative nonlinear Schr\"odinger equation:
\begin{equation}\label{gCM}\tag{gCM}
i\partial_tu-\partial_x^2u=u(i\alpha -\beta H)\partial_x|u|^2\qquad \text{in $\mathbb{R}\times \mathbb{R}$}.
\end{equation}
Because the time of existence in Theorem~\ref{thm:LWP} is independent of $\delta$ for $\delta \ge 1$, our local theory seamlessly extends to the limiting case $\delta=\infty$. 
This yields local well-posedness for \eqref{gCM} in the broader space $X^2$, where the non-Zhidkov energy $\widetilde{\mathcal{E}}(\cdot)$ is unrestricted.

\begin{corollary}[Local well-posedness for \eqref{gCM}]\label{cor:LWP-CM}
Let $\alpha, \beta \in \mathbb{R}$ and $M_0>0$. 
For any initial data $\phi\in X^2$ satisfying $\| \phi\|_{X^2}\leq M_0$, there exists a time $T_*\in (0,1]$ satisfying $T_*\geq c_{\alpha,\beta}(1+M_0^4)^{-1}$ such that \eqref{gCM} admits a unique solution $u\in C([0,T_*];X^2)$ with $u(0)=\phi$.
The solution depends continuously on the initial data and persists higher regularities.
Furthermore, if $\phi\in \mathcal{Z}^2$, the solution $u$ remains in $C([0,T_*];\mathcal{Z}^2_{\rho_\phi})$ and the mapping $\phi\mapsto u$ is continuous in the $\mathcal{Z}^2$ topology.
\end{corollary}

Beyond the independent well-posedness of both systems, the following result establishes the rigorous convergence of \eqref{gINLS} to \eqref{gCM} in Zhidkov-type spaces as the depth parameter  $\delta\to \infty$. 

\begin{theorem}[Deep-water limit convergence]\label{thm:conv}
Let $\alpha, \beta \in \mathbb{R}$. 
Consider initial data $\phi_\infty\in X^2$ and a family $\{ \phi _\delta\}_{\delta \geq 1} \subset \mathcal{Z}^2$ such that 
\[ \lim_{\delta\to \infty} \| \phi_\delta -\phi_\infty\|_{X^2}=0 \quad \text{and} \quad \lim_{\delta\to \infty}\delta^{-\frac{4}{3}}\widetilde{\mathcal{E}}(\phi_\delta)=0. \]
Let $u_{\infty} \in C([0,T_{\infty,\max});X^2)$ and $u_\delta \in C([0,T_{\delta,\max});\mathcal{Z}^2)$ be the maximal solutions to \eqref{gCM} and \eqref{gINLS} with initial data $\phi_\infty$ and $\phi_\delta$, respectively. 
Then:
\begin{enumerate}
\item[{\upshape (i)}] $\liminf_{\delta\to\infty}T_{\delta,\max}\geq T_{\infty,\max}$.
\item[{\upshape (ii)}] For any $T \in (0, T_{\infty,\max})$, $u_\delta$ converges uniformly to $u_\infty$ in $X^2$ on $[0,T]$.
\item[{\upshape (iii)}]  If additionally $\phi_\infty\in \mathcal{Z}^2$ and $\lim_{\delta\to \infty}d^2(\phi_\delta,\phi_\infty)=0$, this uniform convergence holds in the $\mathcal{Z}^2$ metric: for any $T \in (0, T_{\infty,\max})$,
\[ \lim_{\delta\to \infty}\max_{t\in [0,T]}d^2\big( u_\delta(t),u_\infty(t)\big)=0. \]
\end{enumerate}
\end{theorem}

\medskip
Having established local well-posedness and asymptotic convergence, we turn to the question of \emph{global existence}. 
While the original \eqref{INLS} model possesses a completely integrable structure with an infinite hierarchy of conservation laws derived via Lax pairs (see, e.g., \cite{PG,CFL}), this standard integrable machinery breaks down in our setting. 
Indeed, writing the first conservation laws of \eqref{INLS} using the formula
\begin{equation}\label{def:lax}
\int _{\mathbb{R}} \overline{u}\mathcal{L}_u^nu\,dx\quad (n=0,1,2,\dots),\qquad \mathcal{L}_u:\ f\in H^1(\R)\mapsto -i\partial_xf+u\frac{1+i\Td}{2}(\overline{u}f)\in L^2(\R),
\end{equation}
one obtains,
\begin{align*}
E_0(u)&=\int_{\mathbb{R}} |u|^2\,dx ,\\
E_{1/2}(u)&=\int_{\mathbb{R}} \Big\{ \Im [\overline{u}\partial_xu]+\frac12|u|^4\Big\} \,dx ,\\
E_1(u)&=\int_{\mathbb{R}} \Big\{ |\partial_xu|^2+|u|^2\Im[\overline{u}\partial_xu]+\frac12|u|^2\Td\partial_x(|u|^2)+\frac13|u|^6\Big\} \,dx ,\\
&\vdots
\end{align*}

However, these integrals \emph{fail to be well-defined for} $u\in \mathcal{Z}^2$ with $\rho_u>0$, even after substituting $|u|^{2(n+1)}-\rho_u^{2(n+1)}$ for the last term $|u|^{2(n+1)}$ in $E_{n/2}(u)$. 
To overcome this fundamental roadblock, we abandon the Lax pair formalism. 
Instead, we employ a modified energy method to construct two novel, rigorously defined conservation laws for \eqref{gINLS} that are highly compatible with the nonvanishing boundaries of $\mathcal{Z}^2$. 
These invariants allow us to control the $X^2$ norm and the non-Zhidkov energy globally in time.

\begin{theorem}[Global well-posedness and a priori bounds]\label{thm:GWP}
Let $\alpha\neq 0$ and $\beta \geq 0$ be arbitrary.  
Then, the Cauchy problems for both \eqref{gINLS} with an arbitrary $\delta>0$ and \eqref{gCM} are globally well-posed in 
\[ \mathcal{Z}^2_*:=\bigcup_{\rho>0}\mathcal{Z}^2_\rho .\]
Furthermore, the following uniform estimate holds in the integrable-defocusing case $\beta=|\alpha|\neq 0$: 
\[ \sup_{t \in \mathbb{R}} \mathcal{E}^2\big( u(t)\big) <\infty. \]
\end{theorem}

\begin{remark}
In the general defocusing case ($\alpha\neq 0$ and $\beta \geq 0$), we still obtain a uniform-in-time bound for $\mathcal{E}^1(u(t))$ but only an exponential bound for $\| \partial_x^2u(t)\|_{L^2}$ (see Theorem~\ref{thm:conservation} in Section~\ref{sec4}). 
The global theory is built on two new conservation laws compatible with nonvanishing boundary conditions, which allow us to control $\|u \|_{L^{\infty}}^2$, $\| \partial_xu \|_{H^1}^2$, and $\| |u|^2-\rho_u^2\|_{L^2}$ globally in time.
\end{remark}

\medskip
\noindent\textbf{Asymptotic limits and broader context.} 
Our study of \eqref{INLS} is deeply motivated by its role as an interpolating model between two completely integrable equations. 
The singular integral operator $\Td$ is characterized by its Fourier symbol%
\footnote{~We will give a proof of the formula \eqref{T[delta]-Fourier} in Appendix~\ref{app:Fourier}.}%
:
\begin{equation}\label{T[delta]-Fourier}
\widehat{\Td f}(\xi)=-i\coth(\delta\xi)\hat{f}(\xi),
\end{equation}
where $\widehat{\cdot}$ denotes the Fourier transform. 
In the shallow-water limit, as the depth parameter $\delta\to 0$, substituting the rescaled variable $\psi (x,t)=\sqrt{\delta}u(x,t)$ formally reduces \eqref{INLS} to the standard defocusing nonlinear Schr\"odinger equation:
\begin{equation}\label{NLS}\tag{NLS}
i\partial_t \psi-\partial_x^2 \psi =-\psi |\psi |^2.
\end{equation}
Conversely, as discussed prior to Corollary~\ref{cor:LWP-CM}, the deep-water limit $\delta\to \infty$ formally yields the defocusing Calogero-Moser derivative NLS equation \eqref{gCM}.

While \eqref{INLS} itself lacks scaling invariance due to the parameter-dependent operator $\Td$, its two limiting equations possess distinct scaling symmetries. 
The \eqref{NLS} model is invariant under the $H^{-1/2}$-scaling $\psi (x,t) \mapsto \lambda \psi (\lambda x, \lambda^2 t)$, whereas the \eqref{gCM} model is invariant under the $L^2$-scaling $u(x,t) \mapsto \sqrt{\lambda} u (\lambda x, \lambda^2 t)$. 
Given the rigorous convergence of \eqref{gINLS} to the Calogero-Moser model established in Theorem~\ref{thm:conv}, it is natural to expect that \eqref{INLS} roughly inherits this $L^2$-critical behavior, making $L^2(\mathbb{R})$ its effective scaling-critical space.

The well-posedness of these limiting equations has been extensively studied. 
For the shallow-water limit, Harrop-Griffiths--Killip--Vi\c{s}an \cite{HGKV} recently proved that \eqref{NLS} is globally well-posed in $H^{s}(\mathbb{R})$ for any $s>-1/2$ (which is sharp, as it is known that \eqref{NLS} is ill-posed at the critical space $H^{-1/2}(\mathbb{R})$). 
For the deep-water limit, global well-posedness for the Calogero-Moser model was established in the Hardy space $L^2_+(\mathbb{T})$ \cite{Badreddine1} and in $L^2_+(\mathbb{R})$ \cite{killip}. 
Its focusing variant is also globally well-posed under various smallness or optimal mass assumptions \cite{Badreddine1,GL,killip,Hogan-Kowalski,kim-kim-kwon,kim-Kwon,jJeong-Kim}. 
Additional studies on the Calogero-Moser DNLS equations include inverse scattering theory \cite{Matsuno7, Frank-Read}, the zero dispersion limit \cite{Matsuno3,Badreddine3}, and the derivation of solitons \cite{GL,Matsuno2,Matsuno4,Matsuno9,Matsuno5}. 
These solitons are known to exhibit turbulent behavior on $\R$ \cite{GL} and possess a rich structure of traveling waves on $\mathbb{T}$ \cite{Badreddine2,Matsuno2,Matsuno9,Matsuno5}.

\medskip
\noindent\textbf{Organization of the paper.} 
Section~\ref{sec2} collects preliminary estimates and details the properties of the operator $T_\delta$ alongside the metric spaces $\mathcal{Z}^k$. Section~\ref{sec3} is dedicated to the proofs of local well-posedness in $\mathcal{Z}^2$ (Theorem~\ref{thm:LWP}) and the deep-water limit convergence (Theorem~\ref{thm:conv}). 
Finally, Section~\ref{sec4} establishes the modified conservation laws and constructs the global well-posedness theory (Theorem~\ref{thm:GWP}).

\medskip
{\large \textbf{Acknowledgments.}}
T.A. was supported by JSPS KAKENHI Grants Number 20K03697 and 25K07082. 
R.B. was supported by an AMS-Simons Travel Grant. 
R.B. gratefully acknowledges the University of Victoria and the Pacific Institute for the Mathematical Sciences (PIMS) for their support of the visit to UVic. 
S.I. acknowledges financial support from the Natural Sciences and Engineering Research Council of Canada (NSERC) under Grant No.\;371637-2025. 
Part of this work was carried out during an extended visit by S.I. to the Research Institute for Mathematical Sciences (RIMS). 
S.I. is grateful to colleagues and the staff at RIMS for their warm hospitality and stimulating research environment.
Part of this work was carried out during visits by R.B. and S.I. to Shizuoka University. 
They would like to thank their colleagues and the staff at Shizuoka University for their warm hospitality.
N.K. was supported by JSPS KAKENHI Grant Number 25K07066.


\section{Preliminaries}
\label{sec2}

We begin by recording the notation used throughout the paper and present several useful inequalities that will be invoked repeatedly in the analysis.

\subsection{Notation}
\label{subsec:notation}

We use $\mathcal{F}$ (or $\widehat{\,\cdot\,}$) and $\mathcal{F}^{-1}$ to denote the Fourier and its inverse transformations:
\[ \mathcal{F}[f](\xi)=\widehat{f}(\xi) :=\int_{\R}e^{-ix\xi}f(x)\,dx,\qquad \mathcal{F}^{-1}[g](x):=\frac1{2\pi}\int_{\R}e^{ix\xi}g(\xi)\,d\xi .\]
We use $U(t)$ to denote the free Schr\"odinger operator given by
\[ U(t):=e^{-it\partial_x^2}=\mathcal{F}^{-1}e^{it|\xi|^2}\mathcal{F}. \]
Our convention for the $L^2$-inner product $\left( f,g\right)_{L^2}$ is
\[ \left( f,g\right)_{L^2} :=\int_\R \overline{g}(x)  f(x) \, dx. \]

For the frequency localization, let $\varphi$ be a smooth, nonnegative radial function supported on $|\xi| \leq 2$ with $\varphi(\xi) = 1$ for $|\xi| \leq 1$. 
For each nonnegative integer $j\in \Z_{\geq 0}$, define the (inhomogeneous) Littlewood-Paley projection $P_{\leq j}$ as the Fourier multiplier with symbol $\varphi(\xi/2^j)$, and set $P_{\geq j+1} := I - P_{\leq j}$, $P_j:=P_{\leq j}-P_{\leq j-1}$ ($j\geq 1$) and $P_0:=P_{\leq 0}$.

For $T>0$, we use the convention $L_{T}^{\infty}$ to denote the space $L^{\infty}(0,T)$. 
Furthermore, for a Banach space $B$, we use $L^{\infty}_{T}B$ to denote $L^{\infty}((0,T) ;B)$.

To streamline the analysis of \eqref{gINLS}, we denote the full nonlinearity by
$$ 
    \mathcal{N}(u):=iu(\alpha +i\beta \Td)\partial_x|u|^2.
$$
By introducing the difference operator $$\mathcal{L}_\delta:=(\Td -H)\partial_x,$$ we can decompose this nonlinearity into a depth-dependent perturbation and the deep-water limit core:
\begin{equation} \label{def:N0N1}\begin{aligned}
\mathcal{N}(u)
&=-\beta u\mathcal{L}_\delta |u|^2 +iu(\alpha +i\beta H)\partial_x|u|^2 \\
&=: \mathcal{N}_\delta(u)+\mathcal{N}_\infty(u) .
\end{aligned}
\end{equation}
Recall the metric space $\mathcal{Z}^k$, the base distance $d^k(\cdot,\cdot)$, and the energy functionals $\widetilde{\mathcal{E}}(\cdot)$, $\mathcal{E}^k(\cdot)$, and $\mathcal{E}^k_\delta(\cdot)$ defined previously in \eqref{X}, \eqref{def:dk}, and \eqref{def:Ek}. For any $f,g\in \mathcal{Z}^k$, we further introduce the non-Zhidkov distance $\widetilde{d}$ and the $\delta$-scaled distance $d^k_\delta$ :
\begin{align}\label{def:d1}
\widetilde{d}(f,g)
&:=\big\| (|f|^2-\rho_f^2)-(|g|^2-\rho_g^2)\big\|_{L^2},\\
d^k_\delta(f,g)&:=\| f-g\|_{X^k}+\delta^{-\frac{4}{3}}\widetilde{d}(f,g).\notag
\end{align}
The smooth subset of these spaces is denoted by $\mathcal{Z}^\infty:=\bigcap_{k\geq 1}\mathcal{Z}^k$. Finally, we use the notation $A \lesssim B$ to indicate that there exists a generic constant $C > 0$ such that $A \leq CB$, and we write $A \sim B$ when both $A \lesssim B$ and $B \lesssim A$ hold.


\subsection{Properties of \texorpdfstring{$\Td$}{T[delta]}}

Having an unbounded symbol \eqref{T[delta]-Fourier}, the operator $\Td$ itself is not bounded on standard $L^p(\R)$ or $H^s(\R)$.
However, the difference operator $\mathcal{L}_\delta = (\Td - H)\partial_x$ is significantly better behaved. Its Fourier multiplier is given by $r_\delta(\xi) = \big(\coth(\delta\xi) - \mathrm{sgn}(\xi)\big)\xi$, which satisfies the uniform bound $|r_\delta(\xi)| \lesssim \delta^{-1}$. By Plancherel's theorem, it follows that $\mathcal{L}_\delta$ is bounded on $H^s(\R)$ for any $s \in \R$, yielding the estimate
\begin{equation}\label{est:Ldelta}
\| \mathcal{L}_\delta f\|_{H^s}\lesssim \delta^{-1}\| f\|_{H^s}.
\end{equation}
Since the Hilbert transform $H$ is a bounded isometry on $H^s(\R)$, combining \eqref{est:Ldelta} with the decomposition $\Td\partial_x = H\partial_x + \mathcal{L}_\delta$ demonstrates that $\Td\partial_x$ maps $H^{s+1}(\R)$ boundedly into $H^s(\R)$:
\begin{equation}\label{est:Tdelta}
\| \Td\partial_xf\|_{H^s}\lesssim \| \partial_xf\|_{H^s}+\delta^{-1}\| f\|_{H^s} .
\end{equation}
When working with functions $f \in \mathcal{Z}^k$, we can circumvent the lack of decay by exploiting the algebraic identity $\partial_x|f|^2 = \partial_x(|f|^2 - \rho_f^2)$. Because $|f|^2 - \rho_f^2 \in L^2(\R)$ by the definition of the Zhidkov space, applying \eqref{est:Ldelta} and \eqref{est:Tdelta} immediately yields the following crucial $L^2$ estimates:
\begin{equation}\label{est:TLonZ}
\begin{aligned}
\| \Td\partial_x|f|^2\|_{L^2}&\lesssim \delta^{-1}\big\| |f|^2-\rho_{f}^2\big\|_{L^2}+\| f\|_{L^\infty}\| \partial_xf\|_{L^2},\\
\| \mathcal{L}_\delta |f|^2\|_{L^2}&\lesssim \delta^{-1}\big\| |f|^2-\rho_{f}^2\big\|_{L^2}. \end{aligned}
\end{equation}
Finally, we observe that the Fourier symbol of $\Td\partial_x$ is $\xi \coth(\delta \xi)$. Because this symbol is strictly real, even, and non-negative, the operator $\Td\partial_x$ preserves real-valuedness ($\overline{\Td\partial_xf} = \Td\partial_x\overline{f}$), is symmetric, and is positive semi-definite. Specifically, for any suitable functions $f$ and $g$, we have:
\begin{equation}\label{id:Tdeltasymm}
\left( \Td\partial_xf,f\right)_{L^2} \geq 0,\quad \left( \Td\partial_x f,g\right)_{L^2} =\left( f,\Td\partial_xg\right)_{L^2} ,\qquad \int_{\R} f\Td\partial_xg\,dx=\int_{\R}g\Td\partial_xf\,dx ,
\end{equation}
The exact same symmetry and positivity properties hold for the deep-water limit operator $H\partial_x$, whose corresponding Fourier symbol is simply $|\xi|$.


\subsection{Properties of the space \texorpdfstring{$\mathcal{Z}^k$}{Z[k]} and control of Littlewood-Paley operators}
\label{subsec:Zk}

We begin with a fundamental $L^\infty$ bound for functions in $\mathcal{Z}^k$, which will be crucial for the global theory in Section~\ref{sec4}. For any $f\in \mathcal{Z}^1_\rho$ with $\rho\geq 0$, we have
\begin{equation}\label{est:Linfty}
\rho \leq \| f\|_{L^\infty} \lesssim \rho+\big\| |f|^2-\rho^2\big\|_{L^2}^{\frac13}\|\partial_xf\|_{L^2}^{\frac13}.
\end{equation}
The lower bound follows trivially from the fact that $|f|^{2}-\rho^{2} \in L^{2}(\mathbb{R})$. The upper bound is derived by combining the Gagliardo-Nirenberg and Young inequalities
\[ \begin{aligned}
\| f\|_{L^\infty}^2&\leq \rho^2+\big\| |f|^2-\rho^2\big\|_{L^\infty} \leq \rho^2+C\big\| |f|^2-\rho^2\big\|_{L^2}^{\frac12}\| f\|_{L^\infty}^{\frac12}\| \partial_xf\|_{L^2}^{\frac12} \\
&\leq \rho^2+\frac12\| f\|_{L^\infty}^2+C\big\| |f|^2-\rho^2\big\|_{L^2}^{\frac23}\| \partial_xf\|_{L^2}^{\frac23}. \end{aligned} \]

Next, because of the Sobolev embedding $H^1(\R)\hookrightarrow L^\infty(\R)$, the space $\mathcal{Z}^k_\rho$ is closed under the addition of standard $H^k(\R)$ functions. For any integer $k\geq 1$, $\rho\geq 0$, $f\in \mathcal{Z}^k_\rho$, and $g\in H^k(\R)$, we have $f+g\in \mathcal{Z}^k_\rho$ along with the energy bounds
\begin{equation}\label{est:Ekf+g}
\begin{aligned}
\| f+g\|_{X^k}\leq \| f\|_{X^k}+\| g\|_{X^k} \quad \text{and} \quad
\widetilde{\mathcal{E}}(f+g)\lesssim \widetilde{\mathcal{E}}(f)+\big( \| f\|_{L^\infty}+\| g\|_{L^\infty}\big) \| g\|_{L^2}.
\end{aligned}
\end{equation}
Similarly, for  $f_1, f_2 \in \mathcal{Z}^1$  and $g_1, g_2 \in H^1(\R)$, one infers
\begin{equation}\label{est:d1f+g}
\begin{aligned}
\| (f_1+g_1)-(f_2+g_2)\|_{X^1}&\leq \| f_1-f_2\|_{X^1}+\| g_1-g_2\|_{X^1} \\
\widetilde{d}(f_1+g_1,f_2+g_2)&\lesssim \widetilde{d}(f_1,f_2)+\| g_1\|_{L^2}\| f_1-f_2\|_{L^\infty}\\
&\quad +(\| f_2\|_{L^\infty}+\| g_1\|_{L^\infty}+\| g_2\|_{L^\infty})\| g_1-g_2\|_{L^2}.
\end{aligned}
\end{equation}

Finally, we collect several smoothing and convergence properties for the Littlewood-Paley projectors $P_{\leq \ell}$ acting on Zhidkov spaces. These estimates follow smoothly from standard Bernstein inequalities. For any $\rho\geq 0$, $\ell\geq 0$, and integers $k, j\geq 1$:

\begin{itemize}
    \item The operator $P_{\leq \ell}$ maps $\mathcal{Z}^k_\rho \to \mathcal{Z}^{k+j}_\rho$, satisfying:
    \begin{equation}\label{est:EkPl}
    \| P_{\leq\ell}f\|_{X^k}\leq \| f\|_{X^k},\qquad \| P_{\leq \ell}f\|_{X^{k+j}}\lesssim 2^{j\ell}\| f\|_{X^k}.
    \end{equation}
    The non-Zhidkov energy is bounded by tracking the high-frequency tails
    \[\begin{aligned}
\widetilde{\mathcal{E}}(P_{\leq \ell}f)&\leq \widetilde{\mathcal{E}}(f) +\big\| |P_{\leq \ell}f|^2-|f|^2\big\|_{L^2} \leq \widetilde{\mathcal{E}}(f)+2\| f\|_{L^\infty}\| P_{\geq \ell+1}f\|_{L^2} \\
&\leq \widetilde{\mathcal{E}}(f)+2^{1-\ell}\| f\|_{L^\infty}\| P_{\geq \ell+1}\partial_xf\|_{L^2}\leq \widetilde{\mathcal{E}}(f)+2^{1-\ell}\| f\|_{X^1}^2.
\end{aligned}\]

    \item  As $\ell \to \infty$, $P_{\leq \ell}f \to f$ in the $\mathcal{Z}^k$ topology. Specifically,
    \begin{equation}\label{est:dconv}
\begin{aligned}
\| P_{\leq \ell}f-f\|_{X^k}
&\leq \| P_{\geq \ell+1}\partial_xf\|_{H^{k-1}}+\| P_{\geq \ell+1}f\|_{L^\infty} \lesssim \| P_{\geq \ell+1}\partial_xf\|_{H^{k-1}} +2^{-\frac12\ell}\| P_{\geq \ell+1}\partial_xf\|_{L^2}, \\
\widetilde{d}(P_{\leq \ell}f,f)
&=\big\| |P_{\leq \ell}f|^2-|f|^2\big\|_{L^2} \leq 2^{1-\ell}\| f\|_{X^1}^2. 
\end{aligned}
\end{equation}
    Note that if $f\in C([0,T];\mathcal{Z}^k)$, this convergence holds uniformly in time on $[0,T]$.

    \item  To bound the distance between two projected functions, we use the algebraic expansion of $|Af|^{2}-|Ag|^{2}$ (applied with the operator $A = P_{\leq \ell}$)
    \begin{equation}\label{id:sqdiff}
    \begin{aligned}
    |Af|^{2}-|Ag|^{2} &= |f|^2-|g|^2 -(1-A)(f-g)\cdot \overline{Af} - (1-A)g\cdot \overline{A (f-g)} \\
    &\quad - (f-g) \cdot \overline{(1-A)f} - g \cdot \overline{(1-A)(f-g)}.
    \end{aligned}
    \end{equation}
    Applying this identity yields the distance bound
    \begin{equation}\label{est:dkPl} 
    \widetilde{d}(P_{\leq \ell}f,P_{\leq \ell}g)\lesssim \widetilde{d}(f,g)+2^{-\ell}\big( \| f\|_{X^1}+\| g\|_{X^1}\big) \| f-g\|_{X^1}.
    \end{equation}
    Furthermore, for different cutoffs $\ell, m \in \Z_{\geq 0}$ and functions $f,g\in \mathcal{Z}^2$, combining the triangle inequality with \eqref{est:dconv} gives:
    \begin{equation} \label{est:d1PlPm}
    \begin{aligned}
    \| P_{\leq \ell}f-P_{\leq m}g\|_{X^1}&\lesssim \| f-g\|_{X^1}+2^{-\ell}\| f\|_{X^2}+2^{-m}\| g\|_{X^2}, \\
    \widetilde{d}(P_{\leq \ell}f,P_{\leq m}g)&\lesssim \widetilde{d}(f,g)+2^{-\ell}\| f\|_{X^1}^2+2^{-m}\| g\|_{X^1}^2.
    \end{aligned}
    \end{equation}
\end{itemize}


\subsection{Estimates for the free Schr\"odinger operator}
\label{subsec:Ut}

The action of $U(t):=e^{-it\partial_x^2}$ on Zhidkov-type spaces is well understood (see, e.g., \cite{Gerard,Chen}). Applying standard Fourier analysis alongside the pointwise bound $|(e^{it\xi^2}-1)/\xi|\lesssim |t|^{1/2}$ reveals that
\begin{equation}\label{est:UtL2}
\| U(t)\phi -\phi\|_{L^2}\lesssim |t|^{\frac12}\| \partial_x\phi\|_{L^2}.
\end{equation}
Combining \eqref{est:UtL2} with the Gagliardo-Nirenberg inequality yields $L^\infty$ control 
\begin{equation}\label{est:UtLinfty}
\begin{aligned}
\| U(t)\phi-\phi\|_{L^\infty}&\lesssim |t|^{\frac14}\| \partial_x\phi\|_{L^2},\\
\| U(t)\phi\|_{L^\infty}&\lesssim \| \phi\|_{L^\infty}+|t|^{\frac14}\|\partial_x\phi\|_{L^2},
\end{aligned}
\end{equation}
These displacement bounds allow us to track the evolution of the squared amplitude and energy functionals. By \eqref{est:UtL2} and \eqref{est:UtLinfty}, we can bound the variation of the squares
\begin{equation}\label{est:UtsqL2}
\begin{aligned}
\big\| |U(t)\phi|^2-|\phi|^2\big\|_{L^2}&\lesssim \Big( \| \phi\|_{L^\infty}+|t|^{\frac14}\| \partial_x\phi\|_{L^2}\Big) |t|^{\frac12}\| \partial_x\phi\|_{L^2}, \\
\big\| |U(t)\phi|^2-\rho^2\big\|_{L^2}&\lesssim \big\| |\phi|^2-\rho^2\big\|_{L^2}+\Big( \| \phi\|_{L^\infty}+|t|^{\frac14}\| \partial_x\phi\|_{L^2}\Big) |t|^{\frac12}\| \partial_x\phi\|_{L^2}.
\end{aligned}
\end{equation}
Since $U(t)$ acts unitarily on $H^s(\R)$, the estimate \eqref{est:UtsqL2} guarantees that the linear evolution preserves the space $\mathcal{Z}^k_\rho$. Specifically, for any $\rho \geq 0$ and $\phi \in \mathcal{Z}^k_\rho$, we have $U(t)\phi \in C(\R;\mathcal{Z}^k_\rho)$ with the energy bounds
\begin{equation}\label{est:UtEk}
\begin{aligned}  
\| U(t)\phi\|_{X^k}{}\lesssim \| \phi\|_{X^k}+|t|^{\frac14}\| \partial_x\phi\|_{L^2}, \quad \text{and}\quad
\widetilde{\mathcal{E}}(U(t)\phi){}\lesssim \widetilde{\mathcal{E}}(\phi)+(1+|t|^{\frac34})\| \phi\|_{X^1}^2.
\end{aligned} 
\end{equation}
Finally, to establish continuity with respect to the initial data, we estimate the distance between two evolving states $\phi, \psi \in \mathcal{Z}^{1}$. Applying the algebraic identity \eqref{id:sqdiff} with the operator $A = U(t)$ and utilizing the bounds \eqref{est:UtL2} and \eqref{est:UtLinfty}, we obtain
\begin{equation}\label{est:d-Ut}
\begin{aligned}
\widetilde{d}(U(t)\phi, U(t)\psi)
&={}\big\| (|U(t)\phi|^2-\rho_\phi^2)-(|U(t)\psi|^2-\rho_\psi^2) \big\|_{L^2} \\
&\lesssim 
\big\| (|\phi|^2-\rho_\phi^2)-(|\psi|^2-\rho_\psi^2)\big\|_{L^2}  
+\big\| U(t)(\phi-\psi)-(\phi-\psi)\big\|_{L^2} \| U(t)\phi \|_{L^\infty} \\
&\quad +\| U(t)\psi -\psi\|_{L^2}\| U(t)(\phi-\psi)\|_{L^\infty} 
+\| \phi-\psi\|_{L^\infty}\| U(t)\phi-\phi\|_{L^2} \\
&\quad +\| \psi\|_{L^\infty}\big\| U(t)(\phi-\psi)-(\phi-\psi)\big\|_{L^2} \\
&\lesssim \widetilde{d}(\phi, \psi) +(1+|t|^{\frac34})\big( \| \phi\|_{X^1}+\| \psi\|_{X^1}\big)\|\phi -\psi\|_{X^1}.
\end{aligned}
\end{equation}
Furthermore, the base $X^k$ norm satisfies
\begin{equation}\label{est:UtXk-diff}
    \| U(t)\phi-U(t)\psi\|_{X^k}\lesssim \| \phi-\psi\|_{X^k}+|t|^{\frac14}\| \phi-\psi\|_{X^1}. 
\end{equation}
Together, \eqref{est:d-Ut} and \eqref{est:UtXk-diff} confirm that the mapping $\phi\mapsto U(t)\phi$ is continuous in the metric space $\mathcal{Z}^k$.


\subsection{Nonlinear estimates and an integration by parts lemma}

We record several crucial nonlinear bounds for the terms defined in \eqref{def:N0N1}.
\begin{lemma}\label{lem:N0N1}
Let $\delta>0$ and $\alpha, \beta \in \mathbb{R}$. The following estimates hold with implicit constants that are independent of $\delta$.
\begin{enumerate}
\item[$\mathrm{(i)}$] For any integer $k\geq 1$ and $f\in \mathcal{Z}^k$,
\begin{align*}
\| \mathcal{N}_\delta(f)\|_{L^2}&\lesssim \delta^{-1}\widetilde{\mathcal{E}}(f)\| f\|_{L^\infty}, \\
\| \partial_x^k\mathcal{N}_\delta(f)\|_{L^2}&\lesssim_k \delta^{-1}\Big( \big[ \widetilde{\mathcal{E}}(f)\big]^{\frac12}\| f\|_{X^1}+\| f\|_{X^1}^2\Big) \| \partial_xf\|_{H^{k-1}}.
\end{align*}
Consequently, 
\begin{align*}
\| \mathcal{N}_\delta(f)\|_{L^\infty}&\lesssim \delta^{-1}\Big( \big[ \widetilde{\mathcal{E}}(f)\big] ^{\frac34} \| f\|_{X^1}^{\frac32}+\big[ \widetilde{\mathcal{E}}(f)\big] ^{\frac12}\| f\|_{X^1}^2\Big), \\
\| \mathcal{N}_\delta(f)\|_{X^1}&\lesssim \delta^{-1}\Big( \big[ \widetilde{\mathcal{E}}(f)\big] ^{\frac34} \| f\|_{X^1}^{\frac32}+\| f\|_{X^1}^3\Big).
\end{align*}
\item[$\mathrm{(ii)}$] For any integer $k\geq 0$ and $f\in \mathcal{Z}^{k+1}$, 
\[ \| \partial_x^k\mathcal{N}_\infty(f)\|_{L^2}\lesssim_k \| f\|_{X^1}^2\| \partial_xf\|_{H^k}.\]
In particular, we have the full Sobolev bound:
\[ \| \mathcal{N}_\infty(f)\|_{H^k}\lesssim_k \| f\|_{X^1}^2\| f\|_{X^{k+1}}.\]
\item[$\mathrm{(iii)}$] For any $f_1,f_2\in \mathcal{Z}^1$, 
\begin{align*}
\| \mathcal{N}_\delta(f_1)-\mathcal{N}_\delta(f_2)\|_{L^2}&\lesssim \delta^{-1}\Big( \widetilde{\mathcal{E}}(f_1)\| f_1-f_2\|_{L^\infty} +\| f_2\|_{L^\infty} \widetilde{d}(f_1,f_2)\Big) , \\
\| \partial_x(\mathcal{N}_\delta(f_1)-\mathcal{N}_\delta(f_2))\|_{L^2}&\lesssim \delta^{-1}\Big\{ \Big( \big[ \widetilde{\mathcal{E}}(f_1)\big] ^{\frac12} \| f_1\|_{X^1}+\| f_1\|_{X^1}^2+\| f_2\|_{X^1}^2\Big)\| f_1-f_2\|_{X^1}\\
&\quad\qquad +\| f_2\|_{X^1}\big( \| f_1\|_{X^1}+\| f_2\|_{X^1}\big)^{\frac12}\| f_1-f_2\|_{X^1}^{\frac12}\big[ \widetilde{d}(f_1,f_2)\big] ^{\frac12} \Big\} ,\\
\| \mathcal{N}_\infty(f_1)-\mathcal{N}_\infty(f_2)\|_{L^2}&\lesssim
\big( \| f_1\|_{X^1}^2+\| f_2\|_{X^1}^2\big) \| f_1-f_2\|_{X^1}.
\end{align*}
\end{enumerate}
\end{lemma}
\begin{proof}
These estimates follow mechanically from applying H\"older's inequality, the Gagliardo-Nirenberg inequality, the Plancherel bound \eqref{est:Ldelta}, the Zhidkov energy identity \eqref{est:TLonZ}, and the uniform $L^2(\R)$ boundedness of the Hilbert transform $H$.

As a representative example, we demonstrate the second estimate in (iii). Applying the product rule and triangle inequality yields
\begin{align*}
\| \partial_x(\mathcal{N}_\delta(f_1)-\mathcal{N}_\delta(f_2))\|_{L^2} 
&\lesssim \| \partial_x(f_1-f_2)\|_{L^2}\| \mathcal{L}_\delta|f_1|^2\|_{L^\infty}+\| f_1-f_2\|_{L^\infty}\| \mathcal{L}_\delta\partial_x|f_1|^2\|_{L^2} \\
&\quad +\| \partial_xf_2\|_{L^2}\| \mathcal{L}_\delta(|f_1|^2-|f_2|^2)\|_{L^\infty} +\| f_2\|_{L^\infty}\| \mathcal{L}_\delta\partial_x(|f_1|^2-|f_2|^2)\|_{L^2} .
\end{align*}
To bound the $\mathcal{L}_\delta$ terms, we exploit the identity $\mathcal{L}_{\delta}(|f_{1}|^{2}-|f_{2}|^{2}) = (\Td -H)\partial_{x}\big\{ (|f_{1}|^{2}-\rho_{f_1}^{2})-(|f_{2}|^{2}-\rho_{f_2}^{2}) \big\}$. This allows us to apply \eqref{est:Ldelta} and Gagliardo-Nirenberg,
\begin{align*}
&\| \partial_x(\mathcal{N}_\delta(f_1)-\mathcal{N}_\delta(f_2))\|_{L^2} \\
&\lesssim \| f_1-f_2\|_{X^1}\delta^{-1}\Big( \big\| |f_1|^2-\rho_{f_1}^2\big\|_{L^2}^{\frac12}\| \partial_x|f_1|^2\|_{L^2}^{\frac12}+\| \partial_x|f_1|^2\|_{L^2}\Big) \\
&\quad +\| f_2\|_{X^1}\delta^{-1}\Big( \big\| (|f_1|^2-\rho_{f_1}^2)-(|f_2|^2-\rho_{f_2}^2)\big\|_{L^2}^{\frac12}\| \partial_x(|f_1|^2-|f_2|^2)\|_{L^2}^{\frac12}+\| \partial_x(|f_1|^2-|f_2|^2)\|_{L^2}\Big) \\
&\lesssim \delta^{-1}\Big( \big[ \widetilde{\mathcal{E}}(f_1)\big]^{\frac12}\|f \|_{X^1} +\| f_1 \|_{X^1}^2 \Big) \| f_1-f_2\|_{X^1} \\
&\quad + \delta^{-1} \| f_2\|_{X^1}
 \Big( \big[ \widetilde{d}(f_1,f_2)\big]^{\frac12} \big\{ ( \| f_1\|_{X^1} +  \| f_2\|_{X^1}) \| f_1-f_2 \|_{X^1}\big\}^{\frac12} + (\| f_1\|_{X^1}+\| f_2\|_{X^1})\| f_1-f_2\|_{X^1}\Big) .
\end{align*}

For the higher-order bounds ($k\geq 2$) in the second estimate of (i) and the first estimate of (ii), we use standard interpolation inequalities to control intermediate derivatives $\| \partial_x^jf\|_{L^2}$ and $\| \partial_x^jf\|_{L^\infty}$ (where $1\leq j\leq k-1$) using $\| \partial_xf\|_{L^2}$ and $\| \partial_x^kf\|_{L^2}$. The remaining estimates are proved analogously.
\end{proof}

We conclude this section with an algebraic lemma governing integration by parts, which is pivotal for the energy method.
\begin{lemma}\label{lem:ibp}
Let $f\in W^{1,\infty}(\R)$ and $g\in H^1(\R)$. Then:
\begin{align*}
&\big| \Re \big( f(\overline{f}\partial_xg),g \big)_{L^2} \big| + \big| \Re \big( f(f\partial_x\overline{g}),g\big)_{L^2} \big| + \big| \Re \big( f(iH)(\overline{f}\partial_xg),g\big)_{L^2} \big| \lesssim \| f\|_{L^\infty}\| \partial_xf\|_{L^\infty}\| g\|_{L^2}^2.
\end{align*}
\end{lemma}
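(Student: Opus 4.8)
The plan is to treat the three terms separately, since the first two reduce to an elementary integration by parts while the third hides a genuine cancellation. Throughout I will integrate by parts freely: the hypotheses $f\in W^{1,\infty}(\R)$ and $g\in H^1(\R)$ force every product appearing below to lie in $H^1(\R)$ and to decay at infinity, so all boundary terms vanish, and they also justify commuting $H$ with $\partial_x$.

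For the first term I would use $\Re[(\partial_x g)\overline g]=\tfrac12\partial_x|g|^2$ to write
\[
\Re\big(f(\overline f\partial_x g),g\big)_{L^2}=\int_\R |f|^2\,\Re[(\partial_x g)\overline g]\,dx=\tfrac12\int_\R |f|^2\,\partial_x|g|^2\,dx=-\tfrac12\int_\R \partial_x(|f|^2)\,|g|^2\,dx,
\]
and then bound $|\partial_x(|f|^2)|=|2\Re(\overline f\partial_x f)|\le 2\|f\|_{L^\infty}\|\partial_x f\|_{L^\infty}$. For the second term I would similarly use $(\partial_x\overline g)\overline g=\tfrac12\partial_x(\overline g^{\,2})$ and integrate by parts to reach $-\Re\int_\R f(\partial_x f)\,\overline g^{\,2}\,dx$, again bounded by $\|f\|_{L^\infty}\|\partial_x f\|_{L^\infty}\|g\|_{L^2}^2$. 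Both are immediate.

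The third term is the crux. Writing $I:=\int_\R f\overline g\,H(\overline f\partial_x g)\,dx$, the quantity to estimate is $\Re(iI)=-\Im I$. A direct bound via the $L^2$-boundedness of $H$ would cost one derivative of $g$ and only yield $\|\partial_x g\|_{L^2}$, which is too weak; the point is instead to exhibit an exact cancellation. I would first move the derivative by the Leibniz rule, $\overline f\partial_x g=\partial_x(\overline f g)-(\partial_x\overline f)g$, and commute $H$ with $\partial_x$ to split $I=J-R$ with
\[
J:=\int_\R f\overline g\,\partial_x H(\overline f g)\,dx,\qquad R:=\int_\R f\overline g\,H\big((\partial_x\overline f)g\big)\,dx.
\]
The remainder $R$ is harmless: by Cauchy--Schwarz and $\|H\|_{L^2\to L^2}=1$, together with $\|(\partial_x\overline f)g\|_{L^2}\le\|\partial_x f\|_{L^\infty}\|g\|_{L^2}$, one gets $|R|\le\|f\|_{L^\infty}\|\partial_x f\|_{L^\infty}\|g\|_{L^2}^2$. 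For $J$ the key observations are that $f\overline g=\overline{\overline f g}$, so with $u:=f\overline g$ the factor inside is $\overline u$, and that $\partial_x H$ is exactly the Fourier multiplier $|D|$ with symbol $|\xi|$, which is real, even, and self-adjoint. Hence $J=\int_\R u\,|D|\overline u\,dx=(u,|D|u)_{L^2}=\tfrac1{2\pi}\int_\R|\xi|\,|\widehat u(\xi)|^2\,d\xi\in\R$. Since $J$ is real it contributes nothing to $\Im I$, so $\Re(iI)=-\Im I=\Im R$, and the bound on $R$ closes the estimate.

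The main obstacle, and the only nontrivial point, is precisely this third term: the Hilbert transform does not recover the derivative carried by $\overline f\partial_x g$, so no quantitative smoothing is available. The resolution is purely algebraic — the two occurrences of the \emph{same} $f$ make the leading contribution of the form $(u,|D|u)_{L^2}$ with $u=f\overline g$, which is real and is therefore annihilated when one takes the real part of $i$ times it. Care is needed only to justify the commutation $H\partial_x=\partial_x H$ and the integration by parts, both legitimate in the stated $W^{1,\infty}\times H^1$ setting.
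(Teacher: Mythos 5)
Your proof is correct and follows essentially the same route as the paper's: integration by parts for the first two terms, and for the third the Leibniz splitting $\overline f\partial_x g=\partial_x(\overline f g)-(\partial_x\overline f)g$, with the leading term annihilated because the quadratic form of $H\partial_x=|D|$ is real (the paper invokes its positivity/symmetry, you verify the same fact via Plancherel) and the remainder bounded by $L^2$-boundedness of $H$. No gaps.
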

\begin{remark}
Crucially, the right-hand side depends only on the $L^2$-norm of $g$, absorbing the derivative $\partial_x g$. However, the standard energy method fails to provide a comparable absorption for the term $\Re \big( f(iH)(f\partial_x\overline{g}),g\big)_{L^2}$. To control interactions of this specific form, we must resort to a modified energy method later in the paper.
\end{remark}

\begin{proof}[Proof of Lemma~\ref{lem:ibp}]
The first two terms are easily treated by integration by parts (one can estimate the second term without taking the real part).
For the term involving $H$, we rewrite it as
\[ \Re \big( f(iH)(\overline{f}\partial_xg),g\big)_{L^2} = \Re \big( iH\partial_x(\overline{f}g),\overline{f}g\big)_{L^2} - \Re \big( f(iH)(g\partial_x\overline{f}),g\big)_{L^2}.\]
Observe that the first term on the right-hand side vanishes by the positivity of $H\partial_x$. The remaining term is easily bounded by applying the Cauchy-Schwarz inequality along with the uniform boundedness of $H$ on $L^2(\R)$.
\end{proof}


\section{Local well-posedness}
\label{sec3}

In this section, we prove our main local well-posedness result, Theorem~\ref{thm:LWP}.  We focus on the case $\delta \geq 1$ and establish local well-posedness uniformly in $\delta$ for initial data $\phi\in \mathcal{Z}^2$ satisfying $\mathcal{E}^2_\delta(\phi) \leq M_0^2$. The argument easily adapts to general $\delta > 0$ if we permit various constants to depend on $\delta$.

At the end of the section, we will also provide the proof of Theorem~\ref{thm:conv}. Throughout, we fix $\alpha, \beta \in \R$ and suppress the dependence of constants on these parameters.

We proceed via a standard compactness argument. For each frequency truncation parameter $\ell \in \Z_{\geq 0}$, we consider the mollified equation
\begin{equation}\label{gINLSmoll}
i\partial_t u_\ell - \partial_x^2 u_\ell = P_{\leq \ell}\mathcal{N}(P_{\leq \ell}u_\ell),
\end{equation}
which corresponds to the integral equation
\begin{equation}\label{gINLSmoll-int}
u_\ell(t) = U(t)u_\ell(0) - i\int_0^t U(t-s)P_{\leq \ell}\mathcal{N}(P_{\leq \ell}u_\ell(s))\,ds,
\end{equation}
where we recall $U(t):=e^{-it\partial_x^2}\,$ and  $u_\ell(0) = P_{\leq \ell}\phi$. 

The proof proceeds in four main steps. 
First, in Proposition~\ref{prop:LWPmoll}, we establish the existence of a unique local solution  $u_\ell \in C([0,T_\ell); \mathcal{Z}^\infty)$ to the mollified problem with initial data $u_\ell(0) = P_{\leq \ell}\phi$. Second, in Proposition~\ref{prop:apriori-Zk}, we derive a uniform-in-$\ell$ time of existence $T_0 > 0$ and an a priori $\mathcal{Z}^2$ bound for the sequence $\{u_\ell\}$. Third, in Proposition~\ref{prop:apriori-d1}, we prove that $\{u_\ell\}$ is a Cauchy sequence in $C([0,T_*];\mathcal{Z}^1)$ for a possibly shorter time $T_* \leq T_0$, allowing us to construct a solution to \eqref{gINLS} by taking the limit $\ell \to \infty$. Finally, we verify the uniqueness and continuity of the flow map in $\mathcal{Z}^2$, completing the proof of Theorem~\ref{thm:LWP}.

\begin{proposition}\label{prop:LWPmoll}
Let {$\delta>0$ and} $\phi\in \mathcal{Z}^1$. 
 For each $\ell \in \Z_{\geq 0}$, the Cauchy problem \eqref{gINLSmoll} with initial data $u_\ell(0) = P_{\leq \ell}\phi$ admits a unique maximal solution $ u_\ell \in C([0,T_\ell); \mathcal{Z}^\infty). $ Furthermore, if the maximal time of existence $T_\ell$ is finite, the solution satisfies the blow-up criterion
 $$ \lim_{t\uparrow T_\ell} \mathcal{E}^1(u_\ell(t)) = \infty. $$
\end{proposition}

\begin{proof}
Because the initial data satisfies $\mathrm{supp}\,(\widehat{P_{\leq \ell}\phi}) \subset \{ |\xi| \leq 2^{\ell+1}\}$, any local solution constructed via standard iteration will preserve this frequency localization for all time, ensuring spatial smoothness, that is $u_\ell \in \mathcal{Z}^\infty$. It therefore suffices to prove local existence in $\mathcal{Z}^1$ for data satisfying $\mathcal{E}^1(\phi) \leq M_0^2$. We seek a fixed point for the Duhamel map
$$
    \Gamma[u](t) := U(t)P_{\leq \ell}\phi - i\int_0^t U(t-s)P_{\leq \ell}\mathcal{N}(P_{\leq \ell}u(s))\,ds. 
$$
For $0 < T \leq 1$, we define the complete metric space
$$ 
    X_T := \big\{ u \in C([0,T]; \mathcal{Z}^1) : \| \mathcal{E}^1(u)\|_{L^\infty_T} \leq 2c M_0^2 \big\}, \qquad d_T(u,v) := \| d^1(u,v)\|_{L^\infty_T},
$$
where $c$ is the constant from the linear evolution bound \eqref{est:UtEk}. Using the Littlewood-Paley bounds \eqref{est:EkPl} and the nonlinear estimates from Lemma~\ref{lem:N0N1}, the integral term is easily controlled in $H^1$. For any $u, v \in X_T$, we obtain the following bounds
\begin{align}
\Big\| \int_0^t U(t-s)P_{\leq \ell}\mathcal{N}(P_{\leq \ell}u)\,ds \Big\|_{L^\infty_T H^1} &\lesssim T(\delta^{-1} + 2^\ell)M_0^3, \label{est:LWPmoll1} \\
\Big\| \int_0^t U(t-s)P_{\leq \ell}\big[ \mathcal{N}(P_{\leq \ell}u) - \mathcal{N}(P_{\leq \ell}v) \big]\,ds \Big\|_{L^\infty_T H^1} &\lesssim T(\delta^{-1} + 2^\ell)(1 + M_0^3)d_T(u,v). \label{est:LWPmoll2}\end{align}
Applying the metric properties \eqref{est:Ekf+g} and \eqref{est:d1f+g}, these estimates imply that $\Gamma$ maps $X_T$ into itself and satisfies the contraction property
$$ 
    d_T(\Gamma[u], \Gamma[v]) \leq \frac{1}{2} d_T(u,v), 
$$
provided we choose $T > 0$ sufficiently small such that $T(\delta^{-1} + 2^\ell)(1 + M_0^4) \ll 1$. By the Banach fixed-point theorem, $\Gamma$ admits a unique local solution $u_\ell \in X_T$. 

Extending this local solution to the maximal interval $[0, T_\ell)$ and establishing the standard blow-up alternative follows from classical continuation arguments, which completes the proof.
\end{proof}


\begin{proposition}[Uniform $\mathcal{Z}^2$ estimate]\label{prop:apriori-Zk}

Let $M_0>0$ and $\delta\geq 1$. Suppose $\phi \in \mathcal{Z}^2$ is initial data satisfying the energy bound $\mathcal{E}^2_\delta(\phi)\leq M_0^2$. For each frequency $\ell \geq 0$, let $u_\ell \in C([0,T_\ell); \mathcal{Z}^\infty)$ be the unique maximal solution to the mollified problem \eqref{gINLSmoll} with initial data $u_\ell(0)=P_{\leq \ell}\phi$. Then, there exists a uniform time of existence $T_0 \in (0, 1]$, satisfying 
$$
    (1+M_0^4)^{-1} \lesssim T_0 \leq 1
$$
such that $T_\ell > T_0$ for all $\ell \geq 0$. Furthermore, on the interval $[0, T_0]$, the sequence $\{u_\ell\}$ satisfies the uniform $\mathcal{Z}^2$ energy bound
\begin{equation}\label{est:aprioriZ2}
\| \mathcal{E}^2_\delta(u_\ell) \|_{L^\infty_{T_0}} \lesssim \mathcal{E}^2_\delta(\phi).
\end{equation}
Additionally, we have the following lower-order control
\begin{align}\label{est:aprioriZ2'}
\| u_\ell \|_{L^\infty_{T_0}X^1}^2 \lesssim \| \phi \|_{X^1}^2, 
\quad \text{and}\quad
\| \widetilde{\mathcal{E}}(u_\ell) \|_{L^\infty_{T_0}} \lesssim \widetilde{\mathcal{E}}(\phi) + \| \phi \|_{X^1}^2. 
\end{align}
Moreover, for any integer $k \geq 3$, the higher-order regularities satisfy
\begin{equation}\label{est:aprioriZk}
\| \partial_x^k u_\ell \|_{L^\infty_{T_0}L^2}^2 \lesssim_k \| \partial_x^k P_{\leq \ell}\phi \|_{L^2}^2 + \| \phi \|_{X^1}^2.
\end{equation}
\end{proposition}

\begin{proof}
To establish the uniform bound \eqref{est:aprioriZ2}, the principal difficulty arises from a derivative loss in the highest-order energy estimate.
To overcome this, we employ a \emph{modified energy method}: we design a frequency-localized correction term that neutralizes the problematic nonlinear interactions in the nonlinearity of \eqref{gINLSmoll}.

Let $\ell\geq 0$, and take arbitrary $T\in (0,1]$ with $T<T_\ell$.

We begin by controlling $\|u_\ell\|_{X^1}$ and $\widetilde{\mathcal{E}}(u_\ell)$. Using the integral equation, the linear bound \eqref{est:UtEk}, the metric inequality \eqref{est:Ekf+g} with $k=1$, the Littlewood-Paley property $\| P_{\leq \ell}u_\ell \| \leq \| u_\ell \|$, and the nonlinear bounds from Lemma~\ref{lem:N0N1}, we can bound the Duhamel integral directly in terms of $u_\ell$
\begin{equation}\label{est:aprioriZ2-A}
\begin{aligned}
\| u_\ell \|_{L^\infty_TX^1}^2 
&\lesssim \| U(t)P_{\leq \ell}\phi \|_{L^\infty_TX^1}^2+ \Big\| \int_0^t U(t-s)P_{\leq \ell}\mathcal{N}(P_{\leq \ell}u_\ell(s))\,ds \Big\|_{L^\infty_TX^1}^2 \\
&\lesssim \| \phi \|_{X^1}^2+T^2\Big( \delta^{-1}\| u_\ell \|_{L^\infty_TX^1}^{\frac32}\| \widetilde{\mathcal{E}}(u_\ell) \|_{L^\infty_T}^{\frac34}+\| u_\ell \|_{L^\infty_TX^1}^2\| u_\ell \|_{L^\infty_TX^2}\Big)^2 \\
&\lesssim \| \phi \|_{X^1}^2 +T^2\| \mathcal{E}^2_\delta(u_\ell) \|_{L^\infty_T}^2\| u_\ell \|_{L^\infty_TX^1}^2 \,.
\end{aligned}
\end{equation}
Similarly, for the energy functional we have
\begin{equation}\label{est:aprioriZ2-A'}
\begin{aligned}
\| \widetilde{\mathcal{E}}(u_\ell) \|_{L^\infty_T}
&\lesssim \| \widetilde{\mathcal{E}}(U(t)P_{\leq \ell}\phi) \|_{L^\infty_T} +\Big( \| U(t)P_{\leq \ell}\phi \|_{L^\infty_TL^\infty} + \Big\| \int_0^t U(t-s)P_{\leq \ell}\mathcal{N}(P_{\leq \ell}u_\ell)\,ds \Big\|_{L^\infty_TL^\infty}\Big) \\
&\hspace{4cm} \times \Big\| \int_0^t U(t-s)P_{\leq \ell}\mathcal{N}(P_{\leq \ell}u_\ell)\,ds \Big\|_{L^\infty_TL^2} \\
&\lesssim \widetilde{\mathcal{E}}(\phi)+\| \phi \|_{X^1}^2+\Big\{ \| \phi \|_{X^1}+T\Big( \delta^{-1}\| u_\ell \|_{L^\infty_TX^1}^{\frac32}\| \widetilde{\mathcal{E}}(u_\ell) \|_{L^\infty_T}^{\frac34}+\| u_\ell \|_{L^\infty_TX^1}^2\| u_\ell \|_{L^\infty_TX^2}\Big) \Big\} \\
&\hspace{4cm} \times T\Big( \delta^{-1}\| u_\ell \|_{L^\infty_TX^1}\| \widetilde{\mathcal{E}}(u_\ell)\|_{L^\infty_T}+\| u_\ell \|_{L^\infty_TX^1}^2\| u_\ell \|_{L^\infty_TX^2}\Big) \\
&\lesssim \widetilde{\mathcal{E}}(\phi)+\| \phi \|_{X^1}^2+ T^2\delta^{-\frac23}\| \mathcal{E}^2_\delta(u_\ell) \|_{L^\infty_T}^2\| \widetilde{\mathcal{E}}(u_\ell) \|_{L^\infty_T} +T^2\| \mathcal{E}^2_\delta(u_\ell) \|_{L^\infty_T}^2\| u_\ell \|_{L^\infty_TX^1}^2.
\end{aligned}
\end{equation}

We now estimate the highest-order component of $\mathcal{E}^2(u_\ell)$. Since $u_\ell$ is a smooth solution, one computes using equation \eqref{gINLSmoll},
$$\begin{aligned}
\frac{d}{dt}\| \partial_x^2u_\ell(t) \|_{L^2}^2 &= 2\Re \big( \partial_x^2\partial_t u_\ell(t),\partial_x^2u_\ell(t)\big)_{L^2} = 2\Re \big( {-}i \partial_x^2\mathcal{N}(P_{\leq \ell}u_\ell(t)),\partial_x^2P_{\leq \ell}u_\ell(t)\big)_{L^2}  \\
&= 2\Re \big( {-}i\partial_x^2\mathcal{N}_\delta(P_{\leq \ell}u_\ell(t)) ,\partial_x^2P_{\leq \ell}u_\ell(t)\big)_{L^2} \\
&\quad + 2\Re \big( \partial_x^2\big( P_{\leq \ell}u_\ell(t)(\alpha +i\beta H)\partial_x |P_{\leq \ell}u_\ell(t)|^2\big) ,\partial_x^2P_{\leq \ell}u_\ell(t)\big)_{L^2}.
\end{aligned} $$
The first term is harmless and bounded directly via Lemma~\ref{lem:N0N1}
$$\Big| \big( {-}i\partial_x^2\mathcal{N}_\delta(P_{\leq \ell}u_\ell(t)) ,\partial_x^2P_{\leq \ell}u_\ell(t)\big)_{L^2} \Big| \lesssim \delta^{-1} \Big( \big[ \widetilde{\mathcal{E}}(u_\ell(t))\big] ^{\frac12}\|u_\ell(t)\|_{X^1}+\| u_\ell(t) \|_{X^1}^2\Big) \| \partial_xu_\ell(t) \|_{H^1}^2. $$
For the second term, distributing the differentials allows us to safely estimate interactions where no function carries three derivatives, that is,
$$\begin{aligned}
&\Big| 2\Re \big( \partial_x^2\big( P_{\leq \ell}u_\ell(t)(\alpha +i\beta H)\partial_x |P_{\leq \ell}u_\ell(t)|^2\big) ,\partial_x^2P_{\leq \ell}u_\ell(t)\big)_{L^2} \\
&\quad -2\Re \big( P_{\leq \ell}u_\ell(t)(\alpha +i\beta H)\big(\partial_x^3P_{\leq \ell}u_\ell(t)\,\overline{P_{\leq \ell}u_\ell(t)}+P_{\leq \ell}u_\ell(t)\partial_x^3\overline{P_{\leq \ell}u_\ell(t)}\big) ,\partial_x^2P_{\leq \ell}u_\ell(t)\big)_{L^2} \Big| \\
&\lesssim \| u_\ell(t) \|_{X^2}^2\| \partial_xu_\ell(t) \|_{H^1}^2.
\end{aligned} $$Even the terms with three derivatives can be controlled via integration by parts and Lemma~\ref{lem:ibp}, except for one specific interaction
$$\begin{aligned}
&\Big| 2\Re \big( P_{\leq \ell}u_\ell(t)(\alpha +i\beta H)\big(\partial_x^3P_{\leq \ell}u_\ell(t)\,\overline{P_{\leq \ell}u_\ell(t)}+P_{\leq \ell}u_\ell(t)\partial_x^3\overline{P_{\leq \ell}u_\ell(t)}\big) ,\partial_x^2P_{\leq \ell}u_\ell(t)\big)_{L^2} \\
&\quad -2\beta\Re \big( P_{\leq \ell}u_\ell(t)(iH)\big( P_{\leq \ell}u_\ell(t)\partial_x^3\overline{P_{\leq \ell}u_\ell(t)}\big) ,\partial_x^2P_{\leq \ell}u_\ell(t)\big)_{L^2} \Big| \\
&\lesssim \| u_\ell(t) \|_{X^2}^2\| \partial_xu_\ell(t) \|_{H^1}^2.
\end{aligned} $$
Summarizing these bounds, we arrive at the differential inequality
\begin{equation}\label{est:aprioriZ2-1}
\begin{aligned}
\frac{d}{dt}\| \partial_x^2u_\ell(t) \|_{L^2}^2 &\leq 2\beta\Re \big( P_{\leq \ell}u_\ell(t)(iH)\big( P_{\leq \ell}u_\ell(t)\partial_x^3\overline{P_{\leq \ell}u_\ell(t)}\big) ,\partial_x^2P_{\leq \ell}u_\ell(t)\big)_{L^2} \\
&\quad +C\Big( \delta^{-1}\big[ \widetilde{\mathcal{E}}(u_\ell(t))\big] ^{\frac12}\| u_\ell(t) \|_{X^2}+\| u_\ell(t) \|_{X^2}^2\Big) \| \partial_xu_\ell(t) \|_{H^1}^2.\end{aligned}\end{equation}

\medskip
To handle the derivative loss arising from the first term on the right-hand side of \eqref{est:aprioriZ2-1}, we introduce a time-dependent corrector $\mathcal{I}_2(t)$ designed precisely to cancel this interaction. Letting $n_0\in \Z_{\geq 0}$ be a frequency threshold to be determined later, we define:
\begin{equation}\label{def:I2}
\mathcal{I}_2(t):=\beta\Re \big( P_{\leq \ell}u_\ell(t)H\partial_x\big(P_{\leq \ell}u_\ell(t)\partial_x\overline{P_{\geq n_0}P_{\leq \ell}u_\ell(t)}\big) ,\partial_xP_{\geq n_0}P_{\leq \ell}u_\ell(t)\big)_{L^2} .
\end{equation}
By Bernstein's inequality, this corrector satisfies the bound:
\begin{equation}\label{bd:I2}
\begin{aligned}
|\mathcal{I}_2(t)|
&\lesssim \| P_{\leq \ell}u_\ell(t) \|_{L^\infty}\| \partial_xP_{\geq n_0}P_{\leq \ell}u_\ell(t) \|_{L^2} \| P_{\leq \ell}u_\ell(t)\partial_xP_{\geq n_0}P_{\leq \ell}u_\ell(t) \|_{H^1} \\
&\lesssim 2^{-n_0}\| u_\ell(t) \|_{X^2}^2\| \partial_x^2u_\ell(t) \|_{L^2}^2.
\end{aligned}
\end{equation}
Differentiating $\mathcal{I}_2(t)$ with respect to time, applying equation \eqref{gINLSmoll}, and utilizing the symmetry property \eqref{id:Tdeltasymm} for $H\partial_x$, we obtain
$$
    \begin{aligned}
    \frac{d}{dt}\mathcal{I}_2(t)&=2\beta \Re \big( H\partial_x\big(\partial_tP_{\leq \ell}u_\ell(t)\cdot \partial_x\overline{P_{\geq n_0}P_{\leq \ell}u_\ell(t)}\big) ,\overline{P_{\leq \ell}u_\ell(t)}\partial_xP_{\geq n_0}P_{\leq \ell}u_\ell(t)\big)_{L^2} \\
    &\quad +2\beta \Re \big( H\partial_x\big(P_{\leq \ell}u_\ell(t)\cdot \partial_x\overline{P_{\geq n_0}\partial_tP_{\leq \ell}u_\ell(t)}\big) ,\overline{P_{\leq \ell}u_\ell(t)}\partial_xP_{\geq n_0}P_{\leq \ell}u_\ell(t)\big)_{L^2} \\
    &=2\beta \Re \big( iH\partial_x\big(P_{\leq \ell}u_\ell(t) \partial_x^3\overline{P_{\geq n_0}P_{\leq \ell}u_\ell(t)}\big) ,\overline{P_{\leq \ell}u_\ell(t)}\partial_xP_{\geq n_0}P_{\leq \ell}u_\ell(t)\big)_{L^2} +\mathcal{R}(t).
\end{aligned}
$$
Here, the remainder $\mathcal{R}(t)$ collects all terms with at most two derivatives on each function$$\begin{aligned}
    \mathcal{R}(t)&=-2\beta \Re\big(  {-}iH\big(\partial_x^2P_{\leq \ell}u_\ell(t)\cdot \partial_x\overline{P_{\geq n_0}P_{\leq \ell}u_\ell(t)}\big) ,\partial_x\big( \overline{P_{\leq \ell}u_\ell(t)}\partial_xP_{\geq n_0}P_{\leq \ell}u_\ell(t)\big) \big)_{L^2} \\
    &\quad +2\beta \Re \big( {-}iH\partial_x\big(P_{\leq \ell}\mathcal{N}(P_{\leq \ell}u_\ell(t))\cdot \partial_x\overline{P_{\geq n_0}P_{\leq \ell}u_\ell(t)}\big) ,\overline{P_{\leq \ell}u_\ell(t)}\partial_xP_{\geq n_0}P_{\leq \ell}u_\ell(t)\big)_{L^2} \\
    &\quad -2\beta \Re \big( iH\big(P_{\leq \ell}u_\ell(t)\cdot \partial_x\overline{P_{\geq n_0}P_{\leq \ell}\mathcal{N}(P_{\leq \ell}u_\ell(t))}\big) ,\partial_x\big( \overline{P_{\leq \ell}u_\ell(t)}\partial_xP_{\geq n_0}P_{\leq \ell}u_\ell(t)\big) \big)_{L^2} .
\end{aligned}$$
Using Lemma~\ref{lem:N0N1}, this remainder is controlled by
$$
    |\mathcal{R}(t)|\lesssim \Big( \| u_\ell(t) \|_{X^2}^2+\delta^{-1}\big[ \widetilde{\mathcal{E}}(u_\ell(t))\big] ^{\frac34}\| u_\ell(t) \|_{X^2}^{\frac52}+\| u_\ell(t) \|_{X^2}^4\Big) \| \partial_xu_\ell(t) \|_{H^1}^2. 
$$
Crucially, the main term in $\frac{d}{dt}\mathcal{I}_2(t)$ compensate the first term in \eqref{est:aprioriZ2-1}, thus by integrating by parts and applying Bernstein's inequality $\| \partial_x^3P_{\leq n_0-1}P_{\leq \ell}u_\ell \|_{L^2}\lesssim 2^{n_0}\| \partial_x^2u_\ell \|_{L^2}$, the interaction between the main terms satisfies
$$
\begin{aligned}
&\Big| 2\beta \Re \big( iH\partial_x\big(P_{\leq \ell}u_\ell(t) \partial_x^3\overline{P_{\geq n_0}P_{\leq \ell}u_\ell(t)}\big) ,\overline{P_{\leq \ell}u_\ell(t)}\partial_xP_{\geq n_0}P_{\leq \ell}u_\ell(t)\big)_{L^2} \\
&\quad + 2\beta \Re \big( iH\big(P_{\leq \ell}u_\ell(t)\partial_x^3\overline{P_{\leq \ell}u_\ell(t)}\big) ,\overline{P_{\leq \ell}u_\ell(t)}\partial_x^2P_{\leq \ell}u_\ell(t)\big)_{L^2} \Big| \\
&\leq \Big| 2\beta \Re \big( iH\big(P_{\leq \ell}u_\ell(t)\partial_x^3\overline{P_{\geq n_0}P_{\leq \ell}u_\ell(t)}\big) ,\partial_x\overline{P_{\leq \ell}u_\ell(t)}\cdot \partial_xP_{\geq n_0}P_{\leq \ell}u_\ell(t)\big)_{L^2} \Big| \\
&\quad +\Big| 2\beta \Re \big( iH\big(P_{\leq \ell}u_\ell(t)\partial_x^3\overline{P_{\geq n_0}P_{\leq \ell}u_\ell(t)}\big) ,\overline{P_{\leq \ell}u_\ell(t)}\partial_x^2P_{\geq n_0}P_{\leq \ell}u_\ell(t)\big)_{L^2} \\
&\quad\quad - 2\beta \Re \big( iH\big(P_{\leq \ell}u_\ell(t)\partial_x^3\overline{P_{\leq \ell}u_\ell(t)}\big) ,\overline{P_{\leq \ell}u_\ell(t)}\partial_x^2P_{\leq \ell}u_\ell(t)\big)_{L^2} \Big| \\
&\lesssim (1+2^{n_0})\| u_\ell(t) \|_{X^2}^2\| \partial_xu_\ell(t) \|_{H^1}^2.
\end{aligned}$$
This yields the final bound for the corrector derivative
\begin{equation}\label{est:aprioriZ2-2}
\begin{aligned}
\frac{d}{dt}\mathcal{I}_2(t)&\leq - 2\beta \Re \big( iH\big(P_{\leq \ell}u_\ell(t)\partial_x^3\overline{P_{\leq \ell}u_\ell(t)}\big) ,\overline{P_{\leq \ell}u_\ell(t)}\partial_x^2P_{\leq \ell}u_\ell(t)\big)_{L^2} \\
&\quad +C\Big( 2^{n_0}\| u_\ell(t) \|_{X^2}^2+ \delta^{-1}\big[ \widetilde{\mathcal{E}}(u_\ell(t))\big] ^{\frac34}\| u_\ell(t) \|_{X^2}^{\frac52}+\| u_\ell(t) \|_{X^2}^4\Big) \| \partial_xu_\ell(t) \|_{H^1}^2.
\end{aligned}
\end{equation}

Combining \eqref{est:aprioriZ2-1} and \eqref{est:aprioriZ2-2}, we obtain the modified energy inequality,
$$\begin{aligned}
\frac{d}{dt}\big( \| \partial_x^2u_\ell(t) \|_{L^2}^2+\mathcal{I}_2(t)\big) &\lesssim 2^{n_0}\| u_\ell(t) \|_{X^2}^4+\| u_\ell(t) \|_{X^2}^6 \\
&\quad +\delta^{-1}\big[ \widetilde{\mathcal{E}}(u_\ell(t))\big] ^{\frac12}\| u_\ell(t) \|_{X^2}^3+\delta^{-1}\big[ \widetilde{\mathcal{E}}(u_\ell(t))\big] ^{\frac34}\| u_\ell(t) \|_{X^2}^{\frac92}.
\end{aligned}$$
Integrating this over $[0, t]$ and applying the corrector bound \eqref{bd:I2}, we obtain
\begin{equation}\label{est:aprioriZ2-C}
\begin{aligned}
\| \partial_x^2u_\ell \|_{L^\infty_TL^2}^2
&\leq \| \partial_x^2\phi \|_{L^2}^2+C
\Big( \begin{aligned}[t]
&(2^{-n_0}+T2^{n_0})\| u_\ell \|_{L^\infty_TX^2}^4+T\| u_\ell \|_{L^\infty_TX^2}^6 \\
&+T\delta^{-1}\| \widetilde{\mathcal{E}}(u_\ell) \|_{L^\infty_T}^{\frac12}\| u_\ell \|_{L^\infty_TX^2}^3+T\delta^{-1}\| \widetilde{\mathcal{E}}(u_\ell) \|_{L^\infty_T}^{\frac34}\| u_\ell \|_{L^\infty_TX^2}^{\frac92}\Big)
\end{aligned} \\
&\lesssim \| \partial_x^2\phi \|_{L^2}^2+\Big( (2^{-n_0}+T2^{n_0})\| \mathcal{E}^2_\delta(u_\ell) \|_{L^\infty_T}+T\| \mathcal{E}^2_\delta(u_\ell) \|_{L^\infty_T}^2\Big) \| u_\ell \|_{L^\infty_TX^2}^2.
\end{aligned}
\end{equation}
Putting \eqref{est:aprioriZ2-A}, \eqref{est:aprioriZ2-A'}, and \eqref{est:aprioriZ2-C} together,  yields the full energy  bound
\begin{equation}\label{est:aprioriZ2-D}
\| \mathcal{E}^2_\delta(u_\ell) \|_{L^\infty_T}\lesssim \mathcal{E}^2_\delta(\phi) +\Big( (2^{-n_0}+T2^{n_0}) \| \mathcal{E}^2_\delta(u_\ell) \|_{L^\infty_T} +T\| \mathcal{E}^2_\delta(u_\ell) \|_{L^\infty_T}^2\Big) \|\mathcal{E}^2_\delta(u_\ell) \|_{L^\infty_T} .
\end{equation}
To prove the main bounds, we select the frequency threshold $2^{n_0}=C(1+M_0^2)$ for a large constant $C>0$ so that $2^{-n_0}\mathcal{E}^2_\delta(\phi)\leq 2^{-n_0}M_0^2\ll 1$. Then, we choose $T_0$ sufficiently small so that $T_0(1+M_0^4)\ll 1$. A standard continuity argument applied to \eqref{est:aprioriZ2-D} then forces
$$\| \mathcal{E}^2_\delta(u_\ell) \|_{L^\infty_T}\lesssim \mathcal{E}^2_\delta(\phi),\qquad 0<T<\min \{ T_0,T_\ell\}. $$
By the blow-up criterion in Proposition~\ref{prop:LWPmoll}, this guarantees $T_\ell>T_0$ and establishes \eqref{est:aprioriZ2}. Plugging this back into \eqref{est:aprioriZ2-A} and \eqref{est:aprioriZ2-A'} (modifying $T_0$ if necessary) immediately yields both of the lower-order estimates listed in \eqref{est:aprioriZ2'}.

Finally, the bounds for $k \geq 3$ in \eqref{est:aprioriZk} follow from an identical modified energy approach. For the highest component $\| \partial_x^ku_\ell(t) \|_{L^2}$, one uses Lemmas~\ref{lem:N0N1}, \ref{lem:ibp}, and interpolation to derive
\begin{equation}\label{est:aprioriZk-1}
\begin{aligned}
\frac{d}{dt}\| \partial_x^ku_\ell(t) \|_{L^2}^2 &\leq 2\beta\Re \big( P_{\leq \ell}u_\ell(t)(iH)\big( P_{\leq \ell}u_\ell(t)\partial_x^{k+1}\overline{P_{\leq \ell}u_\ell(t)}\big) ,\partial_x^kP_{\leq \ell}u_\ell(t)\big)_{L^2} \\
&\quad +C_k\Big( \delta^{-1}\big[ \widetilde{\mathcal{E}}(u_\ell(t))\big] ^{\frac12}\| u_\ell(t) \|_{X^2}+\| u_\ell(t) \|_{X^2}^2\Big)\| \partial_xu_\ell(t) \|_{H^{k-1}}^2.
\end{aligned}
\end{equation}

Constructing the $k$-th order corrector 
$$
    \mathcal{I}_k(t) := \beta\Re \big( P_{\leq \ell}u_\ell(t)H\partial_x\big(P_{\leq \ell}u_\ell(t)\partial_x^{k-1}\overline{P_{\geq n_1}P_{\leq \ell}u_\ell(t)}\big) ,\partial_x^{k-1}P_{\geq n_1}P_{\leq \ell}u_\ell(t)\big)_{L^2}
$$
for $n_1 \gg 1$ yields corresponding derivative cancellations and bounds analogous to $\mathcal{I}_2(t)$:
\begin{align*}
|\mathcal{I}_k(t)|
&\lesssim_k 2^{-n_1}\| u_\ell(t) \|_{X^2}^2\| \partial_x^k u_\ell(t) \|_{L^2}^2, \\
\frac{d}{dt}\mathcal{I}_k(t)&=- 2\beta \Re \big( iH\big(P_{\leq \ell}u_\ell(t)\partial_x^{k+1}\overline{P_{\leq \ell}u_\ell(t)}\big) ,\overline{P_{\leq \ell}u_\ell(t)}\partial_x^kP_{\leq \ell}u_\ell(t)\big)_{L^2} \\
&\quad +C_k\Big\{ 2^{n_1}\| u_\ell(t)\|_{X^2}^2+ \delta^{-1}\big[ \widetilde{\mathcal{E}}(u_\ell(t))\big] ^{\frac34}\| u_\ell(t)\|_{X^2}^{\frac52}+\| u_\ell(t)\|_{X^2}^4\Big\} \| \partial_xu_\ell(t)\|_{H^{k-1}}^2
\end{align*}
in analogy with \eqref{bd:I2} and \eqref{est:aprioriZ2-2}.
Setting $2^{n_1} = C(1+M_0^2)$ with $C\gg _k 1$ allows us to deduce $|\mathcal{I}_k(t)| \leq \frac{1}{2} \| \partial_x^k u_\ell(t) \|_{L^2}^2$ for $t\in[0, T_0]$ and the modified energy inequality
$$\frac{d}{dt}\big( \| \partial_x^ku_\ell(t) \|_{L^2}^2+\mathcal{I}_k(t)\big) \lesssim_k (M_0^2+M_0^4)\| \partial_xu_\ell(t) \|_{H^{k-1}}^2,\qquad t\in [0,T_0]. $$Since \eqref{est:aprioriZ2'} ensures $\| \partial_xu_\ell(t) \|_{H^{k-1}}^2 \lesssim \| \phi \|_{X^1}^2 + \| \partial_x^ku_\ell(t) \|_{L^2}^2$, the functional $X_k(t) := \| \partial_x^ku_\ell(t) \|_{L^2}^2+\mathcal{I}_k(t)+\| \phi \|_{X^1}^2$ satisfies $\frac{d}{dt}X_k(t) \leq C_k(M_0^2+M_0^4)X_k(t)$. An application of Gronwall's inequality completes the proof of \eqref{est:aprioriZk}.
\end{proof}


\begin{proposition}[$\mathcal{Z}^1$ difference estimate]\label{prop:apriori-d1}
Let $\delta\geq 1$, $M_0>0$, and let $\phi, \psi \in \mathcal{Z}^2$ satisfy $\mathcal{E}^2_\delta(\phi) \leq M_0^2$ and $\mathcal{E}^2_\delta(\psi)\leq M_0^2$. 
 For $\ell,m\geq 0$, let $u_\ell,v_m\in C([0,T_0];\mathcal{Z}^\infty)$ be the unique solutions to the mollified problem \eqref{gINLSmoll} with initial data $P_{\leq \ell}\phi$ and $P_{\leq m}\psi$, respectively.
Then, there exists $T_*\in (0,T_0]$ with $T_*\gtrsim (1+M_0^4)^{-1}$ such that we have
\begin{equation}\label{est:apriorid1}
\| d^1_\delta(u_{\ell},v_{m})\|_{L_{T_*}^{\infty}} \lesssim (1+M_0)d^1_\delta(\phi,\psi) +(1+M_0^2)(2^{-\frac{1}{2}\ell} +2^{-\frac{1}{2}m}).
\end{equation}
In particular, if $\phi=\psi$, then  
\begin{equation}\label{est:apriorid1'}
\| d^1_\delta(u_{\ell}, u_{m}) \|_{L_{T_*}^{\infty}} \lesssim (1+M_0^2)(2^{-\frac{1}{2}\ell} +2^{-\frac{1}{2}m}). 
\end{equation}
\end{proposition}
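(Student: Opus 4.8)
The plan is to estimate the three constituents of the distance $d^1_\rho(u^{(\ell)},v^{(m)})$ separately: the two non-Sobolev pieces $\|u^{(\ell)}-v^{(m)}\|_{L^\infty}$ and $\big\||u^{(\ell)}|^2-|v^{(m)}|^2\big\|_{L^2}$ directly from the Duhamel representation, and the Sobolev piece $\|\partial_x(u^{(\ell)}-v^{(m)})\|_{L^2}$ by a modified energy estimate. Write $w:=u^{(\ell)}-v^{(m)}$, $\zeta_\ell:=P_{\leq\ell}u^{(\ell)}$, $\eta_m:=P_{\leq m}v^{(m)}$, and recall from Proposition~\ref{prop:apriori-Zk} the uniform bound $\|\mathcal{E}^2_\rho(u^{(\ell)})\|_{L^\infty_{T_0}}+\|\mathcal{E}^2_\rho(v^{(m)})\|_{L^\infty_{T_0}}\lesssim M_0^2$, which lets us treat the individual solutions (and, via \eqref{est:EkPl}, their mollifications $\zeta_\ell,\eta_m$) as functions bounded in $\mathcal{Z}^2_\rho$; in particular $\|u^{(\ell)}-\zeta_\ell\|_{L^2}+\|v^{(m)}-\eta_m\|_{L^2}\lesssim(2^{-\ell}+2^{-m})M_0$. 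Subtracting two copies of \eqref{gINLSmoll-int} gives
\[ w(t)=U(t)\big(P_{\leq\ell}\phi-P_{\leq m}\psi\big)+\Phi^{(\ell)}[u^{(\ell)}](t)-\Phi^{(m)}[v^{(m)}](t). \]

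For the two non-Sobolev pieces I would apply \eqref{est:d1f+g} to this identity: the linear term is handled by \eqref{est:d-Ut} together with \eqref{est:d1PlPm}, which is precisely where the initial mollification mismatch produces the loss $2^{-\frac12\ell}+2^{-\frac12 m}$, while the Duhamel term is handled by Minkowski's inequality in time, unitarity of $U(t)$, and the difference bounds of Lemma~\ref{lem:N0N1}(iii). The only structural care lies in rewriting the integrand as
\[ P_{\leq\ell}\mathcal{N}(\zeta_\ell)-P_{\leq m}\mathcal{N}(\eta_m)=P_{\leq\ell}\big[\mathcal{N}(\zeta_\ell)-\mathcal{N}(\eta_m)\big]+\big(P_{\leq\ell}-P_{\leq m}\big)\mathcal{N}(\eta_m), \]
whose first ``matched'' piece is estimated by Lemma~\ref{lem:N0N1}(iii) against $d^1_\rho(\zeta_\ell,\eta_m)$, controlled in turn by $d^1_\rho(u^{(\ell)},v^{(m)})+2^{-\frac12\ell}+2^{-\frac12 m}$, and whose second ``mismatch'' piece carries a gain: since $\eta_m$ is band-limited, $\mathcal{N}(\eta_m)$ has Fourier support in $\{|\xi|\lesssim 2^m\}$ and is bounded in $H^1$ by the $\mathcal{Z}^2_\rho$ norm via Lemma~\ref{lem:N0N1}, so the frequency-difference projection contributes $O\big(2^{-\min(\ell,m)}\big)$.

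The heart of the matter is the Sobolev piece. Differentiating $\|\partial_x w\|_{L^2}^2$ and substituting \eqref{gINLSmoll} for $\partial_t w$, the purely dispersive contribution cancels after integration by parts, leaving
\[ \frac{d}{dt}\|\partial_x w\|_{L^2}^2=2\Re\big({-}i\,\partial_x\big[P_{\leq\ell}\mathcal{N}(\zeta_\ell)-P_{\leq m}\mathcal{N}(\eta_m)\big],\partial_x w\big)_{L^2}. \]
The $\mathcal{N}_\delta$ contribution is harmless by Lemma~\ref{lem:N0N1}, and the mismatch piece is controlled by integrating by parts to place the surplus derivative on $w$---whose second derivative is uniformly bounded by $M_0$ via Proposition~\ref{prop:apriori-Zk}---so that the band-limited factor again yields a gain $2^{-\min(\ell,m)}$. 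Expanding $\mathcal{N}_\infty(\zeta_\ell)-\mathcal{N}_\infty(\eta_m)$ around the difference and distributing derivatives, every resulting term is disposed of by integration by parts and Lemma~\ref{lem:ibp}, with the single exception of the interaction of the schematic form $2\beta\Re\big(\zeta_\ell(iH)(\zeta_\ell\,\partial_x\overline{g}),g\big)_{L^2}$ with $g=\partial_x w$ (the coefficients being solution factors, which may be replaced by one another up to controllable $d^1_\rho$-errors). This is exactly the term flagged in the remark after Lemma~\ref{lem:ibp}, for which no integration-by-parts bound is available; it is the main obstacle. A crude bound using only the $\mathcal{Z}^2_\rho$ control of the solutions is merely linear in $\|\partial_x w\|_{L^2}$ with an $O(M_0^3)$ coefficient, which over a time $T_*\sim M_0^{-4}$ would leave a non-decaying residue and destroy the Cauchy property.

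To remove this term I would introduce a frequency-localized corrector, modeled on \eqref{def:I2} but one derivative lower and quadratic in the difference,
\[ \mathcal{I}_1(t):=\beta\Re\big(\zeta_\ell\,H\big(\zeta_\ell\,\overline{P_{\geq n}w}\big),\,\partial_x P_{\geq n}w\big)_{L^2}, \]
for a large threshold $n$ to be fixed. As in \eqref{bd:I2} one gets $|\mathcal{I}_1(t)|\lesssim 2^{-n}M_0^2\|\partial_x w\|_{L^2}^2$, so $\mathcal{I}_1$ is an admissible perturbation of the energy. Differentiating it, substituting \eqref{gINLSmoll} for $\partial_t w$, and using the symmetry \eqref{id:Tdeltasymm} of $H\partial_x$ to move the Hilbert transform onto the energy factor, the principal term (where $\partial_t$ falls on $\overline{P_{\geq n}w}$ and is replaced by $i\partial_x^2$) exactly cancels the high-frequency part $P_{\geq n}$ of the problematic interaction, while its low-frequency part $P_{\leq n-1}$ is controlled by Bernstein's inequality at the cost of a factor $2^{n}$, just as in the passage to \eqref{est:aprioriZ2-2}; all remaining terms (where $\partial_t$ falls on a solution coefficient, producing either $\partial_x^2\zeta_\ell$ or a nonlinearity $\mathcal{N}(\zeta_\ell)$) collect into a remainder bounded via Lemma~\ref{lem:N0N1} by $(1+\delta^{-1})(1+M_0^2)\,[d^1_\rho(u^{(\ell)},v^{(m)})]^2$ plus the $2^{-\frac12\ell}+2^{-\frac12 m}$ mismatch. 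Adding $\|\partial_x w\|_{L^2}^2+\mathcal{I}_1$ to the already-estimated non-Sobolev pieces and integrating in time yields an inequality of the form
\[ \big[d^1_\rho(u^{(\ell)},v^{(m)})(t)\big]^2\lesssim (1+M_0^2)\,\big[d^1_\rho(\phi,\psi)\big]^2+(1+M_0^6)\big(2^{-\ell}+2^{-m}\big)+C(1+\delta^{-1})(1+M_0^4)\!\int_0^t\!\big[d^1_\rho(u^{(\ell)},v^{(m)})(s)\big]^2\,ds. \]
Choosing $2^{n}\sim 1+M_0^2$ to absorb the corrector into the left-hand side and $T_*\in(0,T_0]$ with $T_*(1+\delta^{-1})(1+M_0^4)\ll1$ (which gives the asserted lower bound on $T_*$), a Gronwall/continuity argument closes the estimate and, upon taking square roots, yields \eqref{est:apriorid1}; setting $\phi=\psi$ gives \eqref{est:apriorid1'}.
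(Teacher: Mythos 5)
Your proposal is correct and takes essentially the same route as the paper's proof: Duhamel-based estimates for the non-Sobolev components of $d^1_\rho$, an $\dot H^1$ energy estimate in which the only uncontrollable interaction is exactly the one flagged in the remark after Lemma~\ref{lem:ibp}, and a frequency-localized corrector quadratic in the difference (the paper's $\mathcal{J}$, built from the symmetrized coefficient $\mu_{\ell,m}$ and the mollified difference $\omega_{\ell,m}$ rather than your $\zeta_\ell$ and $w$) whose time derivative cancels the high-frequency part of that term, followed by the choice $2^{n}\sim 1+M_0^2$ and absorption on a time interval with $T_*(1+\delta^{-1})(1+M_0^4)\ll 1$. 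The residual discrepancies---your derivative placement in the corrector, working with $w$ versus $\omega_{\ell,m}$, and Gronwall versus direct absorption---are equivalent up to integration by parts and the band-limited mismatch estimates you already invoke.
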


\begin{proof}
Take an arbitrary $T\in (0,T_0]$ such that $T(1+M_0^4)\ll 1$. Recall from Proposition~\ref{prop:apriori-Zk} and the properties of the Littlewood-Paley projectors, we have the uniform a priori bounds
\begin{equation}\label{est:apriorid1-1}
\| \mathcal{E}^2_\delta(P_{\leq \ell}u_\ell)\|_{L^\infty_T}+\| \mathcal{E}^2_\delta(P_{\leq m}v_m)\|_{L^\infty_T} \lesssim M_0^2. 
\end{equation}

We begin by controlling the $d^1_\delta$ metric for the lower frequencies. 
By the definition in \eqref{def:d1}, we must estimate $\|u_\ell - v_m\|_{X^1}$ and $\delta^{-4/3}\widetilde{d}(u_\ell, v_m)$. 
We recall from \eqref{gINLSmoll-int} and \eqref{def:N0N1},
\[ u_\ell(t) = U(t)P_{\leq \ell}\phi - i \int_0^t U(t-s) P_{\leq \ell} \mathcal{N}_{\delta}(P_{\leq \ell}u_\ell(s)) \, ds - i \int_0^t U(t-s) P_{\leq \ell} \mathcal{N}_{\infty}(P_{\leq \ell}u_\ell(s)) \, ds. \]
Hence, we have
\begin{align*}
\| u_\ell -v_m\|_{X^1}&\lesssim \| U(t)\{ P_{\leq \ell}\phi -P_{\leq m}\psi \}\|_{X^1}+\Big\| \int_0^tU(t-s)\{ P_{\leq\ell}\mathcal{N}_\delta(P_{\leq \ell}u_\ell(s))-P_{\leq m}\mathcal{N}_\delta(P_{\leq m}v_m(s))\} \,ds\Big\|_{X^1} \\
&\quad +\| \partial_x(u_\ell-v_m)\|_{L^2}+\Big\| \int_0^tU(t-s)\{ P_{\leq\ell}\mathcal{N}_\infty(P_{\leq \ell}u_\ell(s))-P_{\leq m}\mathcal{N}_\infty(P_{\leq m}v_m(s))\} \,ds\Big\|_{L^2},
\end{align*}
where we have used the embedding $H^1(\mathbb{R})\hookrightarrow X^1$ and the integral equation to get rid of the $\dot{H}^1$-norm for the last term.
Invoking the linear bound \eqref{est:UtEk}, we obtain
\begin{equation}\label{est:d1-X1}
\begin{aligned}
\| u_\ell -v_m\|_{L^\infty_TX^1}&\lesssim \| P_{\leq \ell}\phi -P_{\leq m}\psi \|_{X^1}+T\| P_{\leq\ell}\mathcal{N}_\delta(P_{\leq \ell}u_\ell)-P_{\leq m}\mathcal{N}_\delta(P_{\leq m}v_m)\|_{L^\infty_TX^1}\\
&\quad +\| \partial_x(u_\ell-v_m)\|_{L^\infty_TL^2}+T\| P_{\leq\ell}\mathcal{N}_\infty(P_{\leq \ell}u_\ell)-P_{\leq m}\mathcal{N}_\infty(P_{\leq m}v_m)\|_{L^\infty_TL^2}.
\end{aligned}
\end{equation}
A similar argument with the metric inequality \eqref{est:d1f+g} implies
\begin{equation}\label{est:d1-d}
\begin{aligned}
\|\widetilde{d}(u_\ell,v_m)\|_{L^\infty_T}&\lesssim \widetilde{d}(P_{\leq \ell}\phi,P_{\leq m}\psi)+\big( 1+T\| \mathcal{N}(P_{\leq \ell}u_\ell)\|_{L^\infty_TL^2}\big) \| P_{\leq \ell}\phi -P_{\leq m}\psi\|_{X^1}\\
&\quad +\big( \| P_{\leq m}\psi\|_{X^1}+T\| \mathcal{N}(P_{\leq \ell}u_\ell)\|_{L^\infty_TX^1}+T\| \mathcal{N}(P_{\leq m}v_m)\|_{L^\infty_TX^1}\big) \\
&\qquad\qquad \times T\| P_{\leq\ell}\mathcal{N}(P_{\leq \ell}u_\ell)-P_{\leq m}\mathcal{N}(P_{\leq m}v_m)\|_{L^\infty_TL^2}.
\end{aligned}
\end{equation}
We now evaluate the nonlinear terms using the bounds from Lemma~\ref{lem:N0N1}, the uniform bounds \eqref{est:apriorid1-1} and the tail estimates such as $\| P_{\leq \ell}f-f\|_{L^2}\lesssim 2^{-\ell}\| \partial_xf\|_{L^2}$.
For instance, the following bounds are available:
\begin{align*}
&\| \mathcal{N}(P_{\leq \ell} u_\ell)\|_{L^\infty_TX^1}+\| \mathcal{N}(P_{\leq m}v_m)\|_{L^\infty_TX^1}\lesssim M_0^3,\\
&\| P_{\leq\ell}\mathcal{N}(P_{\leq \ell}u_\ell)-P_{\leq m}\mathcal{N}(P_{\leq m}v_m)\|_{L^\infty_TL^2}\\
&\quad \lesssim M_0^3(2^{-\ell}+2^{-m})+\delta^{\frac13}\big( M_0^2\| P_{\leq \ell}u_\ell-P_{\leq m}v_m\|_{L^\infty_TX^1}+M_0\| \delta^{-\frac43} \widetilde{d}(P_{\leq \ell}u_\ell ,P_{\leq m}v_m)\|_{L^\infty_T}\big) .
\end{align*}
Plugging these bounds into \eqref{est:d1-X1}--\eqref{est:d1-d} and invoking \eqref{est:d1PlPm}, we obtain
\begin{align*}
\| d^1_\delta(u_\ell,v_m)\|_{L^\infty_T} &\lesssim \| \partial_x(u_\ell-v_m)\|_{L^\infty_TL^2} +(1+M_0+TM_0^3)\| \phi-\psi\|_{X^1}+\delta^{-\frac43}\widetilde{d}(\phi,\psi) \\
&\quad +(1+M_0+TM_0^3)(M_0+TM_0^3)(2^{-\ell}+2^{-m}) \\
&\quad +(1+M_0+TM_0^3)T(M_0+M_0^2) \| d^1_\delta(u_\ell,v_m)\|_{L^\infty_T}.
\end{align*}
Imposing smallness on the time step (here, $T(1+M_0^3)\ll 1$ is sufficient), we can absorb the last term into the left-hand side, leaving us with
\begin{equation}\label{est:apriorid1-A}
\| d^1_\delta(u_\ell,v_m)\|_{L^\infty_T} \lesssim (1+M_0)d^1_\delta(\phi,\psi)+(2^{-\ell}+2^{-m})(1+M_0^2) +\| \partial_x(u_\ell-v_m)\|_{L^\infty_TL^2}.
\end{equation}

It remains to estimate the last term in \eqref{est:apriorid1-A}. Differentiating the $L^2$ norm of the difference and applying the equation \eqref{gINLSmoll}, we have
\[
\frac{d}{dt}\big\| \partial_x\big( u_\ell(t)-v_m(t)\big) \big\|_{L^2}^2 = 2\Re \big( {-}i\partial_x\big( P_{\leq \ell}\mathcal{N}(P_{\leq \ell}u_\ell(t))-P_{\leq m}\mathcal{N}(P_{\leq m}v_m(t))\big) , \partial_x\big( u_\ell(t)-v_m(t)\big) \big)_{L^2}.
\]
By using Bernstein's inequality, integration by parts, Lemma~\ref{lem:N0N1}, and the bounds in \eqref{est:apriorid1-1}, we can shift the frequency projections to act entirely on the difference, generating a harmless commutator remainder
\begin{align*}
    \frac{d}{dt}\big\| \partial_x\big( u_\ell(t)-v_m(t)\big) \big\|_{L^2}^2 
    &= 2\Re \big( {-}i\partial_x\big( \mathcal{N}(P_{\leq \ell}u_\ell(t))-\mathcal{N}(P_{\leq m}v_m(t))\big) , \partial_x\big( P_{\leq \ell}u_\ell(t)-P_{\leq m}v_m(t)\big) \big)_{L^2} \\
    & +\mathcal{R}_1(t)
\end{align*} 
where the remainder satisfies $|\mathcal{R}_1(t)|\lesssim (2^{-\ell}+2^{-m})M_0^4$. To treat the nonlinear difference, we expand it algebraically to isolate the problematic derivatives
\begin{align*}
    \mathcal{N}(P_{\leq \ell}u_\ell)-\mathcal{N}(P_{\leq m}v_m)
    &=\mathcal{N}_\delta(P_{\leq \ell}u_\ell)-\mathcal{N}_\delta(P_{\leq m}v_m)  \\
    &\quad +\frac{i}2 (P_{\leq \ell}u_\ell-P_{\leq m}v_m) (\alpha +i\beta H)\partial_x (|P_{\leq \ell}u_\ell|^2+|P_{\leq m}v_m|^2) \\
    &\quad +i\tfrac{P_{\leq \ell}u_\ell+P_{\leq m}v_m}{2}(\alpha +i\beta H)\partial_x\left(\overline{\tfrac{P_{\leq \ell}u_\ell+P_{\leq m}v_m}{2}}(P_{\leq \ell}u_\ell-P_{\leq m}v_m)\right) \\
    &\quad+i\tfrac{P_{\leq \ell}u_\ell+P_{\leq m}v_m}{2}(\alpha +i\beta H)\partial_x \left(\tfrac{P_{\leq \ell}u_\ell+P_{\leq m}v_m}{2}\overline{(P_{\leq \ell}u_\ell-P_{\leq m}v_m)}\right) ,
\end{align*} 
where $\mathcal{N}_\delta(\cdot)$ is defined in \eqref{def:N0N1}. Applying Lemma~\ref{lem:N0N1} and Lemma~\ref{lem:ibp}, we control
\[ \| \partial_x(\mathcal{N}_\delta(P_{\leq \ell}u_\ell)-\mathcal{N}_\delta(P_{\leq m}v_m))\|_{L^\infty_TL^2}\| \partial_x(P_{\leq \ell}u_\ell-P_{\leq m}v_m)\|_{L^\infty_TL^2} \lesssim (M_0+M_0^2)\|  d^1_\delta(P_{\leq \ell}u_\ell,P_{\leq m}v_m)\|_{L^\infty_T}^2 .\]
 For the remaining terms, we use integration by parts to distribute derivatives. Most interactions are bounded by $M_0^2\| P_{\leq \ell}u_\ell-P_{\leq m}v_m\|_{X^1}^2$ thanks to Lemma~\ref{lem:ibp} and \eqref{est:apriorid1-1}. The only term that cannot be bounded directly is the one involving $\beta H$ and two spatial derivatives on the difference. Isolating this interaction, we obtain 
\begin{equation}\label{est:apriorid1-2}
\begin{aligned}
\frac{d}{dt}\big\| \partial_x\big( u_\ell(t)-v_m(t)\big) \big\|_{L^2}^2
&\leq 2\beta \Re \Big( \tfrac{P_{\leq \ell}u_\ell+P_{\leq m}v_m}{2}(iH)\Big(\tfrac{P_{\leq \ell}u_\ell+P_{\leq m}v_m}{2}\partial_x^2\overline{(P_{\leq \ell}u_\ell-P_{\leq m}v_m)}\Big) , \\
&\hspace{2cm} \partial_x\big(P_{\leq \ell}u_\ell-P_{\leq m}v_m\big) \Big)_{L^2} \\
&\quad +C(2^{-\ell}+2^{-m})M_0^4+C(1+M_0^2)\| d^1_\delta(P_{\leq \ell}u_\ell,P_{\leq m}v_m)\|_{L^\infty_T}^2.
\end{aligned}
\end{equation}

Since the first term on the right-hand side of \eqref{est:apriorid1-2} prevents closing the estimate, we introduce a time-dependent corrector $\mathcal{I}(t)$ for a frequency threshold $n_2\in \Z_{\geq 0}$
\[ 
\mathcal{I}(t):=\beta \Re \Big( \tfrac{P_{\leq \ell}u_\ell+P_{\leq m}v_m}{2}H\partial_x \Big(\tfrac{P_{\leq \ell}u_\ell+P_{\leq m}v_m}{2}\overline{P_{\geq n_2}\big(P_{\leq \ell}u_\ell-P_{\leq m}v_m\big)}\Big) , P_{\geq n_2}\big(P_{\leq \ell}u_\ell-P_{\leq m}v_m\big) \Big)_{L^2} .
\]
Differentiating $\mathcal{I}(t)$
\begin{equation}\label{est:apriorid1-3}
\begin{aligned}
    \frac{d}{dt}\mathcal{I}(t)
    &= -2\beta \Re \bigg( iH\Big(\tfrac{P_{\leq \ell}u_\ell+P_{\leq m}v_m}{2} \partial_x^2\overline{(P_{\leq \ell}u_\ell - P_{\leq m}v_m)}\Big) , \overline{\tfrac{P_{\leq \ell}u_\ell+P_{\leq m}v_m}{2}}\partial_x(P_{\leq \ell}u_\ell - P_{\leq m}v_m)\bigg)_{L^2} \\
    &\quad + \mathcal{R}_2(t),
\end{aligned}
\end{equation}
where 
\[ |\mathcal{R}_2(t)| \lesssim (2^{-\ell}+2^{-m})M_0^6 + \big( (1+2^{n_2})M_0^2+M_0^4\big) \| d^1_\delta(P_{\leq \ell}u_\ell,P_{\leq m}v_m)\|_{L^\infty_T}^2. \] 
Combining this with \eqref{est:apriorid1-2} yields
\begin{equation}\label{est:apriorid1-5}
\begin{aligned}
\frac{d}{dt}\Big( \big\| \partial_x\big( u_\ell(t)-v_m(t)\big) \big\|_{L^2}^2 +\mathcal{I}(t)\Big) &\lesssim (2^{-\ell}+2^{-m})(M_0^4+M_0^6) \\
&\quad +(1+2^{n_2}M_0^2+M_0^4) \| d^1_\delta(P_{\leq \ell}u_\ell,P_{\leq m}v_m)\|_{L^\infty_T}^2.
\end{aligned} 
\end{equation}
Integrating \eqref{est:apriorid1-5} over $[0, t]$ and utilizing the bound $\| \mathcal{I}(t)\|_{L^\infty_T}\lesssim 2^{-n_2}M_0^2\| P_{\leq \ell}u_\ell-P_{\leq m}v_m\|_{L^\infty_TX^1}^2$, we deduce
\begin{align}\label{est:apriorid1-B}
\big\| \partial_x\big( u_\ell-v_m\big) \big\|_{L^\infty_TL^2}^2
&\lesssim \Big( \|\partial_x(\phi-\psi)\|_{L^2}+(2^{-\ell}+2^{-m})M_0\Big) ^2 + (2^{-\ell}+2^{-m})T(M_0^4+M_0^6) \\
&\quad +\Big\{ T(1+2^{n_2}M_0^2+M_0^4) + 2^{-n_2}M_0^2\Big\} \Big( \| d^1_\delta(u_\ell,v_m)\|_{L^\infty_T}+(2^{-\ell}+2^{-m})(M_0+M_0^2)\Big)^2.\notag
\end{align}
Substituting this into \eqref{est:apriorid1-A}, we set $2^{n_2}=C(1+M_0^2)$ and $T \ll (1+M_0^4)^{-1}$. 
Absorbing the difference term into the left-hand side establishes the claimed bound \eqref{est:apriorid1}.
\end{proof}


\medskip
At this stage, we are now ready to complete the proof of Theorem~\ref{thm:LWP}.

\begin{proof}[Proof of Theorem \ref{thm:LWP}]
Let $\phi\in \mathcal{Z}^2$ satisfy $\mathcal{E}^2_\delta(\phi)\leq M_0^2$, and let $T_*>0$ be the constant given in Proposition~\ref{prop:apriori-d1}.
We divide the proof into several steps.

\medskip
\noindent \textbf{Step 1.} 
\textit{We first construct a local solution $u\in C([0,T_*];\mathcal{Z}^1)\cap L^\infty(0,T_*;\mathcal{Z}^2)$ to \eqref{gINLS} with initial data $\phi$ such that $u\in C([0,T_*];\mathcal{Z}^1_{\rho_\phi})$ and
\begin{align}
\big\| \mathcal{E}^2_\delta(u)\big\|_{L^\infty_{T_*}}&\lesssim \mathcal{E}^2_\delta(\phi), \label{bd:E2} \\
\| \widetilde{\mathcal{E}}(u)\|_{L^\infty_{T_*}}&\lesssim \widetilde{\mathcal{E}}(\phi) +\| \phi\|_{X^1}^2. \label{bd:Eti}
\end{align}
We also show that if $\phi\in \mathcal{Z}^k$ for some integer $k\geq 3$, then $u\in L^\infty(0,T_*;\mathcal{Z}^k)$ and satisfies
\begin{equation}\label{bd:Ek}
\| u\|_{L^\infty_{T_*}X^k}\lesssim_k \| \phi\|_{X^k}.
\end{equation}}

Let $\{ u_\ell\}_{\ell\geq 0}$ be the sequence of solutions to the mollified problem \eqref{gINLSmoll} with initial data $u_\ell(0)=P_{\leq \ell}\phi$, constructed in Proposition~\ref{prop:LWPmoll}. Propositions~\ref{prop:apriori-Zk} and~\ref{prop:apriori-d1} guarantee that \(u_\ell\in C([0,T_*];\mathcal{Z}^2)\) for all \(\ell\geq 0\) and that \(\{ u_\ell\}_{\ell\geq 0}\) is Cauchy in the complete metric space \(C([0,T_*];\mathcal{Z}^1)\).
Hence, there exists \(u\in C([0,T_*];\mathcal{Z}^1)\) such that \(u_\ell\to u\) strongly in \(C([0,T_*];\mathcal{Z}^1)\).
Moreover, \eqref{est:aprioriZ2} shows that the sequence \(\{ \partial_xu_\ell\}_{\ell\geq 0}\) is bounded in \(L^\infty(0,T_*;H^1(\R))\). By the Banach-Alaoglu theorem, we can extract a subsequence converging weakly-$*$ to some $\widetilde{u}\in L^\infty(0,T_*;H^1(\R))$. Since \(\partial_xu_\ell\to \partial_xu\) strongly in \(C([0,T_*];L^2(\R))\), we have \(\partial_xu=\widetilde{u}\in L^\infty(0,T_*;H^1(\R))\); thus \(u\in L^\infty(0,T_*;\mathcal{Z}^2)\). The bound \eqref{bd:E2} follows immediately from \eqref{est:aprioriZ2} and the weak-$*$ lower semicontinuity of the $L^\infty_{T_*}H^1$ norm, {while \eqref{bd:Eti} is a consequence of \eqref{est:aprioriZ2'} and the strong convergence in $C([0,T_*];\mathcal{Z}^1)$}. The higher regularity \eqref{bd:Ek} for \(k\geq 3\) is obtained analogously using \eqref{est:aprioriZ2'} and \eqref{est:aprioriZk}.

\medskip
It remains to verify that the limit \(u\) satisfies the integral equation
\begin{equation}\label{gINLS-int}
u(t)=U(t)\phi -i\int_0^tU(t-s)\mathcal{N}(u(s))\,ds,
\end{equation}
that \(u(0)=\phi\)  and that \(u\in C([0,T_*];\mathcal{Z}^1_{\rho_\phi})\).
The initial condition $u(0)=\phi$ is clear from the strong convergence.
To pass to the limit in the nonlinearity, we estimate
\begin{align*}
&\| P_{\leq \ell}\mathcal{N}(P_{\leq \ell}u_\ell)-\mathcal{N}(u)\|_{L^\infty_{T_*}L^2}\\
&\lesssim 2^{-\ell}\| \mathcal{N}(P_{\leq \ell}u_\ell)\|_{L^\infty_{T_*}H^1}+\| \mathcal{N}(P_{\leq \ell}u_\ell)-\mathcal{N}(u)\|_{L^\infty_{T_*}L^2} \\
&\lesssim 2^{-\ell}\delta^{\frac13}\| \mathcal{E}^2_\delta(P_{\leq \ell}u_\ell)\|_{L^\infty_{T_*}}^{\frac32}+\delta^{\frac13}\Big( 1+\| \mathcal{E}^1_\delta(P_{\leq \ell}u_\ell)\|_{L^\infty_{T_*}}+\| \mathcal{E}^1_\delta(u)\|_{L^\infty_{T_*}}\Big) \| d^1_\delta(P_{\leq \ell}u_\ell,u)\|_{L^\infty_{T_*}} \\
&\lesssim_{\delta,M_0} 2^{-\ell}+\| d^1_\delta(u_\ell,u)\|_{L^\infty_{T_*}} \\
&\to \ 0\qquad (\text{as } \ell\to \infty).
\end{align*}
Here, we utilized Lemma~\ref{lem:N0N1}, \eqref{est:dconv}, \eqref{est:aprioriZ2}, \eqref{bd:E2}, and the strong convergence $u_\ell\to u$ in $C([0,T_*];\mathcal{Z}^1)$. Consequently,
\[ -i\int_0^t U(t-s)P_{\leq \ell}\mathcal{N}(P_{\leq \ell}u_\ell(s))\,ds\ \to \ -i\int_0^t U(t-s)\mathcal{N}(u(s))\,ds \quad\text{strongly in } C([0,T_*];L^2(\R)). \]
Together with the convergence of the linear part, this shows that \(u\) satisfies \eqref{gINLS-int}.
Because \(U(t)\phi\in C(\R;\mathcal{Z}^1_{\rho_\phi})\) by \eqref{est:UtEk} and the Duhamel integral belongs to \(C([0,T_*];H^1(\R))\subset C([0,T_*];\mathcal{Z}^1_0)\) for \(u\in L^\infty(0,T_*;\mathcal{Z}^2)\) by Lemma~\ref{lem:N0N1}, we conclude from \eqref{gINLS-int} that \(u\in C([0,T_*];\mathcal{Z}^1_{\rho_\phi})\).

\medskip
Finally, let \(\phi,\psi\in \mathcal{Z}^2\) with \(\mathcal{E}^2_\delta(\phi), \mathcal{E}^2_\delta(\psi)\leq M_0^2\) and let \(u,v\) be the corresponding solutions constructed above.
Taking \(\ell,m\to \infty\) in \eqref{est:apriorid1} yields 
\begin{equation}\label{bd:d1}
\big\| d^1_\delta(u,v)\big\|_{L^\infty_{T_*}}\lesssim (1+M_0)\,d^1_\delta(\phi,\psi).
\end{equation}

\bigskip
\noindent
{\bf Step 2}.
{\it We claim the following: 
Let $u,v\in C([0,T];\mathcal{Z}^2)$ be two solutions to \eqref{gINLS} on an interval $[0,T]$. 
Assume that $\| \mathcal{E}^2_\delta(u)\|_{L^\infty_T}\leq M_1^2$ and $\| \mathcal{E}^2_\delta(v)\|_{L^\infty_T}\leq M_1^2$ for some $M_1>0$.
Then, there exists $T_1=T_1(M_1)>0$ such that for $0\leq t\leq \min \{ T,T_1\}$ it holds that
\begin{equation*}
d^1_\delta(u(t),v(t))\lesssim (1+M_1)d^1_\delta(u(0),v(0)).
\end{equation*}
In particular, the solution of the Cauchy problem associated with \eqref{gINLS} is unique in $C([0,T];\mathcal{Z}^2)$.}

\medskip
We remark that this claim must be discussed independently of the previous Step 1 (in particular the difference estimate \eqref{bd:d1}), since for uniqueness we have to consider general solutions which may differ from those constructed in Step 1.
However, the claim can be proven along the same lines as Proposition~\ref{prop:apriori-d1} using the original equation \eqref{gINLS}, \eqref{gINLS-int} instead of the mollified one \eqref{gINLSmoll}, \eqref{gINLSmoll-int}.
We omit the details.

\bigskip
\noindent
{\bf Step 3}.
{\it We shall prove that the solution $u$ constructed in Step~1 satisfies $u \in C([0,T_*];\mathcal{Z}^2)$ and the continuity of the flow map $u_0\mapsto u$ in $C([0,T_*];\mathcal{Z}^2)$.}

\medskip
We can prove $u\in C([0,T_*];\mathcal{Z}^k)$ for $\phi\in \mathcal{Z}^k$ if $k\geq 3$ in a similar manner; we omit the proof.
In particular, this concludes the proof of Theorem~\ref{thm:LWP}.
To prove the claim, we may adapt an argument due to Bona--Smith \cite{Bona-Smith} or the idea of frequency envelope due to Tao \cite{Tao-2001, Ifrim-Tataru, Burq} to the nonvanishing solutions in $\mathcal{Z}^2$.%
\footnote{~The abstract theorem in \cite{Burq} does not seem to apply directly to our problem, since $\mathcal{Z}^2$ is not a vector space.}
Here, we outline an adaptation of the latter idea.

Fix $\eps \in (0,1)$, and for $\delta>0$ define the frequency envelope $\mathbf{c}_\delta [f]=\{ c_{\delta,j}[f]\}_{j\geq 0}$ of $f\in \mathcal{Z}^2$ by%
\footnote{~We also define $\mathbf{c}_\infty [f]$ for $f\in X^2$ by dropping the $\delta$-depending term in $\mathbf{c}_\delta [f]$.
We can easily show the properties of $\mathbf{c}_\infty$ similar to those of $\mathbf{c}_\delta$ in Lemma~\ref{lem:fe}.}
\[ c_{\delta,j}[f]:=\sum _{k=0}^{\infty}2^{-\eps |j-k|}\| P_k\partial_xf\|_{H^1}+2^{-\eps j}\big( \| f\|_{L^\infty}+\delta^{-\frac23}\big\| |f|^2-\rho_f^2\big\|_{L^2}^{\frac12}\big) ,\qquad j\in\Z_{\geq 0}.\]
We collect some basic properties of $\mathbf{c}_\delta [f]$ in the following lemma, which can be shown by a standard argument (see, e.g., \cite{Ifrim-Tataru}); we give a proof in Appendix~\ref{sec:app-fe} for completeness.
\begin{lemma}\label{lem:fe}
Let $\delta>0$.
For any $f,g\in \mathcal{Z}^2$ and integers $j,k\geq 0$, the following hold.
\begin{enumerate}
\item[$\mathrm{(i)}$] $\| P_j\partial_xf\|_{H^1}\leq c_{\delta,j}[f]$,\quad $c_{\delta,k}[f]\leq 2^{\eps |k-j|}c_{\delta,j}[f]$,\quad $\mathcal{E}^2_\delta(f)\sim \| \mathbf{c}_\delta[f]\|_{\ell^2(\Z_{\geq 0})}^2$.
\item[$\mathrm{(ii)}$] $\big\| \mathbf{c}_\delta[f]-\mathbf{c}_\delta[g]\big\| _{\ell^2(\Z_{\geq 0})}\lesssim \| f-g\|_{X^2}+\delta^{-\frac23}\big\| (|f|^2-\rho_f^2)-(|g|^2-\rho_g^2)\big\|_{L^2}^{1/2}$.
\item[$\mathrm{(iii)}$] $\| P_{\leq j}f\|_{X^3}\lesssim 2^jc_{\delta,j}[f]$.
\end{enumerate}
\end{lemma}

For any $\delta\geq 1$, $\phi\in \mathcal{Z}^2$ with $\mathcal{E}^2_\delta(\phi)\leq M_0^2$ and $j\geq 1$, let $u$ and $u^j$ be the unique solutions of \eqref{gINLS} with initial condition, respectively, $u(0)=\phi$ and $u^j(0)=P_{\leq j}\phi$, which we have constructed in the class $C([0,T_*];\mathcal{Z}^1)\cap L^\infty(0,T_*;\mathcal{Z}^2)$.%
\footnote{~From \eqref{est:EkPl}, we see that $\mathcal{E}^2_\delta(P_{\leq j}\phi)\leq 3M_0^2$ for any $j\geq 0$.
Therefore, by slightly modifying the choice of $T_*$ in Proposition~\ref{prop:apriori-d1} we can construct the solutions $u$, $u^j$ on a uniform interval $[0,T_*]$.}
Note that $P_{\leq j}\phi\in \mathcal{Z}^3$ and hence $u^j\in C([0,T_*];\mathcal{Z}^1)\cap L^\infty(0,T_*;\mathcal{Z}^3)\subset C([0,T_*];\mathcal{Z}^2)$.
Using the $X^3$ bound from \eqref{bd:Ek}, the $\mathcal{Z}^1$ difference bound \eqref{bd:d1}, and Lemma~\ref{lem:fe}, we can derive the following estimate:
\begin{equation}\label{est:fe}
\big\| P_k\partial_x^2(u^j-u^{j-1})\big\|_{L^\infty_{T_*}L^2}\lesssim_{M_0} 2^{-|k-j|}c_{\delta,j}[\phi],\qquad j\geq 1,\quad k\geq 0.
\end{equation}
This estimate is the key to prove the claim of Step~3 and shows that the initial frequency distribution is not significantly disturbed during the time of local existence. 
For the reader's convenience, we also give a proof of \eqref{est:fe} in Appendix~\ref{sec:app-fe}.

To see $u\in C([0,T_*];\mathcal{Z}^2)$, we first notice that the difference bound \eqref{bd:d1} shows the convergence $u^j\to u$ in $C([0,T_*];\mathcal{Z}^1)$, and hence $u\in C([0,T_*];\mathcal{Z}^1)$.
It then suffices to prove $u\in C([0,T_*];\dot{H}^2(\R))$.
We deduce from \eqref{est:fe} that for any $m_2>m_1\geq 0$,
\begin{align*}
\| \partial_x^2(u^{m_2}-u^{m_1})\|_{L^\infty_{T_*}L^2}^2&=\bigg\| \sum_{j=m_1+1}^{m_2}\partial_x^2(u^j-u^{j-1})\bigg\|_{L^\infty_{T_*}L^2}^2 \\
&\lesssim \sum_{j_1,j_2=m_1+1}^{m_2}\sum_{\begin{smallmatrix} k_1,k_2\geq 0 \\ |k_1-k_2|\leq 1\end{smallmatrix}}\| P_{k_1}\partial_x^2(u^{j_1}-u^{j_1-1})\|_{L^\infty_{T_*}L^2}\| P_{k_2}\partial_x^2(u^{j_2}-u^{j_2-1})\|_{L^\infty_{T_*}L^2} \\
&\lesssim_{M_0} \sum_{j_1,j_2=m_1+1}^{m_2}\sum_{\begin{smallmatrix} k_1,k_2\geq 0 \\ |k_1-k_2|\leq 1\end{smallmatrix}}2^{-|k_1-j_1|}c_{\delta,j_1}[\phi] 2^{-|k_2-j_2|}c_{\delta,j_2}[\phi] \\
&\lesssim \sum_{j_1,j_2=m_1+1}^{m_2} 2^{-\frac12|j_1-j_2|}c_{\delta,j_1}[\phi]c_{\delta,j_2}[\phi] \lesssim \sum_{j=m_1+1}^{m_2}c_{\delta,j}[\phi]^2
\leq \| \mathbf{c}_\delta[\phi]\|_{\ell^2(\mathbb{Z}_{>m_1})}^2.
\end{align*}
Since $\mathbf{c}_\delta[\phi]\in \ell^2(\Z_{\geq 0})$, we see that the sequence $\{ u^j\}_{j\geq 0}$ is Cauchy in $C([0,T_*];\dot{H}^2(\R))$, concluding that the limit $u$ belongs to $C([0,T_*];\mathcal{Z}^2)$ and that
\begin{equation}\label{est:cont1}
\| \partial_x^2(u-u^j)\|_{L^\infty_{T_*}L^2}\lesssim_{M_0} \| \mathbf{c}_\delta[\phi]\|_{\ell^2(\Z_{>j})},\qquad j\geq 0.
\end{equation}

To see the continuity of the flow map, let $\phi, \{ \phi_{(n)} \}_{n\geq 0}$ be such that $\mathcal{E}^2_\delta(\phi), \mathcal{E}^2_\delta(\phi_{(n)})\leq M_0^2$ and $d^2_\delta(\phi_{(n)},\phi)\to 0$ as $n\to \infty$. 
Furthermore, let $u_{(n)}, u_{(n)}^{j} \in C([0,T_*];\mathcal{Z}^2)$ denote the solutions to \eqref{gINLS} with $u_{(n)}(0)=\phi_{(n)}$ and $u_{(n)}^{j}(0)=P_{\leq j}\phi_{(n)}$. 
In view of the $\mathcal{Z}^1$ difference estimate \eqref{bd:d1}, it suffices to prove
\begin{equation}\label{est:cont2}
\lim_{n\to \infty}\| \partial_x^2(u_{(n)} - u) \|_{L_{T_*}^{\infty}L^2}=0. 
\end{equation}
The triangle inequality yields
\[ \| \partial_x^2(u_{(n)} - u) \|_{L_{T_*}^{\infty}L^2}\leq \| \partial_x^2(u_{(n)}-u_{(n)}^j)\|_{L^\infty_{T_*}L^2}+\| \partial_x^2(u-u^j)\|_{L^\infty_{T_*}L^2}+\| \partial_x^2(u_{(n)}^j-u^j)\|_{L^\infty_{T_*}L^2} .\]
Using \eqref{est:cont1} and Lemma~\ref{lem:fe} (ii), we bound the first two terms on the right-hand side by
\[ \| \mathbf{c}_\delta[\phi_{(n)}]\|_{\ell^2(\Z_{>j})}+\| \mathbf{c}_\delta[\phi]\|_{\ell^2(\Z_{>j})} \lesssim \| \phi_{(n)}-\phi\|_{X^2}+\big[ d^1_\delta(\phi_{(n)},\phi)\big]^{\frac12}+\| \mathbf{c}_\delta[\phi]\|_{\ell^2(\Z_{>j})}.\]
For the last term, we can use the $X^3$ bound from \eqref{bd:Ek} and the difference bound \eqref{bd:d1} as well as \eqref{est:dkPl}:
\begin{equation}\label{est:cont3}
\begin{aligned}
\| \partial_x^2(u_{(n)}^j-u^j)\|_{L^\infty_{T_*}L^2}^2&\leq \big( \| \partial_x^3u_{(n)}^j\|_{L^\infty_{T_*}L^2}+\| \partial_x^3u^j\|_{L^\infty_{T_*}L^2}\big) \| d^1_\delta(u_{(n)}^j,u^j)\|_{L^\infty_{T_*}} \\
&\lesssim_{M_0} 2^j\, d^1_\delta(P_{\leq j}\phi_{(n)},P_{\leq j}\phi) 
\lesssim_{M_0} 2^j\,d^1_\delta(\phi_{(n)},\phi).
\end{aligned}
\end{equation} 
Putting these estimates together, we have
\[ \limsup_{n\to \infty}\| \partial_x^2(u_{(n)}-u)\|_{L^\infty_{T_*}L^2}\lesssim_{M_0} \| \mathbf{c}_\delta[\phi]\|_{\ell^2(\Z_{>j})} \]
for any $j$, and obtain \eqref{est:cont2} by letting $j\to \infty$.

We have completed the proof of Theorem~\ref{thm:LWP}.
\end{proof}

\begin{remark}
Corollary~\ref{cor:LWP-CM} follows from the exact same argument by simply omitting the nonlinear term $\mathcal{N}_\delta$. 
We first disregard the estimates of $\widetilde{\mathcal{E}}(u)$ and $\widetilde{d}(u,v)$ to prove local well-posedness in $X^2$, and then employ these estimates to prove persistence and continuity of the solution map in $\mathcal{Z}^2$.
\end{remark}

We conclude this section by showing the convergence result.
\begin{proof}[Proof of Theorem~\ref{thm:conv}]
Let $\{ \phi_\delta\}_{\delta\in [1,\infty]}$, $\{ u_\delta\}_{\delta\in[1,\infty]}$ be as in the statement of the theorem, and take $M_0>0$ such that $\| \phi_\infty\|_{X^2}^2\leq M_0^2$. 
Then, by the assumption and \eqref{est:EkPl}, there exists $\delta_*\in [1,\infty)$ such that $\mathcal{E}^2_\delta(\phi_\delta)\leq 2M_0^2$ and $\mathcal{E}^2_\delta(P_{\leq j}\phi_\delta)\leq 6M_0^2$ for any $\delta\in [\delta_*,\infty )$ and $j\in \Z_{\geq 0}$.
Let $u_\delta^j$ denote the (unique) solution to \eqref{gINLS} ($\delta\in [1,\infty)$) or \eqref{gCM} ($\delta=\infty$) with $u_\delta^j(0)=P_{\leq j}\phi_\delta$. 
From the proof of Theorem~\ref{thm:LWP}, there exists $T_*\gtrsim (1+M_0^4)^{-1}$ such that  for any $\delta\in [\delta_*,\infty )$ and $j\in \Z_{\geq 0}$, these solutions exist on $[0,T_*]$ (in particular $T_{\delta,\max}>T_*$) and satisfy
\begin{align}
\| \mathcal{E}^2_\delta(u_\delta)\|_{L^\infty_{T_*}}+\| \mathcal{E}^2_\delta(u_\delta^j)\|_{L^\infty_{T_*}}&\lesssim \mathcal{E}^2_\delta(\phi_\delta)+\mathcal{E}^2_\delta(P_{\leq j}\phi_\delta)\lesssim M_0^2, \label{est:apriori-delta1} \\
\| \widetilde{\mathcal{E}}(u_\delta)\|_{L^\infty_{T_*}}+\| \widetilde{\mathcal{E}}(u_\delta^j)\|_{L^\infty_{T_*}}&\lesssim \widetilde{\mathcal{E}}(\phi_\delta)+\widetilde{\mathcal{E}}(P_{\leq j}\phi_\delta)+\| \phi_\delta\|_{X^1}^2\lesssim \widetilde{\mathcal{E}}(\phi_\delta)+M_0^2, \label{est:apriori-delta1'} \\
\| u_\delta^j \|_{L^\infty_{T_*}X^3}&\lesssim \| P_{\leq j}\phi_\delta\|_{X^3}\lesssim 2^jM_0, \label{est:apriori-delta2} \\
\| d^1_\delta(u_\delta^j,u_\delta)\|_{L^\infty_{T_*}}&\lesssim_{M_0} d^1_\delta(P_{\leq j}\phi_\delta,\phi_\delta)\lesssim 2^{-j}(M_0+M_0^2), \label{est:apriori-delta3} \\
\| \partial_x^2(u_\delta-u_\delta^j)\|_{L^\infty_{T_*}L^2}&\lesssim _{M_0}\| \mathbf{c}_{\delta}[\phi_\delta]\|_{\ell^2(\Z_{>j})}. \label{est:apriori-delta4}
\end{align}
Indeed, these estimates have been shown in \eqref{bd:E2}, \eqref{bd:Eti}, \eqref{bd:Ek}, \eqref{bd:d1}, and \eqref{est:cont1}, respectively, with the constants independent of $\delta\in [1,\infty )$. 
Similarly, for $\delta=\infty$ and any $j\geq 0$ we have the corresponding estimates 
\begin{gather*}
\| u_\infty\|_{L^\infty_{T_*}X^2}+\| u_\infty^j\|_{L^\infty_{T_*}X^2}\lesssim M_0,\qquad \| u_\infty^j\|_{L^\infty_{T_*}X^3}\lesssim \| P_{\leq j}\phi_\infty\|_{X^3}\lesssim 2^jM_0, \\
\| u_\infty^j-u_\infty\|_{L^\infty_{T_*}X^1}\lesssim _{M_0}\| P_{\geq j+1}\phi_\infty\|_{X^1}\lesssim 2^{-j}M_0,\qquad \| \partial_x^2(u_\infty-u_\infty^j)\|_{L^\infty_{T_*}L^2}\lesssim _{M_0}\| \mathbf{c}_\infty[\phi_\infty]\|_{\ell^2(\Z_{>j})}.
\end{gather*}

We next prove the difference estimate
\begin{equation}\label{est:diff-delta}
\| u_\delta - u_\infty \|_{L^\infty_{T_*}X^1}\lesssim \| \phi_\delta-\phi_\infty\|_{X^1} + \nu_{M_0}(\delta),\qquad \delta\in [\delta_*,\infty),
\end{equation}
where $\nu_{M_0}(\delta)$ is a nonnegative function satisfying $\lim\limits_{\delta\to\infty}\nu_{M_0}(\delta)=0$ for each $M_0>0$.
To see \eqref{est:diff-delta}, it suffices to prove
\begin{equation}\label{est:diff-delta-j}
\| u_\delta^j-u_\infty^j\|_{L^\infty_{T_*}X^1}\lesssim \| \phi_\delta-\phi_\infty\|_{X^1} + \nu_{M_0}(\delta),\qquad \delta\in [\delta_*,\infty),\ j\geq 0
\end{equation}
and take the limit $j\to \infty$ with the estimate \eqref{est:apriori-delta3}.
Then, since $u_\delta^j$ is smooth, we can follow the calculations in the proof of Proposition~\ref{prop:apriori-d1}.
In fact, the main part of the proof involving the modified energy estimate remains unchanged; we only have to replace the difference of $\mathcal{N}_\delta$'s for two solutions by a single $\mathcal{N}_\delta(u_\delta^j)$.
The previous argument for Proposition~\ref{prop:apriori-d1} implies the estimates
\begin{align*}
\| u_\delta^j-u_\infty^j\|_{L^\infty_{T_*}X^1}&\lesssim \| P_{\leq j}(\phi_\delta-\phi_\infty)\|_{X^1}+T_*\| \mathcal{N}_\delta(u_\delta^j)\|_{L^\infty_{T_*}X^1}+\| \partial_x(u_\delta^j-u_\infty^j)\|_{L^\infty_{T_*}L^2}, \\
\| \partial_x( u_\delta^j-u_\infty^j) \|_{L^\infty_{T_*}L^2}^2 &\lesssim \| \partial_xP_{\leq j}(\phi_\delta-\phi_\infty)\|_{L^2}^2 + T_*(M_0+M_0^3)\| \mathcal{N}_\delta(u_\delta^j)\|_{L^\infty_{T_*}X^1}\\
&\quad +\big(T_*(2^{n_2}M_0^2+M_0^4) + 2^{-n_2}M_0^2\big) \| u_\delta^j-u_\infty^j\|_{L^\infty_{T_*}X^1}^2 
\end{align*}
instead of \eqref{est:apriorid1-A} and \eqref{est:apriorid1-B}, respectively.
Choosing $n_2$ appropriately and modifying $T_*\ll (1+M_0^4)^{-1}$ if necessary, we obtain
\[ \| u_\delta^j-u_\infty^j\|_{L^\infty_{T_*}X^1}^2\lesssim \| \phi_\delta-\phi_\infty\|_{X^1}^2+ (M_0+M_0^3)\| \mathcal{N}_\delta(u_\delta^j)\|_{L^\infty_{T_*}X^1}+\| \mathcal{N}_\delta(u_\delta^j)\|_{L^\infty_{T_*}X^1}^2.\]
Now, Lemma~\ref{lem:N0N1}, \eqref{est:apriori-delta1}, \eqref{est:apriori-delta1'}, and the assumption $\lim\limits_{\delta\to\infty}\delta^{-\frac43}\widetilde{\mathcal{E}}(\phi_\delta)=0$ imply that
\begin{align*}
\| \mathcal{N}_\delta(u_\delta^j)\|_{L^\infty_{T_*}X^1}&\lesssim \delta^{-1}\big( \| \widetilde{\mathcal{E}}(u_\delta^j)\|_{L^\infty_{T_*}}^{\frac34}\| u_\delta^j\|_{L^\infty_{T_*}X^1}^{\frac32}+\| u_\delta^j\|_{L^\infty_{T_*}X^1}^3\big) \\
&\lesssim \big[ \delta^{-\frac43}\widetilde{\mathcal{E}}(\phi_\delta)\big]^{\frac34} M_0^{\frac32}+\delta^{-1}M_0^3 \ \to \ 0\qquad (\delta \to \infty).
\end{align*}
Hence, we have \eqref{est:diff-delta-j} by setting
\[ \nu _{M_0}(\delta):=\Big\{ (M_0+M_0^3)\Big( \big[ \delta^{-\frac43}\widetilde{\mathcal{E}}(\phi_\delta)\big]^{\frac34} M_0^{\frac32}+\delta^{-1}M_0^3\Big) +\Big( \big[ \delta^{-\frac43}\widetilde{\mathcal{E}}(\phi_\delta)\big]^{\frac34} M_0^{\frac32}+\delta^{-1}M_0^3\Big)^2\Big\}^{\frac12} .\]

In view of \eqref{est:diff-delta}, the convergence $\| u_\delta- u_\infty \|_{L^\infty_{T_*}X^2}\to 0$ follows once $\| \partial_x^2(u_\delta-u_\infty)\|_{L^\infty_{T_*}L^2}\to 0$ is verified.
To prove the latter convergence, we follow the argument showing continuous dependence in Step~3 of the proof of Theorem~\ref{thm:LWP}.
Using interpolation, \eqref{est:apriori-delta2}, and \eqref{est:diff-delta-j} in place of \eqref{bd:d1}, we have the fixed-$j$ difference estimate corresponding to \eqref{est:cont3}:
\begin{align*}
\| \partial_x^2(u_\delta^j-u_\infty^j)\|_{L^\infty_{T_*}L^2}^2&\leq \big( \| \partial_x^3u_\delta^j\|_{L^\infty_{T_*}L^2}+\| \partial_x^3u_\infty^j\|_{L^\infty_{T_*}L^2}\big) \| u_\delta^j-u_\infty^j\|_{L^\infty_{T_*}X^1}\\
&\lesssim_{M_0} 2^j\big( \| \phi_\delta-\phi_\infty\|_{X^1}+\nu_{M_0}(\delta)\big) .
\end{align*}
Moreover, similarly to Lemma~\ref{lem:fe}~(ii) we can show that
\[ \| \mathbf{c}_{\delta}[\phi_\delta]- \mathbf{c}_{\infty}[\phi_\infty]\|_{\ell^2(\Z_{\geq 0})}\lesssim \| \phi_\delta-\phi_\infty\|_{X^2}+\big[\delta^{-\frac43}\widetilde{\mathcal{E}}(\phi_\delta)\big]^{\frac12}\ \to 0\qquad (\delta\to \infty). \]
Combining them with \eqref{est:apriori-delta4}, we obtain
\begin{align*}
\limsup_{\delta\to \infty}\| \partial_x^2(u_\delta-u_\infty)\|_{L^\infty_{T_*}L^2} &\leq \limsup_{\delta\to\infty}\Big( \| \partial_x^2(u_\delta-u_\delta^j)\|_{L^\infty_{T*}L^2}+\| \partial_x^2(u_\infty-u_\infty^j)\|_{L^\infty_{T*}L^2}+\| \partial_x^2(u_\delta^j-u_\infty^j)\|_{L^\infty_{T_*}L^2}\Big) \\
&\lesssim_{M_0} \limsup_{\delta\to \infty}\Big( \| \mathbf{c}_{\delta}[\phi_\delta]\|_{\ell^2(\Z_{>j})}+\| \mathbf{c}_{\infty}[\phi_\infty]\|_{\ell^2(\Z_{>j})}+\big[ 2^j\big( \| \phi_\delta-\phi_\infty\|_{X^1}+\nu_{M_0}(\delta) \big) \big]^{\frac12}\Big) \\
&\lesssim \| \mathbf{c}_{\infty}[\phi_\infty]\|_{\ell^2(\Z_{>j})}
\ \to 0\qquad (j\to \infty).
\end{align*}
We have thus proven that $T_{\delta,\max}>T_*$ for any $\delta \in [\delta_*,\infty]$ and $\lim\limits_{\delta\to\infty}\| u_\delta-u_\infty\|_{L^\infty_{T_*}X^2}=0$.
In addition, we see from \eqref{est:apriori-delta1'} that $\delta^{-\frac43}\widetilde{\mathcal{E}}(u_\delta(T_*))\to 0$ as $\delta\to \infty$.

Now, we take $T\in (0,T_{\infty,\max})$ arbitrarily.
Since $\| u_\infty (t)\|_{X^2}$ is bounded on $[0,T]$, repeating the above procedure finitely many times we see that there exists $\delta_{**}\in [1,\infty)$ such that $T_{\delta,\max}>T$ for any $\delta\in [\delta_{**},\infty]$ and $\lim\limits_{\delta\to\infty}\| u_\delta-u_\infty \|_{L^\infty_TX^2}=0$. 

Finally, the case $\phi_\infty\in \mathcal{Z}^2$, $\lim\limits_{\delta\to \infty}d^2(\phi_\delta,\phi_\infty)=0$ can be treated by a simpler argument, and we omit the details.
This completes the proof of Theorem~\ref{thm:conv}.
\end{proof}


\section{Global well-posedness}
\label{sec4}

Our goal in this section is to establish Theorem~\ref{thm:GWP}. To do so, we derive the conservation laws for \eqref{gINLS} in the $\mathcal{Z}^2$ framework; in particular, they are meaningful for solutions with nonvanishing boundary conditions at infinity. 
We then show that, for any real parameters $\alpha \neq 0$, $\beta \geq 0$, the Cauchy problem for \eqref{gINLS} is globally well-posed in $\mathcal{Z}^2_*$. 
In addition, when $\alpha =\beta =1$ we obtain a uniform bound for global solutions to \eqref{INLS}.
Our argument also yields the corresponding results for \eqref{gCM}.

\medskip
The main result in this section is the following:
\begin{theorem}\label{thm:conservation}
Let $\delta>0$, $\alpha,\beta \in \mathbb{R}$.
Let $\phi\in \mathcal{Z}^2_\rho$ for some $\rho\geq 0$, and let $u\in C([0,T_{\max});\mathcal{Z}^2_\rho)$ be the unique maximal solution to the Cauchy problem for \eqref{gINLS} with the initial condition $u(0)=\phi$ constructed in Theorem~\ref{thm:LWP}.
\begin{enumerate} 
\item[$\mathrm{(i)}$] The quantity 
\begin{equation*}
\begin{aligned}
\mathcal{H}_1(u):=\int_{\mathbb{R}} \Big\{ |\partial_{x}u|^2&+\alpha (|u|^2-\rho^2)\Im[ \overline{u}\partial_{x}u] +\frac{\beta}{2} (|u|^2-\rho^2)\Td \partial_{x}(|u|^2-\rho^2) \\
&+\alpha^2\Big(\frac{1}{3}|u|^2+\frac{1}{6}\rho^2\Big) (|u|^2-\rho^2)^2\Big\} \,dx 
\end{aligned}
\end{equation*}
is well-defined in $\mathcal{Z}^2_\rho$, and $\mathcal{H}_1(u(t))$ is conserved in time.

\item[$\mathrm{(ii)}$] The quantity
\begin{equation*}
\begin{aligned}
\mathcal{H}_2(u):=\int_{\mathbb{R}} \Big\{ |\partial_x^2u|^2 &+\alpha \Big( 2|\partial_xu|^2-3\partial_x^2(|u|^2)\Big) \Im[\overline{u}\partial_xu]  \\
&+\beta \Big( 2|\partial_xu|^2-\frac34\partial_x^2(|u|^2)\Big) \Td\partial_x(|u|^2-\rho^2) \\
&+\beta \Im[\overline{u}\partial_xu]\Td\partial_x(\Im[\overline{u}\partial_xu]) \Big\} \,dx 
\end{aligned}
\end{equation*}
is well-defined in $\mathcal{Z}^2_\rho$, and $\mathcal{H}_2(u(t))$ satisfies, for $t\in [0,T_{\max})$,
\begin{equation}\label{est:I2}
\mathcal{H}_2(u(t))\lesssim_{\alpha, \beta,\delta} \mathcal{H}_2(u_0)+\int_0^t\Big( \mathcal{E}^1(u(s))+\big[ \mathcal{E}^1(u(s))\big]^{\frac32}\Big) \Big( 1+\| \partial_x^2u(s)\|_{L^2}^2\Big)\,ds.
\end{equation}

\item[$\mathrm{(iii)}$] Assume $\alpha \neq 0$, $\beta \geq 0$ and $\rho>0$. 
Then, the solution $u$ obeys the following a priori bounds:
\begin{align}
\mathcal{E}^1(u(t))&\leq C_1, \label{apriori:e1} \\
\| \partial_x^2u(t)\|_{L^2}^2&\leq C_2e^{C_3t}(1+\|\partial_x^2\phi\|_{L^2}^2) \label{apriori:p2}
\end{align}
for $t\in [0,T_{\max})$, where the constants $C_1,C_2,C_3>0$ depend only on $\alpha,\beta,\delta,\rho$ and $\mathcal{E}^1(\phi)$.
In particular, $T_{\max}=\infty$.
\end{enumerate} 
Furthermore, for \eqref{gCM} we have the same results as above, with $\Td$ replaced by $H$ and with the constants being independent of $\delta$.
\end{theorem}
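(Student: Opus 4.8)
The plan is to prove (i) and (ii) by the \emph{modified energy method} already exploited in Section~\ref{sec3}, and then to deduce (iii) by combining the resulting conservation law and a priori identity with coercivity estimates and Grönwall's inequality. All the differential identities below are first established for smooth solutions, say $u\in C([0,T];\mathcal{Z}^k_\rho)$ with $k$ large enough to legitimize every integration by parts; such solutions exist for the band-limited data $P_{\leq j}\phi$ by the persistence of regularity in Theorem~\ref{thm:LWP}. They are then extended to a general $u\in C([0,T_{\max});\mathcal{Z}^2_\rho)$ by approximating $\phi$ with $P_{\leq j}\phi$, invoking the continuity of the flow map in $\mathcal{Z}^2_\rho$ from Theorem~\ref{thm:LWP} together with the continuity of $\mathcal{H}_1,\mathcal{H}_2$ as functionals on $(\mathcal{Z}^2_\rho,d^2_\rho)$.

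Well-definedness of $\mathcal{H}_1$ and $\mathcal{H}_2$ on $\mathcal{Z}^2_\rho$ is verified term by term using the Section~\ref{sec2} estimates: each term quadratic in derivatives pairs two $L^2$ factors (for instance $|u|^2-\rho^2\in L^2$ against $\Im[\overline u\partial_x u]\in L^2$, or against $\Td\partial_x(|u|^2-\rho^2)\in L^2$ by \eqref{est:TLonZ}), while the purely algebraic remainder $\alpha^2(\tfrac13|u|^2+\tfrac16\rho^2)(|u|^2-\rho^2)^2$ is integrable since $(|u|^2-\rho^2)^2\in L^1$ and the prefactor lies in $L^\infty$; the same reasoning, now also using $\partial_x u\in H^1\hookrightarrow L^\infty$ and \eqref{est:Tdelta}, handles $\mathcal{H}_2$. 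For the evolution, I differentiate $\mathcal{H}_1(u(t))$, substitute $i\partial_t u=\partial_x^2 u+\mathcal{N}(u)$, and integrate by parts; subtracting the constants $\rho^2$ is exactly what kills the boundary contributions at $|x|=\infty$, and the two singular terms are treated with the symmetry and positivity \eqref{id:Tdeltasymm} of $\Td\partial_x$ and $H\partial_x$. Since $\mathcal{H}_1$ is designed so that every surviving term cancels, this gives $\frac{d}{dt}\mathcal{H}_1(u(t))=0$. The analogous computation for $\mathcal{H}_2$ produces the top-order, derivative-losing interaction flagged in the remark after Lemma~\ref{lem:ibp}; the $\Td$-terms built into $\mathcal{H}_2$ act as correctors (mirroring $\mathcal{I}_2$ in Proposition~\ref{prop:apriori-Zk}) whose time derivative annihilates it, leaving only terms carrying at most two derivatives on each factor. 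These are estimated by Lemmas~\ref{lem:N0N1} and \ref{lem:ibp}, interpolation, and \eqref{est:TLonZ}; bounding the remaining $L^\infty$- and lower-order Sobolev norms by $\mathcal{E}^1_\rho$ yields the integrand $(\mathcal{E}^1_\rho+[\mathcal{E}^1_\rho]^{3/2})(1+\|\partial_x^2 u\|_{L^2}^2)$, and integrating in time gives \eqref{est:I2}.

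For (iii) assume $\alpha\neq0$, $\beta\geq0$. The first task is the coercivity of $\mathcal{H}_1$. Writing $Q:=\int(\tfrac13|u|^2+\tfrac16\rho^2)(|u|^2-\rho^2)^2\,dx\geq0$, I use $|\Im[\overline u\partial_x u]|\le|u|\,|\partial_x u|$ and Cauchy--Schwarz together with $\int|u|^2(|u|^2-\rho^2)^2\,dx\le 3Q$ to get $\big|\,\alpha\int(|u|^2-\rho^2)\Im[\overline u\partial_x u]\,dx\,\big|\le\sqrt3\,|\alpha|\,\|\partial_x u\|_{L^2}\,Q^{1/2}$, and absorb this indefinite term by Young's inequality into $\|\partial_x u\|_{L^2}^2$ and $\alpha^2 Q$. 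Because $\beta\geq0$ makes the $\Td\partial_x$-term nonnegative via \eqref{id:Tdeltasymm}, this produces $\mathcal{H}_1(u)\gtrsim\|\partial_x u\|_{L^2}^2+\alpha^2 Q\gtrsim\|\partial_x u\|_{L^2}^2+\alpha^2\rho^2\big\||u|^2-\rho^2\big\|_{L^2}^2$, where $\alpha\neq0$ is precisely what controls $\big\||u|^2-\rho^2\big\|_{L^2}$. With conservation $\mathcal{H}_1(u(t))=\mathcal{H}_1(\phi)$ (bounded in terms of $\mathcal{E}^1_\rho(\phi)$) and \eqref{est:Linfty} to bound $\|u\|_{L^\infty}$, this gives \eqref{apriori:e1}. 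For \eqref{apriori:p2} I exploit that $\mathcal{H}_2$ is coercive in $\|\partial_x^2 u\|_{L^2}^2$: its last term is nonnegative ($\beta\geq0$), and after integration by parts every correction term is bounded by $\eta\|\partial_x^2 u\|_{L^2}^2$ plus quantities controlled by $\mathcal{E}^1_\rho$ and $\delta$, so that $\|\partial_x^2 u\|_{L^2}^2\lesssim\mathcal{H}_2(u)+C(\mathcal{E}^1_\rho,\delta,\alpha,\beta,\rho)$. Feeding \eqref{apriori:e1} into \eqref{est:I2} turns the integrand into $C(1+\|\partial_x^2 u\|_{L^2}^2)$, so that $y(t):=1+\|\partial_x^2 u(t)\|_{L^2}^2$ satisfies $y(t)\lesssim y(0)+C\int_0^t y(s)\,ds$ with $y(0)\lesssim 1+\|\partial_x^2\phi\|_{L^2}^2$; Grönwall yields \eqref{apriori:p2}. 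Since $\mathcal{E}^2_\rho(u(t))=\|\partial_x u(t)\|_{L^2}^2+\|\partial_x^2 u(t)\|_{L^2}^2+\|u(t)\|_{L^\infty}^2+\big\||u(t)|^2-\rho^2\big\|_{L^2}$ then remains finite on every bounded interval, the blow-up alternative in the local theory forces $T_{\max}=\infty$.

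The main obstacle is the exact cancellation in the time-derivative computations of (i) and (ii): after substituting the equation and distributing derivatives, one must organize a large collection of cubic and quintic interactions and verify that the $\Td$- and $H$-correction terms inside $\mathcal{H}_1$ and $\mathcal{H}_2$ cancel precisely the derivative-losing top-order contribution, relying on the symmetry and positivity \eqref{id:Tdeltasymm}. This is the analogue, at the level of genuine (modified) conservation laws, of the frequency-localized corrector analysis in Proposition~\ref{prop:apriori-Zk}, but now without a mollification parameter to absorb errors, so the algebra must close \emph{exactly} for $\mathcal{H}_1$ and up to controllable lower-order remainders for $\mathcal{H}_2$. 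A secondary delicate point is that the coercivity of $\mathcal{H}_1$ must survive the indefinite cross term, which is exactly what pins down the hypotheses $\alpha\neq0$ and $\beta\geq0$.
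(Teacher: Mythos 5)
Your proposal is correct and follows essentially the same route as the paper: the modified-energy/corrector construction giving exact conservation of $\mathcal{H}_1$ and only a differential inequality for $\mathcal{H}_2$, then coercivity via Young's inequality plus positivity of $\Td\partial_x$ under $\beta\geq 0$ (with $\alpha\neq 0$ controlling $\big\||u|^2-\rho^2\big\|_{L^2}$), an upper bound of the energies at $t=0$ in terms of $\mathcal{E}^1_\rho(\phi)$ and $\|\partial_x^2\phi\|_{L^2}$, Gronwall, and the blow-up alternative. The only difference is expository: the paper executes the cancellation algebra explicitly through its Lemmas~\ref{lem:tderi} and~\ref{lem:ImIm}, whereas you defer that verification, but the structure you describe (successive correctors, total-derivative/boundary arguments enabled by subtracting $\rho^2$, symmetry of $\Td\partial_x$) is exactly how the paper's computation closes.
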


To derive $\mathcal{H}_1(u)$ and $\mathcal{H}_2(u),$ we proceed by constructing modified energies through a cancellation method \cite{shatah,colliander-kdv}. 
We begin with the time derivative of the leading term (either $\int |\partial_x u|^2 dx$ or $\int |\partial_x^2 u|^2 dx$) and systematically introduce correctors to cancel out the problematic nonlinear terms. 
For $\mathcal{H}_1(u)$, this process is continued to provide a conserved quantity for general $\alpha,\beta$.
Although this is not the case for $\mathcal{H}_2(u)$ in general, in the integrable case we can also derive an exact conserved quantity as in the following corollary.

\begin{corollary}\label{cor:conservation}
Let $\delta>0$ and $\phi\in \mathcal{Z}^2_\rho$ for some $\rho\geq 0$. 
If $\beta =\pm |\alpha|$, the quantity  
\[ \mathcal{H}_2^{\mathrm{INLS}}(u):=\int_{\mathbb{R}} \Big\{ \begin{aligned}[t] &|\partial_x^2u|^2 +\alpha \Big( 2|\partial_xu|^2-3\partial_x^2(|u|^2)\Big) \Im[\overline{u}\partial_xu]  \\
&+\beta \Big( 2|\partial_xu|^2-\frac34\partial_x^2(|u|^2)\Big) \Td\partial_x(|u|^2-\rho^2) +\beta \Im[\overline{u}\partial_xu]\Td\partial_x(\Im[\overline{u}\partial_xu]) \\
&+2\alpha ^2|u|^4|\partial_xu|^2+\alpha ^2|u|^2\Big( \frac32[\partial_x(|u|^2)]^2+\frac12 [\Td\partial_x(|u|^2-\rho^2)]^2\Big) \\
&+\alpha\beta \Im[\overline{u}\partial_xu]\Big( 2|u|^2\Td\partial_x(|u|^2-\rho^2)+ \Td\partial_x(|u|^4-\rho^4)\Big) \\
&+\alpha^2(|u|^6-\rho^6)\Big( \alpha \Im[\overline{u}\partial_xu]+\frac23\beta \Td\partial_x(|u|^2-\rho^2)\Big) +\frac14\alpha^2\beta(|u|^4-\rho^4)\Td\partial_x(|u|^4-\rho^4) \\
&+\alpha^4\Big( \frac15|u|^6+\frac25\rho^2|u|^4+\frac35\rho^4|u|^2+\frac3{10}\rho^6\Big) (|u|^2-\rho^2)^2 \Big\} \,dx \end{aligned} \]
is well-defined in $\mathcal{Z}^2_\rho$ and is a conserved quantity for \eqref{gINLS}.
If $\beta =|\alpha|>0$ and $\rho>0$, the global solution $u\in C(\mathbb{R};\mathcal{Z}^2_{\rho})$ to \eqref{gINLS} satisfies
\begin{equation}\label{GWP-inequality}
\sup_{t\in \mathbb{R}}\mathcal{E}^2(u(t))<\infty.
\end{equation}
Furthermore, for \eqref{gCM} we have the same results as above, with $\Td$ replaced by $H$.
\end{corollary}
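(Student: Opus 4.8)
The plan is to treat three separate issues: that $\mathcal{H}_2^{\mathrm{INLS}}$ is well-defined on $\mathcal{Z}^2_\rho$, that it is exactly conserved along the flow when $\beta=\pm|\alpha|$, and that in the defocusing case $\beta=|\alpha|>0$ it is coercive enough to yield the uniform bound \eqref{GWP-inequality}. For well-definedness I would estimate each summand exactly as in the proof that $\mathcal{H}_2$ is well-defined in Theorem~\ref{thm:conservation}(ii): every factor lands in a space matched to the $\mathcal{Z}^2_\rho$ structure, with $\partial_x^2u$, $\partial_x^2(|u|^2)$ and $\Im[\overline u\partial_xu]$ in $L^2$, the products $|\partial_xu|^2$ and $[\partial_x(|u|^2)]^2$ in $L^1$, and by \eqref{est:TLonZ}--\eqref{est:Tdelta} the operator $\Td\partial_x$ sending the renormalized quantities $|u|^2-\rho^2$, $|u|^4-\rho^4$ and $\Im[\overline u\partial_xu]\in H^1$ into $L^2$. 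The renormalizations $|u|^{2k}-\rho^{2k}$ are what convert the divergent monomials of the formal law $E_2$ into integrable quantities, while $u\in L^\infty$ absorbs the smooth bounded prefactors; each resulting term is then a pairing of $L^\infty$, $L^2$ and $L^1$ factors. The same estimates show that $\mathcal{H}_2^{\mathrm{INLS}}$ is continuous with respect to the metric $d^2_\rho$.

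For conservation I would continue the cancellation/modified-energy scheme that produced $\mathcal{H}_2$. Differentiating $\mathcal{H}_2(u(t))$ \emph{exactly} along solutions of \eqref{gINLS}, rather than merely bounding it as in \eqref{est:I2}, and exploiting the symmetry and positivity of $\Td\partial_x$ and $H\partial_x$ from \eqref{id:Tdeltasymm}, one is left with a family of lower-order remainder terms; the additional explicit terms in $\mathcal{H}_2^{\mathrm{INLS}}$ are designed so that their time derivative cancels this remainder. The decisive structural input is the integrable condition $\beta=\pm|\alpha|$: after the reduction $\alpha=1$ one has $\alpha+i\beta H=\alpha(1\pm iH)$, where $\tfrac{1\pm iH}{2}$ is the Szegő projection onto one-sided frequencies, and this relation, together with Cotlar's identity reducing products of $\Td$ as in \cite[Eq.\,(2.12)]{CFL}, is exactly what lets the chain of correctors terminate in a total time derivative, giving $\tfrac{d}{dt}\mathcal{H}_2^{\mathrm{INLS}}(u(t))=0$. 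To make this rigorous I would first carry out the computation for smooth data $\phi\in\mathcal{Z}^\infty_\rho$, where all the manipulations are licit, and then pass to general $\phi\in\mathcal{Z}^2_\rho$ by the approximation $P_{\leq\ell}\phi\to\phi$, invoking the continuity of the flow map in $\mathcal{Z}^2_\rho$ from Theorem~\ref{thm:LWP} and the $d^2_\rho$-continuity of $\mathcal{H}_2^{\mathrm{INLS}}$.

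The uniform bound then follows from coercivity. I would isolate the manifestly nonnegative contributions to $\mathcal{H}_2^{\mathrm{INLS}}$, namely $2\alpha^2|u|^4|\partial_xu|^2$, $\tfrac32\alpha^2|u|^2[\partial_x(|u|^2)]^2$, $\tfrac12\alpha^2|u|^2[\Td\partial_x(|u|^2-\rho^2)]^2$, the sextic term $\alpha^4(\cdots)(|u|^2-\rho^2)^2$, and crucially the two terms whose integrals are nonnegative precisely because $\beta\geq0$ and $\Td\partial_x\geq0$ by \eqref{id:Tdeltasymm}, the quartic $\tfrac14\alpha^2\beta(|u|^4-\rho^4)\Td\partial_x(|u|^4-\rho^4)$ and $\beta\,\Im[\overline u\partial_xu]\Td\partial_x(\Im[\overline u\partial_xu])$; this sign is exactly what fails in the focusing case $\beta=-|\alpha|$. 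The remaining indefinite terms either involve no factor of $\partial_x^2u$ (and are controlled directly by $\mathcal{E}^1_\rho(u)$) or carry at most one power of $\partial_x^2u$, entering through $\partial_x^2(|u|^2)$ or through $\|\partial_xu\|_{L^\infty}$; by Cauchy--Schwarz, Young's inequality and Gagliardo--Nirenberg interpolation of intermediate derivatives between $\mathcal{E}^1_\rho(u)$ and $\|\partial_x^2u\|_{L^2}$ these are bounded by $\varepsilon\|\partial_x^2u\|_{L^2}^2+C_\varepsilon F(\mathcal{E}^1_\rho(u))$. Absorbing the $\varepsilon$-term yields $\|\partial_x^2u(t)\|_{L^2}^2\lesssim\mathcal{H}_2^{\mathrm{INLS}}(u(t))+F(\mathcal{E}^1_\rho(u(t)))$; conservation of $\mathcal{H}_2^{\mathrm{INLS}}$ and the uniform $\mathcal{E}^1_\rho$ bound \eqref{apriori:e1} (available since $\alpha\neq0$, $\beta\geq0$) make the right-hand side bounded uniformly in time, and \eqref{est:Linfty} then controls $\|u\|_{L^\infty}$, giving \eqref{GWP-inequality}.

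The main obstacle I expect is the exact conservation step: organizing the large family of cubic-through-quintic interactions so that the correctors close into a total time derivative, and verifying that $\beta=\pm|\alpha|$ is exactly the identity rendering the otherwise-uncancelled remainder a perfect time derivative. The density and coercivity arguments are essentially routine once the continuity of $\mathcal{H}_2^{\mathrm{INLS}}$ and the sign structure are in hand; the real work lies in the algebraic bookkeeping, in particular the reduction of the double-$\Td$ terms via Cotlar's identity.
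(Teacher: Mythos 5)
Your proposal follows essentially the same route as the paper: conservation is obtained by continuing the corrector/cancellation scheme of Theorem~\ref{thm:conservation}(ii) under the integrability condition $\beta=\pm|\alpha|$ (the extra terms in $\mathcal{H}_2^{\mathrm{INLS}}$ being precisely the higher-order correctors, verified on smooth solutions and extended by density/continuity of the flow), and the uniform bound \eqref{GWP-inequality} then follows by combining near-coercivity of $\mathcal{H}_2^{\mathrm{INLS}}$, its conservation, and the a priori bound \eqref{apriori:e1}. One clarification: the positivity of the $\beta$-terms you call crucial is not actually needed — the paper bounds \emph{every} corrector term in absolute value by $C\big(\mathcal{E}^1_\rho(u)\big)\big(1+\|\partial_x^2u\|_{L^2}\big)$ and absorbs via Young's inequality, so what genuinely fails in the focusing case $\beta=-|\alpha|$ is not the sign structure inside $\mathcal{H}_2^{\mathrm{INLS}}$ but the coercivity of $\mathcal{H}_1$ underlying the uniform $\mathcal{E}^1_\rho$ bound \eqref{apriori:e1}.
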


\begin{remark}
For vanishing solutions in $H^s$, infinitely many conservation laws for \eqref{gCM} with $\alpha=\pm |\beta|=\pm 1$,  and for \eqref{INLS} have been obtained in \cite{GL} and in \cite{P,CFL}, respectively.
In particular, a conserved quantity at the $H^k(\R)$ level is given by $\| \mathcal{L}_{u(t)}^ku(t)\|_{L^2}^2$, where $\mathcal{L}_u$ is a suitable Lax operator (see \eqref{def:lax} for the case of \eqref{INLS}).
In \cite[Lemma~3.5]{Chen}, Chen found conserved quantities at the $\mathcal{Z}^1$ and $\mathcal{Z}^2$ levels for the defocusing \eqref{gCM} with $\alpha=\beta=1$, which were obtained by expanding $\| \mathcal{L}_uu\|_{L^2}$, $\| \mathcal{L}_u^2u\|_{L^2}$ and ``renormalizing'' them for nonvanishing solutions, i.e., replacing $\Pi(|u|^2)$ with $\Pi(|u|^2-1)+1$, where $\Pi=\frac12(1+iH)$ is the Cauchy-Szeg\H{o} projection and is a bounded operator on $L^2(\R)$.
It is then natural to ask whether conservation laws for nonvanishing solutions to \eqref{INLS} can be obtained by similar renormalization procedure.
However, for \eqref{INLS} the operator $\frac12(1+i\Td)$ corresponding to $\Pi$ becomes unbounded on $L^2(\R)$, and there is no apparent way of renormalizing $\| \mathcal{L}_u^ku\|_{L^2}$ for $u\in \mathcal{Z}^k_\rho$.
Instead, in Theorem~\ref{thm:conservation} and Corollary~\ref{cor:conservation}, we construct conserved quantities directly by using the idea of modified energy method.
\end{remark}

\bigskip
From now on, we only consider \eqref{gINLS}; however, we can apply exactly the same argument to \eqref{gCM} by replacing $\Td$ with $H$.
Before proving Theorem~\ref{thm:conservation}, we collect some useful lemmas.

\begin{lemma}\label{lem:tderi}
Let $u\in C([0,T];\mathcal{Z}^\infty)$ be a smooth solution of \eqref{gINLS}.
Then, 
\begin{align*}
\partial_t(|u|^2)&= \partial_x\Big( 2\Im[\overline{u}\partial_xu] +\alpha |u|^4\Big) ,\\
\partial_t(|\partial_xu|^2)&= \partial_x\Big( 2\Im[(\partial_x\overline{u})\partial_x^2u] +\frac{\alpha}{2}[\partial_x(|u|^2)]^2\Big) +2\alpha |\partial_xu|^2\partial_x(|u|^2) +2\beta \Im[\overline{u}\partial_xu] \Td \partial_x^2(|u|^2),\\
\partial_t(|\partial_x^2u|^2)&= \partial_x\Big( 2\Im[(\partial_x^2\overline{u})\partial_x^3u] +\frac{\alpha}{2}[\partial_x^2(|u|^2)]^2\Big) \\
&\quad +2\alpha |\partial_x^2u|^2\partial_x(|u|^2) +2\alpha \partial_x(|\partial_xu|^2)\cdot \partial_x^2(|u|^2) -2\alpha |\partial_xu|^2\partial_x^3(|u|^2) \\
&\quad +4\beta \Im [(\partial_x\overline{u})\partial_x^2u] \Td \partial_x^2(|u|^2) +2\beta \partial_x(\Im[\overline{u}\partial_xu])\cdot \Td\partial_x^3(|u|^2) ,\\
\partial_t(\Im [\overline{u}\partial_xu])&= 2\partial_x(|\partial_xu|^2) -\frac12 \partial_x^3(|u|^2) +2\alpha \Im[\overline{u}\partial_xu] \partial_x(|u|^2) +\beta |u|^2\Td \partial_x^2(|u|^2) . 
\end{align*}
\end{lemma}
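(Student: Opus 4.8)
The plan is to treat all four identities as pointwise balance laws obtained by direct substitution of the equation, so that no spatial integration by parts is needed; the non-local operator $\Td$ is handled purely through its commutation with $\p_x$ and the fact that it preserves real-valuedness. First I would rewrite \eqref{gINLS} as an explicit evolution equation by multiplying by $-i$,
\[ \p_t u = -i\p_x^2 u + \alpha u\,\p_x(|u|^2) + i\beta u\,\Td\p_x(|u|^2), \]
together with its complex conjugate
\[ \p_t\overline{u} = i\p_x^2\overline{u} + \alpha\overline{u}\,\p_x(|u|^2) - i\beta\overline{u}\,\Td\p_x(|u|^2), \]
where I use that $|u|^2$, and hence $\Td\p_x(|u|^2)$, are real-valued: the symbol $\coth(\delta\xi)\xi$ of $\Td\p_x$ is real and even, so $\Td\p_x$ maps real functions to real functions.

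Next, for each of the quantities $|u|^2=u\overline{u}$, $\Im[\overline{u}\p_x u]=\frac{1}{2i}(\overline{u}\p_x u-u\p_x\overline{u})$, $|\p_x u|^2=\p_x u\,\overline{\p_x u}$, and $|\p_x^2 u|^2=\p_x^2 u\,\overline{\p_x^2 u}$, I would differentiate in $t$, commute $\p_t$ with $\p_x$, and substitute the two displayed evolution equations differentiated in $x$ as many times as required. The free part $-i\p_x^2 u$ always assembles into the exact flux $\p_x(\cdots)$ recorded in each formula; for instance $2\Re[\overline{\p_x^2 u}(-i\p_x^4 u)]=2\Im[\overline{\p_x^2 u}\p_x^4 u]=\p_x(2\Im[\overline{\p_x^2 u}\p_x^3 u])$, since $\Im[|\p_x^3 u|^2]=0$, and similarly at lower order. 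The nonlinear $\alpha$-part produces both flux terms and the real source terms listed in the statement; e.g. for $\p_t(|u|^2)$ it contributes $2\alpha|u|^2\p_x(|u|^2)=\alpha\p_x(|u|^4)$.

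The structural observation that organizes the $\beta$-terms is that every surviving $\Td$-contribution carries a factor $i$ multiplying a product of real functions, so taking real parts annihilates precisely those terms that would otherwise demand integration by parts and leaves exactly the stated sources. Concretely, after applying the Leibniz rule to $\p_x^k(u\,\Td\p_x(|u|^2))$, the terms in which $\Td$-derivatives of $|u|^2$ multiply a real factor such as $|\p_x^m u|^2$ give $\Re[i\cdot(\text{real})]=0$ and drop out, while the terms of the form $i\,\overline{\p_x^m u}\,\p_x^n u\cdot\Td\p_x^{\,p}(|u|^2)$ reduce, via $\Re[i\,\overline{\p_x^m u}\,\p_x^n u]=\Im[\overline{\p_x^n u}\,\p_x^m u]$, to the real expressions $\Im[\overline{u}\p_x u]\,\Td\p_x^2(|u|^2)$, $|u|^2\,\Td\p_x^2(|u|^2)$, and so forth; here I freely use that $\Td$ commutes with $\p_x$ to rearrange all $\Td$-derivatives into the forms appearing in the statement.

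The main obstacle will be the bookkeeping in the fourth identity for $\p_t(|\p_x^2 u|^2)$: after expanding $\p_x^2(u\,\p_x(|u|^2))$ and $\p_x^2(u\,\Td\p_x(|u|^2))$ by Leibniz one obtains many terms with up to three derivatives on a single factor, and the difficulty is to separate correctly the perfect-derivative (flux) combinations from the genuine sources $2\alpha|\p_x^2 u|^2\p_x(|u|^2)$, $2\alpha\p_x(|\p_x u|^2)\,\p_x^2(|u|^2)$, $-2\alpha|\p_x u|^2\p_x^3(|u|^2)$, $4\beta\Im[(\p_x\overline{u})\p_x^2 u]\,\Td\p_x^2(|u|^2)$, and $2\beta\p_x(\Im[\overline{u}\p_x u])\,\Td\p_x^3(|u|^2)$, bearing in mind that the $\Td$-terms cannot be reabsorbed by integration by parts at the pointwise level. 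The remaining three identities follow by the same procedure with strictly less computation.
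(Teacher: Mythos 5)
Your proposal is correct and is essentially the paper's proof: the paper disposes of this lemma with the single line ``the proof follows from a direct calculation,'' and your scheme --- substituting $\partial_t u = -i\partial_x^2 u + \alpha u\,\partial_x(|u|^2) + i\beta u\,T_\delta\partial_x(|u|^2)$ and its conjugate, letting the free part assemble into the exact fluxes, and using that $T_\delta\partial_x$ commutes with $\partial_x$ and preserves real-valuedness so that every $\Re[i\cdot(\text{real})]$ term vanishes while the remaining $\beta$-terms reduce via $\Re[i\,\overline{z}w]=\Im[\overline{w}z]$ to the stated sources --- reproduces all four identities exactly (including the delicate $\partial_t(|\partial_x^2u|^2)$ one, where your Leibniz expansion yields precisely $2\alpha|\partial_x^2u|^2\partial_x(|u|^2)+2\alpha\partial_x(|\partial_xu|^2)\partial_x^2(|u|^2)-2\alpha|\partial_xu|^2\partial_x^3(|u|^2)$ plus the flux $\tfrac{\alpha}{2}[\partial_x^2(|u|^2)]^2$, and the two $\beta$-sources). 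The only nitpick is a labeling slip: the identity for $\partial_t(|\partial_x^2u|^2)$ is the third, not the fourth, in the statement.
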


\begin{proof}
The proof follows from a direct calculation.
\end{proof}

\begin{lemma}\label{lem:ImIm}
Let $u\in L^\infty(\R)$ satisfy $\partial_xu\in H^1(\R)$.
Then, we have
\[ \int \Im [(\partial_x\overline{u})\partial_x^2u] \partial_x(\Im [\overline{u}\partial_xu])
=\frac12\int |\partial_x^2u|^2\partial_x(|u|^2) -\frac14\int \partial_x(|\partial_xu|^2)\cdot \partial_x^2(|u|^2) .\]
\end{lemma}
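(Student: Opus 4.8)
The plan is to prove the identity by pointwise complex algebra rather than by integration by parts, so as to stay strictly within the regularity $u\in L^\infty(\R)$, $\partial_x u\in H^1(\R)$. Writing $w:=\partial_x u\in H^1(\R)\hookrightarrow L^\infty(\R)$, every term on both sides involves only $u$, $w$ and $\partial_x w=\partial_x^2 u$, so each integrand is a product of $L^\infty$ and $L^2$ factors and all integrals converge absolutely. The first reduction is to observe that $\partial_x\big(\Im[\overline u\,\partial_x u]\big)=\Im\big[(\partial_x\overline u)\partial_x u+\overline u\,\partial_x^2 u\big]=\Im[\overline u\,\partial_x w]$, since $(\partial_x\overline u)\partial_x u=|w|^2$ is real; hence the left-hand side equals $\int\Im[\overline w\,\partial_x w]\,\Im[\overline u\,\partial_x w]\,dx$. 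I would deliberately avoid integrating by parts at this point, since shifting a derivative onto $\Im[\overline w\,\partial_x w]$ would generate $\partial_x^2 w=\partial_x^3 u$, which is not controlled by the hypotheses.

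Next I would invoke the two elementary polarization identities, valid pointwise for $z_1,z_2\in\C$: $\Im z_1\,\Im z_2=\tfrac12\big(\Re[\overline{z_1}z_2]-\Re[z_1z_2]\big)$ and $\Re z_1\,\Re z_2=\tfrac12\big(\Re[\overline{z_1}z_2]+\Re[z_1z_2]\big)$. Applying the first with $z_1=\overline w\,\partial_x w$ and $z_2=\overline u\,\partial_x w$ rewrites the left-hand side as $\tfrac12\int|\partial_x w|^2\,\Re[\overline u w]\,dx-\tfrac12\int\Re[\overline u\,\overline w\,(\partial_x w)^2]\,dx$. On the right-hand side I would expand $\partial_x(|u|^2)=2\Re[\overline u w]$, $\partial_x(|\partial_x u|^2)=2\Re[\overline w\,\partial_x w]$ and $\partial_x^2(|u|^2)=2|w|^2+2\Re[\overline u\,\partial_x w]$, which turns the right-hand side into $\int|\partial_x w|^2\Re[\overline u w]\,dx-\int|w|^2\Re[\overline w\,\partial_x w]\,dx-\int\Re[\overline w\,\partial_x w]\,\Re[\overline u\,\partial_x w]\,dx$, and then apply the second polarization identity (with the same $z_1,z_2$) to the last cross term.

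The single genuine integration by parts is the harmless one: $\int|w|^2\Re[\overline w\,\partial_x w]\,dx=\tfrac12\int|w|^2\partial_x(|w|^2)\,dx=\tfrac14\int\partial_x(|w|^4)\,dx=0$, which is legitimate because $|w|^4\in W^{1,1}(\R)$ (as $w\in H^1\subset L^2\cap L^\infty$) and so vanishes at $\pm\infty$. After this cancellation the right-hand side collapses to exactly $\tfrac12\int|\partial_x w|^2\Re[\overline u w]\,dx-\tfrac12\int\Re[\overline u\,\overline w\,(\partial_x w)^2]\,dx$, matching the expression obtained for the left-hand side, which completes the proof. The only real subtlety — and the point the argument is organized around — is regularity bookkeeping: the identity is genuinely an $L^2$-level statement, so no derivative beyond $\partial_x^2 u$ may be created. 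The polarization identities achieve precisely this, confining the one unavoidable integration by parts to the total derivative $\partial_x(|w|^4)$, so that a smoothing/density step is not even required.
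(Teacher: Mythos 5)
Your proof is correct and takes essentially the same approach as the paper: both arguments reduce $\partial_x(\Im[\overline{u}\partial_xu])$ to $\Im[\overline{u}\partial_x^2u]$, apply the pointwise polarization identity expressing $\Im z_1\Im z_2$ through real parts (your two identities combine into the paper's single one, $\Im z_1\Im z_2=\Re(z_1\overline{z_2})-\Re z_1\Re z_2$), and dispose of the leftover term via the total-derivative fact $\int\partial_x\big(|\partial_xu|^4\big)\,dx=0$. The only cosmetic difference is that you expand both sides to a common canonical form, whereas the paper converts the left-hand side directly into the right-hand side in one application of the identity.
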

\begin{proof}
Using the identity $\Im(z_1)\Im(z_2)=\Re(z_1\overline{z_2})-\Re(z_1)\Re(z_2)$ ($z_1,z_2\in \mathbb{C}$), we have
\begin{align*}
&\int \Im [(\partial_x\overline{u})\partial_x^2u] \partial_x(\Im [\overline{u}\partial_xu]) =\int |\partial_x^2u|^2\Re [u\partial_x\overline{u}] -\int \Re[(\partial_x\overline{u})\partial_x^2u] \Re [\overline{u}\partial_x^2u] \\
&=\int |\partial_x^2u|^2\cdot \frac12\partial_x(|u|^2) -\int \frac12 \partial_x(|\partial_xu|^2)\cdot \Big( \frac12 \partial_x^2(|u|^2)-|\partial_xu|^2\Big) \\
&=\frac12\int |\partial_x^2u|^2\partial_x(|u|^2) -\frac14\int \partial_x(|\partial_xu|^2)\cdot \partial_x^2(|u|^2) ,
\end{align*}
as desired.
\end{proof}

\medskip
We are now in a position to prove Theorem \ref{thm:conservation}. 

\begin{proof}[Proof of Theorem~\ref{thm:conservation}]
First, it is easy to see that $\mathcal{H}_1(u)$ and $\mathcal{H}_2(u)$ are well-defined for $u\in \mathcal{Z}^2_\rho$, noticing $u\in L^\infty(\R)$, $\partial_xu\in H^1(\R)$, $|u|^2-\rho^2\in H^1(\R)$, and that $\Td\partial_x$ maps $H^1(\R)$ into $L^2(\R)$ by \eqref{est:Tdelta}.
Moreover, we may assume that $u\in \mathcal{Z}^\infty_\rho$ by a standard approximation argument, which relies on continuity of the flow map and persistence of regularity in the local well-posedness Theorem~\ref{thm:LWP}.

For (i), we begin by observing, from Lemma~\ref{lem:tderi}, that
\[ \partial_t \int |\partial_xu|^2 =2\alpha \int |\partial_xu|^2\partial_x(|u|^2) +2\beta \int \Im[\overline{u}\partial_xu] \Td \partial_x^2(|u|^2) .\]
To cancel out two terms on the right-hand side, we introduce two correctors 
\[ I_1^{(1)}(u):=\int (|u|^2-\rho^2)\Im[ \overline{u}\partial_{x}u],\qquad I_1^{(2)}(u):= \int (|u|^2-\rho^2)\Td \partial_{x}(|u|^2-\rho^2) .\]
Using Lemma~\ref{lem:tderi} and integration by parts, we see that
\begin{align*}
\partial_tI_1^{(1)}(u)&=-2\int |\partial_xu|^2\partial_x(|u|^2) -2\alpha \int |u|^2(|u|^2-\rho^2) \partial_x(\Im[\overline{u}\partial_xu]) \\
&\quad -\beta \int \partial_x(|u|^4)\cdot \Td \partial_x(|u|^2-\rho^2) ,\\
\partial_tI_1^{(2)}(u)&=-4\int \Im[\overline{u}\partial_xu] \Td\partial_x^2(|u|^2) +2\alpha \int\partial_x(|u|^4)\cdot \Td \partial_x(|u|^2-\rho^2) .
\end{align*}
Here, we have used the symmetry of $\Td\partial_x$ \eqref{id:Tdeltasymm} to verify
\[ \int \partial_x(|u|^2)\cdot \Td\partial_x(|u|^2-\rho^2)=\int \partial_x(|u|^2-\rho^2)\cdot \Td\partial_x(|u|^2-\rho^2)=0.\]
From these calculations, we see that
\[ \partial_t\Big( \int |\partial_xu|^2 +\alpha I_1^{(1)}(u)+\frac{\beta}{2}I_1^{(2)}(u)\Big) =-2\alpha ^2\int |u|^2(|u|^2-\rho^2)\partial_x(\Im[\overline{u}\partial_xu]).\]
To cancel out the right-hand side, we further add two correctors
\[ I_1^{(3)}(u):= \int (|u|^2-\rho^2)^3,\qquad I_1^{(4)}(u):=\int (|u|^2-\rho^2)^2 .\]
Using Lemma~\ref{lem:tderi} and integration by parts, we see that
\begin{align*}
\partial_tI_1^{(3)}(u)&=6\int (|u|^2-\rho^2)^2\partial_x (\Im[\overline{u}\partial_xu]),\\
\partial_tI_1^{(4)}(u)&=4\int (|u|^2-\rho^2)\partial_x (\Im[\overline{u}\partial_xu]) .
\end{align*}
Here, we have calculated as follows: for $k\geq 1$, 
\begin{align*}
\int (|u|^2-\rho^2)^k\partial_x(|u|^4) 
&=\int (|u|^2-\rho^2)^k\partial_x\Big( (|u|^2-\rho^2)^2 +2\rho^2(|u|^2-\rho^2)+\rho^4 \Big) \\
&=\int \partial_x\Big( \frac{2}{k+2}(|u|^2-\rho^2)^{k+2}+\frac{2\rho^2}{k+1}(|u|^2-\rho^2)^{k+1}\Big) =0.
\end{align*}
From the above calculations, we have
\[ 2\alpha ^2\partial_t\Big( \frac16I_1^{(3)}(u)+\frac{\rho^2}4I_1^{(4)}(u)\Big) =2\alpha ^2\int |u|^2(|u|^2-\rho^2)\partial_x(\Im[\overline{u}\partial_xu]).\]
Consequently, the quantity
\[ \mathcal{H}_1(u):=\int |\partial_xu|^2 +\alpha I_1^{(1)}(u)+\frac{\beta}{2}I_1^{(2)}(u)+\frac{\alpha^2}{3}I_1^{(3)}(u)+\frac{\alpha^2\rho^2}{2}I_1^{(4)}(u) \]
is conserved in time.
This finishes the proof of (i).

In a similar manner, to compute $\mathcal{H}_2$, we begin with
\begin{align*}
\partial_t\int |\partial_x^2u|^2 &=2\alpha \int |\partial_x^2u|^2\partial_x(|u|^2) +4\alpha \int \partial_x(|\partial_xu|^2)\cdot \partial_x^2(|u|^2) \\
&\quad +4\beta \int \Im [(\partial_x\overline{u})\partial_x^2u] \Td \partial_x^2(|u|^2) +2\beta \int \partial_x(\Im[\overline{u}\partial_xu])\cdot \Td\partial_x^3(|u|^2) ,
\end{align*}
and to cancel out the terms on the right-hand side we introduce five correctors%
\footnote{~We can predict these terms from the terms of degree four in $E_2(u)$, which can be obtained by expanding the integral (1.4) with $n=4$.}
\begin{gather*}
I_2^{(1)}(u):=\int |\partial_xu|^2\Im[\overline{u}\partial_xu] ,\qquad
I_2^{(2)}(u):=\int \partial_x^2(|u|^2)\cdot \Im[\overline{u}\partial_xu] ,\\
I_2^{(3)}(u):=\int |\partial_xu|^2\Td\partial_x(|u|^2-\rho^2) ,\qquad
I_2^{(4)}(u):=\int \partial_x^2(|u|^2)\cdot \Td\partial_x(|u|^2-\rho^2) ,\\
I_2^{(5)}(u):=\int \Im[\overline{u}\partial_xu]\Td\partial_x(\Im[\overline{u}\partial_xu]) .
\end{gather*}
By a calculation involving Lemmas~\ref{lem:tderi}--\ref{lem:ImIm}, we see that
\begin{align*}
\partial_tI_2^{(1)}(u)&= -\int |\partial_x^2u|^2\partial_x(|u|^2) +\int \partial_x(|\partial_xu|^2)\cdot \partial_x^2(|u|^2) \ +R_1(u),\\
\partial_tI_2^{(2)}(u)&=2\int \partial_x(|\partial_xu|^2)\cdot \partial_x^2(|u|^2) \ +R_2(u) ,\\
\partial_tI_2^{(3)}(u)&=-2\int \Im[\partial_x\overline{u}\cdot \partial_x^2u]\Td\partial_x^2(|u|^2) -2\int \Im[\overline{u}\partial_xu]\Td\partial_x^2(|\partial_xu|^2) \ +R_3(u),\\
\partial_tI_2^{(4)}(u)&=4\int \partial_x(\Im[\overline{u}\partial_xu])\cdot \Td\partial_x^3(|u|^2) \ +R_4(u),\\
\partial_tI_2^{(5)}(u)&=\int \partial_x(\Im[\overline{u}\partial_xu])\cdot \Td\partial_x^3(|u|^2) +4\int \Im[\overline{u}\partial_xu]\Td\partial_x^2(|\partial_xu|^2) \ +R_5(u).
\end{align*}
Here, each of the remainder terms $R_j(u)$ ($j=1,\dots,5$) is a linear combination of integrals with six $u$'s and four spatial differentials, and also of the form 
\begin{gather*}
\text{either}\qquad \int fh_1h_2\qquad \text{or}\qquad \int g_1g_2h
\intertext{with}
f\in \big\{ 1,|u|^2\big\},\qquad g_1,g_2\in \big\{ \partial_x(|u|^2),\Im [\overline{u}\partial_xu],\Td\partial_x(|u|^2) \big\},\\
h,h_1,h_2\in \big\{ |\partial_xu|^2,\partial_x^2(|u|^2), \partial_x^2(|u|^4), \partial_x(\Im [\overline{u}\partial_xu]) ,\Td\partial_x^2(|u|^2), \Td\partial_x^2(|u|^4), \Td\partial_x(\Im [\overline{u}\partial_xu]) \big\} .
\end{gather*}
By using H\"older, Gagliardo-Nirenberg $\| \partial_xu\|_{L^\infty}\leq \| \partial_xu\|_{L^2}^{1/2}\| \partial_x^2u\|_{L^2}^{1/2}$, and the estimate  \eqref{est:Tdelta}, we estimate these remainder terms as
\begin{equation}\label{remainder-conservation laws}
\begin{aligned}
|R_j(u)|&\lesssim_{\alpha,\beta ,\delta} \Big( \| u\|_{L^\infty}+\| \partial_xu\|_{L^2}\Big) ^4 \Big( \| u\|_{L^\infty}^2+\| \partial_xu\|_{L^2}^2+\| \partial_x^2u\|_{L^2}^2 \Big) \\
&\lesssim \Big(\mathcal{E}^1(u)+\big[\mathcal{E}^1(u)\big]^{\frac32}\Big) \Big( 1+\| \partial_x^2u\|_{L^2}^2\Big) \notag.
\end{aligned}
\end{equation}
We also see that 
\[ \mathcal{H}_2(u):= \int |\partial_x^2u|^2 +2\alpha I_2^{(1)}(u)-3\alpha I_2^{(2)}(u)+2\beta I_2^{(3)}(u) -\frac34\beta I_2^{(4)}(u) +\beta I_2^{(5)}(u)\]
is the only choice of the coefficients to cancel out the right-hand side of $\partial_t\int |\partial_x^2u|^2$ and yield
\[\partial_t |\mathcal{H}_2(u)| \lesssim _{\alpha,\beta}\sum_{j=1}^5|R_j(u)| \lesssim _{\alpha,\beta,\delta} \Big( \mathcal{E}^1(u)+\big[\mathcal{E}^1(u)\big]^{\frac32}\Big) \Big(1+\| \partial_x^2u\|_{L^2}^2\Big) .\]
Integrating in $t$, we obtain \eqref{est:I2} and thus (ii).

It remains to prove (iii).
By Young's inequality, we see that
\[ \Big| \alpha \int (|u|^2-\rho^2)\Im[\overline{u}\partial_xu]\Big| \leq \frac56\int |\partial_xu|^2+\frac{3}{10}\alpha^2\int |u|^2(|u|^2-\rho^2)^2 .\]
Thus, noticing the positivity of $\Td\partial_x$ in \eqref{id:Tdeltasymm} and the assumption $\beta \geq 0$, we see that
\[ \mathcal{H}_1(u)\geq \frac16\| \partial_xu\|_{L^2}^2+\frac{\alpha^2\rho^2}{6}\| |u|^2-\rho^2\|_{L^2}^2.\]
Using the inequality \eqref{est:Linfty} and that $\alpha^2\rho^2>0$, we conclude
\begin{equation}\label{est:I1below}
\| \partial_xu\|_{L^2}^2\lesssim \mathcal{H}_1(u),\quad \| |u|^2-\rho^2\|_{L^2}\lesssim_{\alpha,\rho}\mathcal{H}_1(u)^{\frac12},\quad \| u\|_{L^\infty}^2\lesssim_{\alpha,\rho}\mathcal{H}_1(u)^{\frac23}+1.
\end{equation}
On the other hand, using \eqref{est:Tdelta}--\eqref{est:TLonZ} we easily estimate $\mathcal{H}_1(\phi)$ as
\begin{equation}\label{est:I1above}
\begin{aligned}
\mathcal{H}_1(\phi)&\lesssim_{\alpha,\beta,\rho,\delta} \| \partial_x\phi\|_{L^2}^2+\| |\phi|^2-\rho^2\|_{L^2}\| \phi\|_{L^\infty}\| \partial_x\phi\|_{L^2}+\Big( \| \phi\|_{L^\infty}^2+1\Big) \| |\phi|^2-\rho^2\|_{L^2}^2 \\
&\lesssim \big[\mathcal{E}^1(\phi)\big]^{\frac12}+\big[\mathcal{E}^1(\phi)\big]^{\frac32}.
\end{aligned}
\end{equation}
From \eqref{est:I1below}, \eqref{est:I1above} and the conservation of $\mathcal{H}_1(u(t))$ in (i), we obtain \eqref{apriori:e1}.

Finally, we prove \eqref{apriori:p2}.
Using \eqref{est:Tdelta}--\eqref{est:TLonZ}, H\"older's inequality and the a priori bound \eqref{apriori:e1}, we see that
\begin{gather*}
\| \partial_x^2(|u|^2)\|_{L^2}+\| |\partial_xu|^2\|_{L^2}+\| \Td\partial_x(\Im[\overline{u}\partial_xu])\|_{L^2}\lesssim 1+\| \partial_x^2u\|_{L^2},\\
\| \Im[\overline{u}\partial_xu]\|_{L^2}+\| \Td\partial_x(|u|^2)\|_{L^2}\lesssim 1,
\intertext{and then, by Young's inequality, that}
\Big| \mathcal{H}_2(u)-\int |\partial_x^2u|^2\Big| \leq \frac12\int |\partial_x^2u|^2+C_0,
\end{gather*}
where $C_0>0$ and other implicit constants may depend on $\alpha,\beta,\delta,\rho, \mathcal{E}^1(\phi)$. 
In particular, we have
\begin{equation}\label{est:equivI2}
\int |\partial_x^2u|^2\leq 2\mathcal{H}_2(u)+2C_0,\qquad \mathcal{H}_2(u)\leq \frac32\int|\partial_x^2u|^2+C_0.
\end{equation}
The desired a priori bound \eqref{apriori:p2} follows from \eqref{est:I2}, \eqref{apriori:e1}, \eqref{est:equivI2} and Gronwall's inequality.

This is the end of the proof of Theorem~\ref{thm:conservation}.
\end{proof}

\medskip
At the end of this section, we give an outline of the proof of Corollary~\ref{cor:conservation}.

\begin{proof}[Outline of proof of Corollary~\ref{cor:conservation}]
 In the integrable case of \eqref{gINLS} where $\beta =\pm |\alpha|$, we can further repeat the energy correction process (as in the proof of Theorem~\ref{thm:conservation} (ii)) to obtain an exactly conserved quantity $\mathcal{H}^{\mathrm{INLS}}_2(u)$, by introducing the following new correctors of order six with two spatial differentials
\begin{gather*}
I_2^{(6)}(u):=\int |u|^4|u_x|^2 ,\qquad
I_2^{(7)}(u):=\int |u|^2\big[(|u|^2)_x\big]^2 ,\\
I_2^{(8)}(u):=\int |u|^2\Im[\overline{u}u_x]\Td\partial_x(|u|^2-\rho^2) ,\qquad
I_2^{(9)}(u):=\int \Im[\overline{u}u_x]\Td\partial_x(|u|^4-\rho^4) ,\\
I_2^{(10)}(u):=\int |u|^2\big[ \Td \partial_x(|u|^2-\rho^2)\big]^2,
\end{gather*}
the correctors of order eight with one differential
\begin{gather*}
I_2^{(11)}(u):=\int (|u|^6-\rho^6)\Im [\overline{u}u_x],\\
I_2^{(12)}(u):=\int (|u|^6-\rho^6)\Td\partial_x(|u|^2-\rho^2),\qquad
I_2^{(13)}(u):=\int (|u|^4-\rho^4)\Td\partial_x(|u|^4-\rho^4),\end{gather*}
and finally, the corrector of order ten with no differential
\begin{align*}
I_2^{(14)}(u):=D_1\int (|u|^2-\rho^2)^5 +D_2\rho^2\int (|u|^2-\rho^2)^4+D_3\rho^4\int (|u|^2-\rho^2)^3+D_4\rho^6\int (|u|^2-\rho^2)^2\\
(D_1,\dots ,D_4\in \R), 
\end{align*}
and successively determining the coefficients to cancel the remainder in \eqref{remainder-conservation laws}.

Second, to prove \eqref{GWP-inequality}, we recall that in the proof of \eqref{apriori:p2} in Theorem~\ref{thm:conservation}, we have already seen that 
\[ \sum_{j=1}^5 |I_2^{(j)}(u)|\lesssim 1+\| \partial_x^2u\|_{L^2}. \]
Similarly, using H\"older's inequality and \eqref{est:Tdelta}, we see that
\[ \sum_{j=6}^{14}|I_2^{(j)}(u)|\lesssim 1.\]
Here, the implicit constants depend on $\alpha,\delta,\rho$ and $\mathcal{E}^1(u)$.
As we did for \eqref{est:equivI2}, by Young's inequality we deduce
\[ \Big| \mathcal{H}_2^{\mathrm{INLS}}(u(t))-\int |\partial_x^2u(t)|^2\Big| \leq \frac12\int |\partial_x^2u(t)|^2 +C_4\big(\alpha,\delta,\rho,\mathcal{E}^1(u(t))\big)\qquad (t\in [0,T]) \]
for some constant $C_4>0$.
Then, from the a priori bound \eqref{apriori:e1} for $\mathcal{E}^1(u(t))$ proven in Theorem~\ref{thm:conservation} (here we use the defocusing assumption $\beta =|\alpha|>0$) and conservation of $\mathcal{H}_2^{\mathrm{INLS}}(u(t))$, we infer that
\begin{align*}
\| \partial_x^2u(t)\|_{L^2}^2&\leq 2\mathcal{H}_2^{\mathrm{INLS}}(u(t))+2C_4\big(\alpha,\delta,\rho,\mathcal{E}^1(u(t))\big) \\
&\leq 2\mathcal{H}_2^{\mathrm{INLS}}(\phi)+2C_4\big(\alpha,\delta,\rho,C_1(\alpha,\delta,\rho,\mathcal{E}^1(\phi)\big) \\
&\leq 3\|\partial_x^2\phi\|_{L^2}^2+2C_4\big(\alpha,\delta,\rho,\mathcal{E}^1(\phi)\big)+2C_4\big(\alpha,\delta,\rho,C_1(\alpha,\delta,\rho,\mathcal{E}^1(\phi))\big),
\end{align*}
where $C_1>0$ is the constant in \eqref{apriori:e1}.
This completes the proof.
\end{proof}

\appendix 

\section{Fourier transform of the hyperbolic cotangent function}
\label{app:Fourier}

We give a proof of \eqref{T[delta]-Fourier}. 
To this end, we will use the following lemma. 
\begin{lemma}\label{25/7/19/11:17}
We have
\begin{equation}\label{25/7/19/11:18}
\mathcal{F}\bigg[ \frac{1}{e^{x}+1} -\frac{1}{e^{-x}+1}\bigg](\xi) = 2i \pi \,\pv\operatorname{cosech}{(\pi \xi)} ,
\end{equation}
where $\pv$ denotes the principal value.
\end{lemma}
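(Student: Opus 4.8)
The plan is to observe that the function whose transform appears in \eqref{25/7/19/11:18} is elementary and, because it fails to be integrable, to reduce the computation to a convergent residue calculation by passing to its derivative, recovering the transform afterwards by a distributional division argument in which the principal value appears naturally.

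First I would simplify the integrand. Using $\frac{1}{e^{-x}+1}=1-\frac{1}{e^{x}+1}$ one finds
\[ \frac{1}{e^{x}+1}-\frac{1}{e^{-x}+1}=\frac{2}{e^{x}+1}-1=-\tanh(x/2)=:f(x), \]
so that $f$ is a bounded, smooth, real and odd function. In particular $f\in \mathcal{S}'(\R)$ and $\mathcal{F}[f]$ is a well-defined tempered distribution, but since $f\notin L^{1}(\R)$ it cannot be computed by a direct integral. I would therefore work with the derivative $f'(x)=-\tfrac12\operatorname{sech}^{2}(x/2)$, which lies in $\mathcal{S}(\R)$ and thus has a genuine, rapidly decaying Fourier transform.

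Next I would compute $\mathcal{F}[\operatorname{sech}^{2}(x/2)]$ by contour integration, exploiting the $2\pi i$-periodicity of $\operatorname{sech}^{2}(z/2)$. Integrating $e^{-iz\xi}\operatorname{sech}^{2}(z/2)$ around the rectangle with vertices $\pm R$ and $\pm R+2\pi i$, the vertical sides vanish as $R\to\infty$ by exponential decay, while the top side reproduces $-e^{2\pi\xi}\mathcal{F}[\operatorname{sech}^{2}(x/2)](\xi)$. The only enclosed singularity is the double pole at $z=i\pi$; a short Laurent expansion, $\operatorname{sech}^{2}(z/2)=-4(z-i\pi)^{-2}+\tfrac13+\cdots$, yields residue $4i\xi e^{\pi\xi}$. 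Equating $(1-e^{2\pi\xi})\mathcal{F}[\operatorname{sech}^{2}(x/2)](\xi)$ with $2\pi i\cdot 4i\xi e^{\pi\xi}$ gives
\[ \mathcal{F}[\operatorname{sech}^{2}(x/2)](\xi)=\frac{4\pi\xi}{\sinh(\pi\xi)},\qquad\text{hence}\qquad \mathcal{F}[f'](\xi)=-\frac{2\pi\xi}{\sinh(\pi\xi)}. \]

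Finally I would recover $\mathcal{F}[f]$ from the relation $\mathcal{F}[f'](\xi)=i\xi\,\mathcal{F}[f](\xi)$, which holds in $\mathcal{S}'(\R)$. This yields the distributional identity $\xi\,\mathcal{F}[f](\xi)=2\pi i\,\xi/\sinh(\pi\xi)$, whose right-hand side is the smooth function $2\pi i\,\xi\operatorname{cosech}(\pi\xi)$. Since multiplying $\pv\operatorname{cosech}(\pi\xi)$ by $\xi$ produces exactly the same smooth function, the difference $\mathcal{F}[f]-2\pi i\,\pv\operatorname{cosech}(\pi\xi)$ is annihilated by multiplication by $\xi$, hence equals $c\,\delta_{0}$ for some constant $c$. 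I would pin down $c=0$ by parity: $f$ is real and odd, so $\mathcal{F}[f]$ is odd, and $\pv\operatorname{cosech}(\pi\xi)$ is odd as well, whereas $\delta_{0}$ is even; therefore $c=0$ and \eqref{25/7/19/11:18} follows. I expect the genuinely delicate point to be this last step rather than the residue computation: the non-integrability of $f$ forces a purely distributional interpretation, and the principal value is precisely the ambiguity left after dividing by $\xi$. The care required is in identifying $\xi\cdot\pv\operatorname{cosech}(\pi\xi)$ with the honest smooth function $\xi/\sinh(\pi\xi)$ and in excluding the spurious Dirac term, which the parity argument settles cleanly.
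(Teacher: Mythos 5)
Your proof is correct, but it takes a genuinely different route from the paper's. Both arguments reduce to the function $-\tanh(x/2)$ and both run a residue computation over the rectangle $\{ z\in\C : -R\leq \Re[z]\leq R,\ 0\leq \Im[z]\leq 2\pi\}$, but you handle the failure of integrability differently. The paper regularizes: it computes $\mathcal{F}\big[e^{\varepsilon x}/(e^{x}+1)\big](\xi)$ exactly for small $\varepsilon>0$ (simple pole at $i\pi$) and then passes to the limit $\varepsilon \downarrow 0$ in the sense of tempered distributions, where the principal value emerges through a genuinely delicate limiting argument --- splitting the pairing at $|\xi|=r$, exploiting oddness of the regularized symbol to subtract $\phi(0)$, uniform-in-$\varepsilon$ bounds, and dominated convergence. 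You instead differentiate: $f'=-\tfrac12\operatorname{sech}^{2}(x/2)$ is Schwartz, its transform $-2\pi\xi/\sinh(\pi\xi)$ follows from a clean residue computation (now at a double pole; your Laurent expansion and the residue $4i\xi e^{\pi\xi}$ are correct under the paper's convention $\widehat{f}(\xi)=\int e^{-ix\xi}f\,dx$), and you then divide by $i\xi$ in $\mathcal{S}'(\R)$, where $\pv\operatorname{cosech}(\pi\xi)$ arises as a solution of $\xi T = 2\pi i\,\xi\operatorname{cosech}(\pi\xi)$ and the residual ambiguity $c\,\delta_{0}$ is excluded by parity, since the difference of two odd distributions cannot be a nonzero multiple of the even distribution $\delta_{0}$. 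Your route buys a cleaner distributional argument --- no uniform estimates or interchange of limits --- at the price of a double-pole residue and the standard structure theorem that $\xi T=0$ forces $T=c\,\delta_{0}$; the paper's route is more hands-on, exhibiting $\pv\operatorname{cosech}(\pi\xi)$ concretely as a limit of honest locally integrable functions, which makes the appearance of the principal value explicit but requires the more careful analysis.
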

\begin{proof}[Proof of Lemma \ref{25/7/19/11:17}]
Let $0<\varepsilon <1/2$ be a constant to be taken $\varepsilon \to 0$.  
We consider the perimeter of the rectangle $\{ z \in \mathbb{C} \colon -R \leq \Re[z] \leq R,~0\leq \Im[z]\leq 2\pi \}$. Then, an elementary complex analysis involving the residue theorem shows that 
\[ \begin{aligned} 
\mathcal{F}\left[\frac{e^{\varepsilon x}}{e^{x}+1}\right](\xi)
&= \int_{-\infty}^{\infty} \frac{e^{(\varepsilon -i\xi) x}}{e^{x}+1} \,dx \\
&=2 i \pi\,\mathrm{Res} \left(\frac{e^{(\varepsilon -i\xi)z}}{e^{z}+1}; i\pi \right)+\lim_{R\to \infty}\int_{-R}^{R} \frac{e^{(\varepsilon -i\xi) (x+2 i \pi )}}{e^{x+2i \pi}+1} \,dx \\
&\quad -\lim_{R\to \infty}\int_{0}^{2\pi} \frac{e^{(\varepsilon -i\xi) (R+iy)}}{e^{R+iy}+1} i \,dy+\lim_{R\to \infty}\int_{0}^{2\pi} \frac{e^{(\varepsilon -i\xi) (-R+iy)}}{e^{-R+iy}+1} i\,dy\\
&=-2i \pi e^{\pi(\xi + i\varepsilon)}+e^{2\pi(\xi + i\varepsilon)}\int_{-\infty}^{\infty} \frac{e^{(\varepsilon -i\xi) x}}{e^{x}+1} \,dx \\
&=-2i \pi e^{\pi(\xi + i\varepsilon)}+ e^{2\pi(\xi + i\varepsilon)}\mathcal{F}\left[ \frac{e^{\varepsilon x}}{e^{x}+1}\right](\xi),
\end{aligned} \]
which implies that  
\begin{equation}\label{25/7/19/10:53}
\mathcal{F}\left[\frac{e^{\varepsilon x}}{e^{x}+1}\right](\xi)=\frac{-2i \pi e^{\pi(\xi + i\varepsilon)}}{1-e^{2\pi(\xi + i\varepsilon)}}=\frac{2i \pi }{e^{\pi(\xi + i\varepsilon)}-e^{-\pi(\xi + i\varepsilon)}}.
\end{equation}
Moreover, by $\mathcal{F}[f(-x)](\xi) = \mathcal{F}[f](-\xi)$, we see that 
\begin{equation}\label{25/7/19/10:7}
\mathcal{F}\left[\frac{e^{-\varepsilon x}}{e^{-x}+1}\right](\xi)=
\mathcal{F}\left[\frac{e^{\varepsilon x}}{e^{x}+1}\right](-\xi)=
\frac{2i \pi }{e^{-\pi(\xi - i\varepsilon)}-e^{\pi(\xi - i\varepsilon)}}.
\end{equation}
By \eqref{25/7/19/10:53}, \eqref{25/7/19/10:7}, and  
\[ \lim_{\varepsilon \downarrow 0}\left( \frac{e^{\varepsilon x}}{e^{x}+1}-\frac{e^{-\varepsilon x}}{e^{-x}+1}\right) =\frac{1}{e^{x}+1} -\frac{1}{e^{-x}+1}\qquad \mbox{for all $x\in \mathbb{R}$}, \]
it suffices for \eqref{25/7/19/11:18} to show that 
\begin{equation}\label{25/7/19/11:25}
\begin{aligned}
F_{\varepsilon}(\xi)
&:= \frac{1}{2i \pi}\mathcal{F}\left[\frac{e^{\varepsilon x}}{e^{x}+1}- \frac{e^{-\varepsilon x}}{e^{-x}+1}\right](\xi) \\
&\;=\frac{1}{e^{i \pi \varepsilon} e^{\pi \xi }-e^{-i \pi \varepsilon}e^{-\pi \xi}}-\frac{1}{e^{i\pi \varepsilon}e^{-\pi \xi}-e^{-i\pi\varepsilon }e^{\pi \xi}} \\
&\;\to \pv \operatorname{cosech}{(\pi \xi)}\quad \mbox{in the tempered distribution sense, as $\varepsilon \downarrow 0$}. 
\end{aligned}
\end{equation}
Let $\phi$ be a Schwartz function on $\mathbb{R}$, and $0< r \leq 1$ a constant to be taken $r\to 0$. 
Since $F_{\varepsilon}$ is an odd function, we see that 
\begin{equation}\label{25/7/19/11:39}
\int_{\mathbb{R}} F_{\varepsilon}(\xi) \phi(\xi) \,d\xi 
=\int_{|\xi|>r} F_{\varepsilon}(\xi) \phi(\xi) \,d\xi +\int_{-r}^{r} F_{\varepsilon}(\xi) \left\{ \phi(\xi)-\phi(0)\right\} \,d\xi .
\end{equation}
Note that $F_{\varepsilon}(\xi)$ converges to $\operatorname{cosech}{(\pi \xi)}$ for all $x\in \mathbb{R}\setminus \{ 0\}$ as $\varepsilon \downarrow 0$. 
Moreover, we see that 
\begin{equation}\label{25/7/19/11:48}
\begin{aligned}
\left| F_{\varepsilon}(\xi) \right|
&=\left| \frac{1}{e^{i \pi \varepsilon} e^{\pi \xi }-e^{-i \pi \varepsilon}e^{-\pi \xi}}-\frac{1}{e^{i\pi \varepsilon}e^{-\pi \xi}-e^{- i\pi \varepsilon}e^{\pi \xi}}\right| \\
&=\left| \frac{(e^{i\pi \varepsilon} + e^{-i \pi \varepsilon}) (e^{-\pi \xi} - e^{\pi \xi})}{e^{2i \pi \varepsilon} +e^{-2i \pi \varepsilon} -( e^{2\pi \xi } + e^{-2\pi \xi})}\right| =\left| \frac{2 \cos{(\pi \varepsilon)}(e^{-\pi \xi} - e^{\pi \xi})}{2 \cos{(2 \pi \varepsilon)} - (e^{2\pi \xi } + e^{-2\pi \xi})}\right| \\
&\leq \frac{2 |e^{-\pi \xi} - e^{\pi \xi}|}{e^{2\pi \xi } + e^{-2\pi \xi} - 2}=\frac{2 |e^{-\pi \xi} - e^{\pi \xi}|}{|e^{\pi \xi } - e^{-\pi \xi}|^{2} 
}=\frac{2 }{|e^{\pi \xi } - e^{-\pi \xi}|}.  
\end{aligned} 
\end{equation}
By \eqref{25/7/19/11:48}, we see that $F_{\varepsilon}$ is uniformly bounded with respect to $\varepsilon$ for $|\xi|>r$; for instance, we have 
\[ \left| F_{\varepsilon}(\xi) \right| \leq \frac{2}{e^{\pi r } - e^{-\pi r}}. \]
Thus, Lebesgue's dominated convergence theorem shows that 
\begin{equation}\label{25/7/19/12:37}
\lim_{\varepsilon \downarrow 0}\int_{|\xi|>r} F_{\varepsilon}(\xi) \phi(\xi) \,d\xi 
=\int_{|\xi|>r}\operatorname{cosech}{(\pi \xi)}\phi(\xi)\,d\xi. 
\end{equation}
Furthermore, by \eqref{25/7/19/11:48}, we see that
\begin{equation}\label{25/7/20/10:29}
\lim_{\varepsilon \downarrow 0}\left| \int_{-r}^{r} F_{\varepsilon}(\xi) \left\{ \phi(\xi) -\phi(0) \right\} d\xi \right| 
\leq \int_{-r}^{r} \frac{2e^{\pi \xi} |\xi|}{|e^{2\pi \xi} - 1|}\left| \frac{\phi(\xi) -\phi(0)}{\xi} \right| d\xi 
\lesssim |r| \| \partial_\xi \phi \|_{L^{\infty}},
\end{equation}
where the implicit constant is independent of $\varepsilon$ and $r$. 
Putting \eqref{25/7/19/11:39}, \eqref{25/7/19/12:37} and \eqref{25/7/20/10:29} together, we see that 
\begin{equation}\label{25/7/19/12:55}
\begin{aligned}
\lim_{\varepsilon \downarrow 0}\int_{\mathbb{R}} F_{\varepsilon}(\xi) \phi(\xi) \,d\xi 
&=\lim_{r\downarrow 0} \lim_{\varepsilon \downarrow 0}\bigg\{ 
\int_{|\xi|>r} F_{\varepsilon}(\xi) \phi(\xi) \,d\xi +
\int_{-r}^{r} F_{\varepsilon}(\xi) \left\{ \phi(\xi) -\phi(0) \right\} d\xi \bigg\} \\
&=\lim_{r\downarrow 0}\int_{|\xi|>r}\operatorname{cosech}{(\pi \xi)}\phi(\xi)\,d\xi=\langle \pv\operatorname{cosech}{(\pi \xi)}, \phi \rangle.
\end{aligned} 
\end{equation}
Thus, we have proven \eqref{25/7/19/11:25} and the lemma. 
\end{proof}

Now, we are in a position to prove \eqref{T[delta]-Fourier}. 
\begin{proof}[Proof of \eqref{T[delta]-Fourier}]
It is sufficient to prove that 
\begin{equation}\label{25/7/12/7:1}
\mathcal{F}[\pv\coth{(\cdot)}](\xi) = -i \pi \, \pv\coth{\Big(\frac{\pi \xi}{2}\Big)} .
\end{equation}
Indeed, \eqref{25/7/12/7:1} together with basic properties of Fourier transformations show that for any Schwartz function $f$ on $\mathbb{R}$, 
\begin{align*}
\mathcal{F}[\Td f](\xi)
&= \frac{1}{2\delta} \mathcal{F}\left[ \pv \coth{\left( \frac{\pi \cdot}{2\delta}\right)}* f\right](\xi)=\frac{1}{2\delta} \mathcal{F}\left[\pv\coth{\left( \frac{\pi \cdot}{2\delta}\right)}\right](\xi)
\mathcal{F}[f](\xi) \\
&=\frac{1}{\pi} \mathcal{F}\left[\pv \coth{(\cdot)}\right]\Big(\frac{2\delta \xi}{\pi}\Big)\mathcal{F}[f](\xi)=-i\, \pv \coth{\left(\delta \xi \right)}\mathcal{F}[f](\xi).
\end{align*}

Note that 
\[ \frac{1}{e^{x}+1} - \frac{1}{e^{-x}+1}=\frac{e^{-\frac{x}{2}}-e^{\frac{x}{2}}}{e^{\frac{x}{2}}+e^{-\frac{x}{2}}}=-\tanh{\left( \frac{x}{2}\right)}.\]
Hence, by Lemma \ref{25/7/19/11:17}, we see that 
\begin{equation}\label{25/7/20/10:52}
\mathcal{F}\left[ \tanh{\left(\frac{x}{2}\right)} \right]\left(\xi \right)
=-\mathcal{F}\left[ \frac{1}{e^{x}+1} - \frac{1}{e^{-x}+1}\right]\left( \xi\right)=-2i \pi\, \pv\operatorname{cosech}{\left( \pi \xi \right)}.
\end{equation}
Furthermore, by \eqref{25/7/20/10:52}, and $\mathcal{F}^{2}[f](x) = 2\pi f(-x)$, we see that 
\begin{equation}\label{25/7/20/12:1}
\begin{aligned}
\mathcal{F}\left[ \pv\operatorname{cosech}{\left( x \right)}\right](\xi) 
&=\frac{i}{2}\mathcal{F}\left[ -2 i \pi\, \pv\operatorname{cosech}{\left(\pi x \right)}\right]\left(\pi \xi \right)\\
&= \frac{i}{2}\mathcal{F}^{2}\left[ \tanh{\left(\frac{x}{2}\right)} \right]\left(\pi \xi\right)=i\pi \tanh{\Big( \frac{-\pi \xi}{2}\Big)}=-i\pi \tanh{\Big( \frac{\pi \xi}{2}\Big)}.
\end{aligned} 
\end{equation}
Note that 
\begin{equation}\label{25/7/20/12:19}
\begin{aligned}
\coth{(x)} 
&=\frac{e^{x}+e^{-x}}{e^{x}-e^{-x}}=\frac{(e^{\frac{x}{2}} - e^{-\frac{x}{2}})^{2} + 2}{e^{x}-e^{-x}}=\frac{e^{\frac{x}{2}} - e^{-\frac{x}{2}}}{e^{\frac{x}{2}}+ e^{-\frac{x}{2}}}+\frac{2}{ e^{x}- e^{-x}}\\
&=\tanh{\left(\frac{x}{2} \right)} + \operatorname{cosech}{(x)}.
\end{aligned}
\end{equation}
Moreover, we see that 
\begin{equation}\label{25/7/20/13:3}
\operatorname{cosech}{\left(x \right)} + \coth{(x)}
=\frac{2+ e^{x} + e^{-x}}{e^{x}- e^{-x}}= \frac{(e^{\frac{x}{2}} + e^{-\frac{x}{2}})^{2}}{e^{x}-e^{-x}}=\coth{\left( \frac{x}{2}\right)}. 
\end{equation}
Using \eqref{25/7/20/12:19} twice, 
 \eqref{25/7/20/10:52}, \eqref{25/7/20/12:1}, and 
 \eqref{25/7/20/13:3}, we see that
\begin{equation}\label{25/7/20/12:55}
\begin{aligned} 
\mathcal{F}\left[\pv \coth{(x)}\right](\xi) 
&=\mathcal{F}\left[ \tanh{\left(\frac{x}{2} \right)}\right](\xi) + \mathcal{F}\left[\pv\operatorname{cosech}{(x)}\right](\xi)\\
&=-2i \pi\, \pv\operatorname{cosech}{\left( \pi \xi \right)}-i\pi \tanh{\Big( \frac{\pi \xi}{2}\Big)}\\
&=-i \pi\, \pv\operatorname{cosech}{\left( \pi \xi \right)}-i\pi\,\pv \coth{(\pi \xi)}\\
&=-i\pi\,\pv \coth{\Big( \frac{\pi \xi}{2} \Big)}.
\end{aligned}
\end{equation}
Thus, we have proven \eqref{25/7/12/7:1}; thus \eqref{T[delta]-Fourier} is true. 
\end{proof}


 \section{Proof of Lemma~\ref{lem:fe} and \eqref{est:fe}}
\label{sec:app-fe}

\begin{proof}[Proof of Lemma~\ref{lem:fe}]
(i) The first inequality follows immediately from the definition of $c_{\delta,j}[f]$. 
We also see that $c_{\delta,j\pm 1}[f]\leq 2^{\eps}c_{\delta,j}[f]$, which implies the second inequality.
For the last one, we easily see $\mathcal{E}^2_\delta(f)\lesssim \| \mathbf{c}_\delta[f]\|_{\ell^2}^2$, while the converse is shown by the (discrete) Young inequality.

(ii) Using the inequality $\big| \| \phi \|_{L^2}^{1/2} -\| \psi \|_{L^2}^{1/2}\big|\leq \| \phi -\psi \|_{L^2}^{1/2}$, we have 
\[ |c_{\delta,j}[f] - c_{\delta,j}[g]| \leq \sum_{k=0}^{\infty} 2^{-\eps |j-k|}
\big\| P_{k}\partial_x(f-g)\big\|_{H^1}+2^{-\eps j}\Big( \| f-g\|_{L^\infty} +\delta^{-\frac23}\big\| (|f|^2-\rho_f^2)-(|g|^2-\rho_g^2)\big\|_{L^2}^{\frac12}\Big) .\]
The claim follows by Young's inequality as before. 

(iii) By Bernstein's inequality, (i), and that $\eps \in (0,1)$, we see that 
\[ \| P_{\leq j} \partial_x f \|_{H^2} 
\lesssim \sum_{k=0}^{j} 2^{k} 
\| P_{k} \partial_{x} f \|_{H^1} \leq \sum_{k=0}^{j} 2^{k} 2^{\eps (j-k)} 
c_{\delta,j}[f] \lesssim 2^{j} c_{\delta,j}[f]. \]
On the other hand, we see that $\| P_{\leq j}f\|_{L^\infty}\leq \| f\|_{L^\infty}\leq 2^{\eps j}c_{\delta,j}[f]\leq 2^jc_{\delta,j}[f]$, and the claim follows from these estimates. 
Thus, we have completed the proof of Lemma~\ref{lem:fe}.
\end{proof}

\begin{proof}[Proof of \eqref{est:fe}]
By Bernstein's inequality, the $X^3$ bound from \eqref{bd:Ek}, and Lemma~\ref{lem:fe} (iii), we see that 
\begin{align*}
\| P_k\partial_x^2(u^j-u^{j-1})\|_{L^\infty_{T_*}L^2}&\lesssim 2^{-k}\big( \| \partial_xu^j\|_{L^\infty_{T_*}H^2}+ \| \partial_xu^{j-1}\|_{L^\infty_{T_*}H^2}\big) \lesssim 2^{-k}\big( \| P_{\leq j}\phi\|_{X^3}+\| P_{\leq j-1}\phi\|_{X^3}\big)  \\
&\lesssim 2^{-k+j}c_{\delta,j}[\phi] .
\end{align*}
This gives \eqref{est:fe} if $j\leq k$.
When $k<j$, we use the difference bound \eqref{bd:d1} instead, to obtain
\begin{align*}
\| P_k\partial_x^2(u^j-u^{j-1})\|_{L^\infty_TL^2}&\lesssim 2^k\| \partial_x(u^j-u^{j-1})\|_{L^\infty_TL^2}\lesssim_{M_0} 2^k d^1_\delta (P_{\leq j}\phi,P_{\leq j-1}\phi) \\
&\lesssim 2^k\big( \| P_j\partial_x\phi\|_{L^2}+\| P_j\phi\|_{L^\infty}+\delta^{-\frac43}\| \phi\|_{L^\infty}\| P_j\phi\|_{L^2}\big)  \\
&\lesssim 2^k\big( 2^{-j}+2^{-\frac32j}+M_02^{-2j}\big) \| P_j\partial_x\phi\|_{H^1} \lesssim_{M_0} 2^{k-j}c_{\delta,j}[\phi],
\end{align*}
as desired.
\end{proof}


\bibliographystyle{abbrv}
\bibliography{bib_INLS}

\end{document}